\newcommand{\R}{{\mathbb R}}
\newcommand{\oR}{{\mathbb{R}^{n}}}
\newcommand{\thess}{\mathring{\nabla}^2}
\DeclareSymbolFont{txlargeoperators}{OMX}{txex}{m}{n}
\DeclareSymbolFont{txlargeoperatorsA}{U}{txexa}{m}{n}
\DeclareMathSymbol{\intop}{\mathop}{txlargeoperators}{"52}
\DeclareMathSymbol{\iintop}{\mathop}{txlargeoperatorsA}{33}
\DeclareMathSymbol{\iiintop}{\mathop}{txlargeoperatorsA}{35}
\DeclareMathSymbol{\iiiintop}{\mathop}{txlargeoperatorsA}{37}
\DeclareMathSymbol{\idotsintop}{\mathop}{txlargeoperatorsA}{39}
\DeclareMathSymbol{\ointop}{\mathop}{txlargeoperators}{"48}
\DeclareMathSymbol{\oiintop}{\mathop}{txlargeoperatorsA}{8}
\DeclareMathSymbol{\oiiintop}{\mathop}{txlargeoperatorsA}{41}
\DeclareMathSymbol{\varointctrclockwiseop}{\mathop}{txlargeoperatorsA}{43}
\DeclareMathSymbol{\varointclockwiseop}{\mathop}{txlargeoperatorsA}{45}
\DeclareMathSymbol{\oiintctrclockwiseop}{\mathop}{txlargeoperatorsA}{64}
\DeclareMathSymbol{\varoiintclockwiseop}{\mathop}{txlargeoperatorsA}{66}
\DeclareMathSymbol{\oiintclockwiseop}{\mathop}{txlargeoperatorsA}{72}
\DeclareMathSymbol{\varoiintctrclockwiseop}{\mathop}{txlargeoperatorsA}{74}
\DeclareMathSymbol{\oiiintctrclockwiseop}{\mathop}{txlargeoperatorsA}{68}
\DeclareMathSymbol{\varoiiintclockwiseop}{\mathop}{txlargeoperatorsA}{70}
\DeclareMathSymbol{\oiiintclockwiseop}{\mathop}{txlargeoperatorsA}{76}
\DeclareMathSymbol{\varoiiintctrclockwiseop}{\mathop}{txlargeoperatorsA}{78}
\DeclareMathSymbol{\fintop}{\mathop}{txlargeoperatorsA}{62}
\DeclareMathSymbol{\sqiintop}{\mathop}{txlargeoperatorsA}{80}
\DeclareMathSymbol{\sqiiintop}{\mathop}{txlargeoperatorsA}{82}
\renewcommand{\int}{\DOTSI\intop\ilimits@}
\renewcommand{\iint}{\DOTSI\iintop\ilimits@}
\renewcommand{\iiint}{\DOTSI\iiintop\ilimits@}
\renewcommand{\iiiint}{\DOTSI\iiiintop\ilimits@}
\renewcommand{\idotsint}{\DOTSI\idotsintop\ilimits@}
\renewcommand{\oint}{\DOTSI\ointop\ilimits@}
\newcommand{\oiint}{\DOTSI\oiintop\ilimits@}
\newcommand{\oiiint}{\DOTSI\oiiintop\ilimits@}
\newcommand{\varointctrclockwise}{\DOTSI\varointctrclockwiseop\ilimits@}
\newcommand{\varointclockwise}{\DOTSI\varointclockwiseop\ilimits@}
\newcommand{\oiintctrclockwise}{\DOTSI\oiintctrclockwiseop\ilimits@}
\newcommand{\varoiintclockwise}{\DOTSI\varoiintclockwiseop\ilimits@}
\newcommand{\oiintclockwise}{\DOTSI\oiintclockwiseop\ilimits@}
\newcommand{\varoiintctrclockwise}{\DOTSI\varoiintctrclockwiseop\ilimits@}
\newcommand{\oiiintctrclockwise}{\DOTSI\oiiintctrclockwiseop\ilimits@}
\newcommand{\varoiiintclockwise}{\DOTSI\varoiiintclockwiseop\ilimits@}
\newcommand{\oiiintclockwise}{\DOTSI\oiiintclockwiseop\ilimits@}
\newcommand{\varoiiintctrclockwise}{\DOTSI\varoiiintctrclockwiseop\ilimits@}
\newcommand{\fint}{\DOTSI\fintop\ilimits@}
\newcommand{\sqiint}{\DOTSI\sqiintop\ilimits@}
\newcommand{\sqiiint}{\DOTSI\sqiiintop\ilimits@}
\theoremstyle{plain}
\newtheorem{theorem}{Theorem}[section]
\newtheorem{remark}[theorem]{Remark}
\newtheorem{corollary}[theorem]{Corollary}
\newtheorem{claim}[theorem]{Claim}
\newtheorem{lemma}[theorem]{Lemma}
\newtheorem{proposition}[theorem]{Proposition}
\numberwithin{equation}{section}
\title[Rigidity of Poincar\'e-Einstein manifolds]{Rigidity of Poincar\'e-Einstein manifolds with flat Euclidean conformal infinity}
\author{Sanghoon Lee}
\address{Korea Institute for Advanced Study, Hoegiro 85, Seoul 02455, Korea}
\email{sl29@kias.re.kr}
\author{Fang Wang}
\address{Shanghai Jiao Tong University, 800 Dongchuan Rd, Shanghai 200240, China}
\email{fangwang1984@sjtu.edu.cn}
\begin{document}

\date{\today}

\subjclass{	53C25, 53C24    } \keywords{}


\maketitle

\begin{abstract}

In this paper, we prove a rigidity theorem for Poincar\'e-Einstein manifolds whose conformal infinity is a flat Euclidean space. The proof relies on analyzing the propagation of curvature tensors over the level sets of an adapted boundary defining function. Additionally, we provide examples of Poincar\'e-Einstein manifolds with non-compact conformal infinities. Furthermore, we draw analogies with Ricci-flat manifolds exhibiting Euclidean volume growth, particularly when the compactified metric has non-negative scalar curvature.

\smallskip
\noindent \textbf{Keywords.} Rigidity results, Poincar\'e-Einstein manifolds
\end{abstract}

\section{Introduction}

\subsection{Background and main theorems}
Suppose $\overline{X}^{n+1}$ is an $n+1$-dimensional smooth manifold with boundary $\partial X=M$ and interior $X$. 
Let $x\in C^{\infty}(\overline{X})$ be a smooth \textit{boundary defining function}, i.e.
$$
x>0\ \textrm{in $X$}, 
\quad x=0 \ \textrm{on $M$}, 
\quad dx \neq 0 \ \textrm{on $M$}. 
$$
Let $g_+$ be a complete Riemannian metric on $X$.
We say that $g_+$  is a \textit{Poincar\'{e}-Einstein metric} (PE) of  $C^{k,\alpha}$(or $C^{\infty}$) regularity   if it satisfies the Einstein equation
\begin{equation}
	Ric_{g_+}=-ng_+, \quad \textrm{in $X$}
\end{equation}
and if the conformally compactified metric $\bar{g}=x^2g_+$  extends to a $C^{k,\alpha}$(or $C^{\infty}$) metric on $\overline{X}$. Defining $h=\bar{g}|_{M}$,the pair $(M, [h ])$ is called the conformal infinity of $(X, M, g_+). $
Typically, we assume $k\geq 2, 0\leq \alpha<1$, which ensures that the sectional curvature satisfies
$$
K\rightarrow -|dx|^2_{\bar{g}}=-1, \ \textrm{as $x\rightarrow 0$}. 
$$

When  $\overline{X}$ and $M$ are both compact, this class of metrics has been extensively studied in many classical works. Examples include boundary regularity theory \cite{CDLS, An1}, existence theory \cite{An2, GL, Le2, BL, GS}, non-existence theory \cite{GH, GHS}, uniqueness theory \cite{ST, DJ, LQS, CLW1, CGJQ}, and compactness theory \cite{CG1, CGJQ,  CGQ}. For further discussions on the geometric aspects and the AdS/CFT correspondence, see also \cite{An3, CQY, Qi}.

In this paper, we primarily focus on the case where $\partial X$ is non-compact. While the local theory is analogous to the compact boundary case, the global theory is far less developed. Notably, the standard hyperbolic space has two well-known models: the ball model and the upper half-plane model. In the ball model, the conformal infinity is the standard sphere, which is compact. In contrast, in the upper half-plane model, the natural conformal infinity is the  flat Euclidean space, which is non-compact. This distinction makes the study of non-compact boundaries a natural and important extension for generalizing the geometric and analytic theory developed for the standard upper half-plane model.

Beyond the fact that PE manifolds serve as natural generalizations of the upper half-plane model of hyperbolic space, their relationship to PE manifolds with compact boundaries can be viewed in two ways. One motivation for this study is that Poincaré-Einstein (PE) manifolds with non-compact boundaries frequently arise as blow-up limits of those with compact boundaries. Consequently, understanding the non-compact boundary case is critical for gaining insight into the geometry and moduli space of compact cases. For example, in works such as \cite{CG1, CGQ, CGJQ}, sophisticated blow-up arguments were used to control curvature near the boundary to establish compactness properties for Poincaré-Einstein manifolds.

Second, there is a strong connection between PE manifolds with non-compact boundaries and those with singular conformal infinities.  For examples of PE manifolds with singularities, we refer the reader to \cite{AOS, BL}. Specifically, by applying stereographic projection and performing a blow-up of the singular set on the boundary, one obtains PE manifolds with non-compact boundaries and a particular decay order of curvature. Note that the curvature decay in such cases is typically worse than that of manifolds derived from smooth Poincaré-Einstein manifolds with compact boundaries via stereographic projection. Thus, a deeper understanding of general Poincaré-Einstein manifolds with non-compact boundaries provides valuable insights into PE manifolds with compact boundaries that may include singular sets.

The goal of this paper is twofold. The first is to prove a rigidity theorem for Poincaré-Einstein (PE) manifolds with flat Euclidean space as their conformal infinity under certain natural geometric assumptions. Specifically, we show that if a smooth PE manifold with flat Euclidean conformal infinity admits an \emph{adapted boundary defining function} $\rho$--a suitable conformal factor that fixes the conformal gauge symmetry among boundary defining functions--and if the compactified metric $\rho^2g_+$ has non-negative scalar curvature and its full curvature tensor exhibits sufficiently fast quadratic decay, then the manifold must be the standard hyperbolic space.

The second goal is to provide a variety of examples of smooth Poincaré-Einstein manifolds with non-compact boundaries and specific curvature decay properties. These examples arise from the stereographic projection of Poincaré-Einstein manifolds with singular boundaries. Thus, we confirm that the class of PE manifolds with non-compact boundaries is rich and abundant.


Before presenting the main results of this paper, we briefly discuss the gauge freedom of boundary defining functions. In the definition of PE manifolds, the conformal infinity becomes observable by compactifying the metric, which is achieved by multiplying the original metric by a suitable boundary defining function. Of course, when the boundary is topologically non-compact, as in our case, the manifold does not become compact in the usual sense after this multiplication. However, we will continue to use the term compactification for convenience, as it effectively conveys the concept in this context, and we could not identify a better alternative.

Even after selecting a specific representative within the conformal equivalence class of metrics on the boundary, there are still numerous choices for the boundary defining function. To address this ambiguity, it is often necessary to identify a special boundary defining function. For compact boundaries, a commonly used option is the \emph{adapted boundary defining function}, as studied in the work of Case and Chang \cite{CC}, where they introduced a defining function that depends on a real parameter $\gamma$. This approach generalizes the defining functions used in the works of Lee \cite{Le1} and Fefferman and Graham \cite{FG}, which correspond to the specific cases $\gamma = \frac{n}{2}$ and $\gamma = \frac{n}{2} +1$, respectively. Roughly speaking, this defining function solves an elliptic PDE whose coefficients depend on $\gamma$ in the interior and exhibits suitable boundary behavior.

When the boundary is non-compact, we define the adapted boundary defining function in an analogous manner, and assume their existence in our results. We note that the existence of these defining functions is open in general, except in cases where a PE manifold is a limit of PE manifolds with compact boundaries and adapted defining functions. For a precise definition and basic properties of these functions, we refer the reader to Section 2.

Now, we present the main results of this paper. Our first result is a rigidity theorem for Poincaré-Einstein (PE) manifolds with flat Euclidean space $(\mathbb{R}^n, g_0)$ as their conformal infinity:

\begin{theorem} 
Let $n \ge 3$. There exists a positive number $C(n)$ with the following significance:

Suppose $(X^{n+1}, \mathbb{R}^n, g_+)$ is a complete $C^{3,\alpha}$(where $0<\alpha<1$) Poincar\'{e}-Einstein manifold with conformal infinity $(\oR, g_{0})$. Assume there exists the Fefferman-Graham compactification $\rho \in C^3(\overline{X})$ and let $g = \rho^2 g_+ $ be a $C^3$ metric on $\overline{X}$.
Further, assume the following conditions hold:

(1) $R_g\ge 0$

(2) $|Rm_g|(x) \le \frac{C(n)}{\mathrm{dist}(x,o)^2}$ for all $x\in \overline{X}$, where $o$ is a fixed point on the boundary.

Then,  $(\overline{X}, \mathbb{R}^n, \rho^2 g_+)$ is isometric to the standard Euclidean upper half-plane, or equivalently $(X^{n+1}, \mathbb{R}^n,  g_+)$ is isometric to the standard upper-half plane model of hyperbolic space.
\end{theorem}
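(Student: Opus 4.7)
The plan is to foliate $\overline{X}$ by level sets of the Fefferman--Graham defining function $\rho$ and to propagate the flatness of the conformal infinity $(\oR, g_0)$ inward along this foliation, ultimately forcing the compactified metric $g = \rho^2 g_+$ to be flat Euclidean on the half-space. To set this up, I would put $g$ into Fefferman--Graham normal form: since $|d\rho|_g = 1$ in a collar of $\partial X$, the flow of $\nabla_g \rho$ provides coordinates in which $g = d\rho^2 + h_\rho$ with $h_0 = g_0$. The Einstein equation $\mathrm{Ric}_{g_+} = -n g_+$, rewritten via the conformal change $g = \rho^2 g_+$, then yields a system of $\rho$-evolution equations for $h_\rho$ and for its second fundamental form $k_\rho = \tfrac{1}{2}\partial_\rho h_\rho$; together with the Gauss--Codazzi--Mainardi identities, this expresses every component of $Rm_g$ in terms of intrinsic data on the leaves $\Sigma_\rho$ and produces a transport-type equation for a suitable norm of $Rm_g$ along the $\rho$-direction.

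The two hypotheses then enter jointly. The non-negativity $R_g \ge 0$, after tracing the evolution equations and invoking the Gauss identity, converts into a sign-definite differential inequality for a combination of $|k_\rho|^2$, $H_\rho = \mathrm{tr}_{h_\rho} k_\rho$, and the intrinsic scalar curvature $R_{h_\rho}$. The quadratic decay $|Rm_g|(x) \le C(n)/\mathrm{dist}(x,o)^2$ then supplies the pointwise and integral control needed to execute a maximum-principle or Bochner-type argument on the non-compact leaves $\Sigma_\rho \cong \oR$, where the usual compactness tricks fail. Because the initial datum $Rm_g|_{\Sigma_0}$ vanishes, inherited from the flatness of $h_0$, combining this initial condition with the propagation inequality and the decay forces $Rm_g$ to vanish identically on each $\Sigma_\rho$, and hence throughout a full collar of $\partial X$.

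Once $Rm_g \equiv 0$ on a collar, the real-analyticity of Einstein metrics (or equivalently a direct continuation argument using the globally defined $\rho$) extends the conclusion to all of $\overline{X}$, so $g$ is flat Euclidean. Then $g_+ = \rho^{-2} g$ is a complete, conformally flat Einstein metric with conformal infinity $(\oR, g_0)$, which identifies it uniquely with the standard upper half-plane model of hyperbolic space. The main obstacle will be the middle step: producing the correct differential inequality for $|Rm_g|$ along the non-compact leaves $\Sigma_\rho$ and identifying the precise constant $C(n)$ at which the quadratic decay is just strong enough to close the maximum-principle argument. The explicit dependence of $C(n)$ on the dimension in the statement is almost certainly tied to this criticality, and balancing the Bochner-type lower bound against the non-compactness of each leaf is where the delicate work of the proof will lie.
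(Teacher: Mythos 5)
Your high-level intuition (propagate the flatness of $(\oR,g_0)$ inward along the level sets of $\rho$) matches the paper's strategic starting point, but the mechanism you propose is different from, and has gaps that are not present in, the paper's argument.

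First, a gauge confusion: you write $g = d\rho^2 + h_\rho$ with $|\nabla\rho|_g = 1$ on a collar. The Fefferman--Graham compactification $\rho$ is an \emph{adapted} defining function (solving $-\Delta_+ v = s(n-s)v$ with $s = n+1$), not a geodesic normal one. In general $|\nabla\rho|_g \le 1$ with equality only where $R_g = 0$; the decomposition the paper actually uses is $g = \frac{1}{|\nabla\rho|^2}d\rho^2 + g_\rho$ (start of Section~3). Your subsequent Riccati/Gauss--Codazzi bookkeeping would hold for the geodesic defining function $y$ of Lemma~\ref{lem:geo}, not for $\rho$; but for $y$ the key identities Lemma~\ref{lemma:eqs} and Lemma~\ref{lemma:eqcurvature} are unavailable, and the whole point of assuming the adapted compactification is precisely to have access to those.

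Second, and more fundamentally, the ``transport-type equation for $|Rm_g|$ along the $\rho$-direction'' does not exist as an ODE. The Fefferman--Graham expansion has all locally determined coefficients $h_2,\dots,h_{n-1}$ vanishing when $h=g_0$ is flat, but $h_n$ is a \emph{free} global coefficient; $Rm_g|_{\Sigma_0}=0$ does \emph{not} determine $Rm_g$ on nearby leaves. Any genuine propagation in $\rho$ involves tangential derivatives and is a full PDE, so one cannot ``integrate the initial condition'' leaf by leaf. Likewise a pointwise maximum principle/Bochner argument on each non-compact leaf $\Sigma_\rho\cong\oR$ would need control of the behavior at infinity within the leaf; the decay $|Rm_g|\le C(n)/d^2$ gives bounded curvature on each fixed leaf but not the kind of intrinsic quadratic decay that closes an Omori--Yau or weighted-Bochner argument there.

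The paper avoids both difficulties by an integral monotonicity method. It establishes elliptic equations for $R$ and $A$ coupled to $\rho$ (Lemma~\ref{lemma:eqcurvature}, via Bach-flatness when $n=3$ and the higher-order identity in general), proves Hardy and Caccioppoli estimates in the weighted geometry (Propositions~\ref{prop:hardy}--\ref{prop:weightedgradient2}), and then differentiates the quantities $F(p)=\int_{\rho\le p}\rho^{-1}|A|^{2N}|\nabla\rho|^2$ twice in $p$ (Propositions~\ref{prop:derivatives}, \ref{prop:highdimderivatives}). This produces a second-order ordinary differential \emph{inequality} in $p$ to which a Liouville-type argument (Proposition~\ref{prop:liouville}) applies: the quadratic curvature decay bounds the growth of $F(p)$ to at most logarithmic (via Proposition~\ref{prop:curvaturebound}), while the indicial analysis shows $F(p)/p^\beta$ is non-decreasing for some $\beta>0$, forcing $F\equiv 0$. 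The dimensional dependence in $C(n)$ comes from this indicial-root gap, not from a pointwise Bochner criticality. Your proposal is missing all of this machinery; as written, the middle step you identify as the main obstacle is indeed where it breaks, and the paper's way through is not a maximum principle but a monotonicity formula for integrated curvature over sublevel sets.

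Finally, the ``flat on a collar, then extend by real-analyticity'' step is unnecessary and delicate under $C^{3,\alpha}$ regularity of the compactification; the paper's argument yields $A\equiv 0$ on all of $X$ at once, and then global rigidity follows from $|\nabla\rho|\equiv 1$ and the Hessian rigidity theorem (or Proposition~\ref{prop:globgeo}).
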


In fact, our method establishes the rigidity result under the assumption of the existence of an adapted boundary defining function, where the parameter $\gamma$ lies within a broader range. For the geometric significance of the Fefferman-Graham compactification and its connection to $Q$ -curvature and renormalized volume, we refer the reader to \cite{An4, FG, Gr, GZ} and the references therein.

\begin{theorem} 
Let $n \ge 3$ and $\gamma > \mathrm{max}(\frac{n}{2}-1, 1)$. There exists a positive number $C(n, \gamma)$ with the following significance:

Suppose $(X^{n+1}, \mathbb{R}^n, g_+)$ is a complete, smooth Poincar\'{e}-Einstein manifold with conformal infinity $(\oR, g_{0})$. Assume there exists the adapted compactification with parameter $\gamma$, $\rho \in C^3(\overline{X})$, and let $g = \rho^2 g_+$ be the compactified metric on $\overline{X}$.
Further, assume the following conditions hold:

(1) $R_g\ge 0$

(2) $|Rm_g|(x) \le \frac{C(n, \gamma)}{\mathrm{dist}(x,o)^2}$ for all $x\in \overline{X}$, where $o$ is a fixed point on the boundary.

Then,  $(\overline{X}, \mathbb{R}^n, \rho^2 g_+)$ is isometric to the standard Euclidean upper half-plane. Equivalently $(X^{n+1}, \mathbb{R}^n,  g_+)$ is isometric to the standard upper-half plane model of hyperbolic space.

\end{theorem}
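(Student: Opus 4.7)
The plan is to use the adapted defining function $\rho$ to foliate $\overline{X}$ by the level sets $\Sigma_t=\{\rho=t\}$ and to show, through a Riccati-type equation for $Rm_g$ along the $\nabla_g\rho$-flow, that the curvature of $g=\rho^2 g_+$ vanishes identically. The boundary values at $\rho=0$ are zero by flatness of the conformal infinity, and the decay and scalar-curvature hypotheses are used to propagate this vanishing throughout $\overline{X}$.

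First I would set up the structural equations linking $g$, $\rho$, and the Einstein condition. The relation $g_+=\rho^{-2}g$ combined with $Ric_{g_+}=-ng_+$ yields
\[ (n-1)\rho\,\nabla_g^2\rho + \rho^2\,Ric_g = \bigl[n(|\nabla_g\rho|_g^2-1) - \rho\Delta_g\rho\bigr]g, \]
while the adapted-function condition at parameter $\gamma$ provides an eigenvalue-type PDE involving $\Delta_{g_+}$ that, when rewritten on $(\overline{X},g)$, prescribes $|\nabla_g\rho|_g^2-1$ and $\rho\Delta_g\rho$ as explicit functions of $\rho$ and of the Ricci curvature of $g$. Thus all first- and second-order quantities in $\rho$ are pinned down by $Rm_g$. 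Because the conformal infinity $(\mathbb{R}^n,g_0)$ is flat, the Fefferman--Graham expansion forces $Rm_g\to 0$ at the boundary at a prescribed rate depending on $\gamma$, furnishing the initial condition for the propagation argument.

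The central step is to propagate this boundary vanishing inward. Combining the second Bianchi identity $\mathrm{div}_g\,Rm_g = d\,Ric_g$ with the structural equations produces a schematic Riccati-type system
\[ \nabla_{\nu}Rm_g \;=\; \frac{a(\gamma)}{\rho}\,Rm_g \;+\; Rm_g\star Rm_g \;+\; \text{lower order}, \]
where $\nu=\nabla_g\rho/|\nabla_g\rho|_g$ is the unit normal to $\Sigma_t$ and $a(\gamma)$ is a coefficient whose sign and size are controlled by $\gamma>\max(n/2-1,1)$. The nonnegativity of $R_g$ enters through a Lichnerowicz/Bochner identity for the traceless Ricci $Ric_g^\circ$, yielding $L^2$-control on each $\Sigma_t$, and through divergence identities that let $R_g$ absorb the Ricci-derivative terms. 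Combined with the hypothesis $|Rm_g|\le C(n,\gamma)/\mathrm{dist}(x,o)^2$, the nonlinear term $Rm_g\star Rm_g$ is dominated by the linear one once $C(n,\gamma)$ is chosen small, and a Gr\"onwall integration from $\rho=0$ forces $Rm_g\equiv 0$. A flat $C^3$ metric on $\overline{X}$ with boundary data $(\mathbb{R}^n,g_0)$ must be the Euclidean upper half-plane metric, giving the claimed isometry and hence identifying $g_+$ with the hyperbolic upper half-plane model.

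The main obstacle will be making this Riccati step quantitative. One must identify the precise coefficient $a(\gamma)$ in the propagation equation and verify that $\gamma>\max(n/2-1,1)$ is exactly the range in which $a(\gamma)$ has the sign and size needed for the boundary data to dominate the nonlinear feedback; this indicial analysis will dictate the shape of $C(n,\gamma)$. A delicate secondary point is comparing the defining-function weight $\rho$, which drives the ODE, with the distance weight $\mathrm{dist}(x,o)$, which controls the nonlinearity via the hypothesis. Near the boundary this comparison uses $|\nabla_g\rho|_g\to 1$ and the level-set geometry of the adapted function, while far from the boundary one must combine the curvature decay with the foliation to extend the ODE argument globally.
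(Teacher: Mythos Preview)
Your high-level picture---foliate by level sets of $\rho$, use the structural equations, and propagate the boundary vanishing of curvature inward---matches the paper's strategy. But the mechanism you propose for the propagation step is not the one that actually works, and the gap is substantive.

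You write a schematic first-order equation $\nabla_\nu Rm_g = \tfrac{a(\gamma)}{\rho} Rm_g + Rm_g\star Rm_g$ and plan a Gr\"onwall argument. The actual curvature equation coming from the Einstein condition (the paper's Lemma~\ref{lemma:eqcurvature}) is \emph{elliptic}: it expresses $\Delta A_{ij}$ in terms of $\nabla^2 R$, first-order tangential terms like $\rho^{-1}\rho^m(A_{mi,j}+A_{mj,i})$, and curvature products. There is no way to extract a closed pointwise ODE in the normal direction, because tangential derivatives of $A$ appear with coefficients of the same order. The paper therefore does not work pointwise at all. It integrates $|A|^{2k}R^{2l}$ over the sublevel sets $\{\rho\le p\}$, defining $F_{k,l}(p)=\int_{\rho\le p}\rho^{-1}|A|^{2k}R^{2l}|\nabla\rho|^2$, and after two integrations by parts obtains a \emph{second-order} Euler-type inequality $F''(p)\ge -\tfrac{\epsilon}{p^2}F(p)+\tfrac{c(\gamma)}{p}F'(p)$ for a suitable linear combination $F=\sum a_l F_{N-l,l}$. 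The conclusion then comes from indicial-root analysis and a monotonicity of $F(p)/p^\beta$, compared against the merely logarithmic growth forced by the quadratic decay hypothesis; this is not a Gr\"onwall argument.

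Two further points your sketch does not address. First, the level sets $\Sigma_t\simeq\mathbb{R}^n$ are non-compact, so even to make sense of the integrated quantities and to discard boundary terms at spatial infinity one needs the metric comparison $C^{-1}(\rho+1)^{-2a}g_0\le \phi_\rho^*g_\rho\le C(\rho+1)^{2a}g_0$, the area monotonicity, and the $L^{2N}$ bounds of Section~3; these require $2N>n/2$ and are where the smallness of $C(n,\gamma)$ first enters. Second, the threshold $\gamma>\max(\tfrac{n}{2}-1,1)$ is not an abstract sign condition on your $a(\gamma)$: it is exactly the condition (Lemma~\ref{lemma:algebraic2}) that the coefficient of $E^2(\nabla\rho,\nabla\rho)$ in $\tfrac{1}{n}\rho A(\nabla R,\nabla\rho)-2(n-3)A^2(\nabla\rho,\nabla\rho)$ be positive, which is what allows the boundary term $g_k$ in the second-derivative formula to be absorbed. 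Your ``indicial analysis will dictate the shape of $C(n,\gamma)$'' is on the right track, but the indicial equation is $x^2-(\min(n-3,2\gamma-3)-\epsilon)x+\epsilon=0$, arising from the second-order inequality above, not from a first-order Riccati system.
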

We note that in the theorem above, $C^{3,\alpha}$ regularity of the metric is sufficient when $\gamma \ge \frac{3}{2}$. However, when, $1<\gamma<\frac{3}{2}$, a smoothness assumption is required to fully utilize the asymptotic expansion of $\rho$ near the boundary. For further details, we refer readers to Lemma \ref{lem:adpreg1} and Lemma \ref{lem:adpreg3}.

In the final part of the paper, we present several explicit examples of Riemannian manifolds with non-compact boundaries that are conformal to Poincaré-Einstein manifolds in the interior and exhibit quadratic curvature decay at infinity.

These examples arise from stereographic projections of various conformally compact Einstein manifolds in the classical sense, including smooth, non-smooth, and singular cases. The non-smooth example, originally constructed by Bahuaud and Lee \cite{BL}, admits a $C^{1,1}$ conformal compactification in the classical sense. The singular example belongs to the Plebański-Demiański family and was shown by Alvarado, Ozuch, and Santiago \cite{AOS} to have a conic-edge singularity in its conformal compactification.

\subsection{Related results and method of proof}
The rigidity theorem for Poincar\'{e}-Einstein manifolds with regular conformal compactification in the classical sense is well known, i.e.,   if the conformal infinity is $(\mathbb{S}^n, [g_{\mathbb{S}}])$,  then $(X^{n+1}, g_+)$ must be isometric to the hyperbolic space $\mathbb{H}^{n+1}$. This was first proved  by Shi and Tian \cite{ST}  under extra assumptions on the topology and curvature  deday. Later Qing  \cite{Qi1} provided another proof using the positive mass theorem (PMT). The rigidity problem was subsequently resolved without extra assumptions by Dutta and Javaheri \cite{DJ},  and Li, Qing and Shi \cite{LQS}, who derived a lower bound for the Bishop-Gromov volume ratio using the Yamabe constant at conformal infinity. Later, an alternative proof that relied solely on the relationship between conformal invariants of $(X^{n+1}, g_+)$ and $(\mathbb{S}^n, [g_{\mathbb{S}}])$ was given by Chen, Lai and Wang   \cite{CLW1},  and Wang and Wang  \cite {WW}. 

Several other uniqueness theorems exist for cases where the conformal infinity is compact but not the standard sphere. For instance, when the conformal infinity is sufficiently close to the round sphere, Chang, Ge, Jin and Qing \cite{CGJQ} proved that Graham-Lee's small perturbation solution from \cite{GL} is unique. More recently, in a preprint \cite{CYZ}, Chang, Yang and Zhang demonstrated that if the conformal infinity is $\mathbb{S}^1(\lambda)\times \mathbb{S}^2$ with a sufficiently large circle radius $\lambda$, then the Poincar\'{e}-Einstein fill-in is unique. We also note that uniqueness is established when the conformal infinity is homogeneous, as shown in \cite{Li1, Li2}.

In a recent preprint work \cite{CG2}, Chang-Ge proved that if the the conformal infinity is the standard  Euclidean space $\mathbb{R}^3$ with some further  assumptions including the Ricci curvature decay and positive injectivity radius on the scalar flat ($\gamma =\tfrac{1}{2}$ case) conformal compactification, then  $(X^{4}, g_+)$ is isometric to the hyperbolic space $\mathbb{H}^{4}$.  In this paper, we consider the generalized conformal compactification for Poincar\'{e}-Einstein manifold $(X^{n+1}, g_+)$ such that the conformal infinity is not compact. Without extra assumption, the rigidity and uniqueness problem is much harder than the classical case, since we lose the control of infinity.

Our proof relies on the fact that the adapted boundary compactification serves as a regularized distance function from the boundary. In the study of Riemannian manifolds with non-negative Ricci curvature, as shown in \cite{C} and \cite{CM3}, the function $ b= G^{\frac{1}{2-n}}$, where $G$ is the Green's function for the Laplacian, acts as a regularized distance function from a fixed point. Colding and Minicozzi established important monotonicity formulas for the $|\nabla b|$-weighted area and volume, which played a crucial role in proving the uniqueness of tangent cones for Einstein metrics in \cite{CM4}.

There are notable analogies between our adapted boundary defining function $\rho$ and the function $b$ introduced by Colding and Minicozzi. First, both satisfy $|\nabla \rho |\le 1$ and $|\nabla b|\le1$ when the manifold has non-negative scalar curvature and non-negative Ricci curvature, respectively. Moreover, if equality holds at any point (in the interior of Poincaré-Einstein manifolds), the manifold must be flat Euclidean space by the strong maximum principle. Second, similar to the case of manifolds with non-negative Ricci curvature, we observe a comparable monotonicity property for the $|\nabla \rho|$-weighted area functional (see Lemma \ref{lemma:areabound}).

In the work of Colding and Minicozzi \cite{CM1}, \cite{CM2}, they studied the space of harmonic functions on Riemannian manifolds with at most polynomial growth at infinity, leveraging the frequency formula. In our case, we compute the  $|\nabla \rho|$-weighted $L^{2N}$-norm over the level sets of $\rho$ for sufficiently large integer $M$. The curvature tensor satisfies a specific elliptic system, such as the Bach-flat equation in the case where $n+1 =4 $. 

In the case of Einstein manifolds, Bando, Kasue, and Nakajima \cite{BKN} used the Moser iteration technique on the elliptic differential inequality $\Delta |Rm| \ge - C |Rm|^2$ to study the behavior of Einstein manifolds both at infinity and near isolated singularities. This well-known technique has been applied in various contexts, such as  \cite{CT} for Ricci-flat manifolds, \cite{TV1} and \cite{TV2} for Bach-flat manifolds in dimension 4, and \cite{AV} for more general obstruction-flat manifolds in higher dimensions.

Instead of the Moser iteration technique, we use the monotonicity arguments developed in \cite{L, STV1, STV2}, where Liouville-type theorems for subharmonic functions and certain degenerate elliptic PDEs defined on the upper half-space were proven. We generalize these arguments to the case of the Schr\"odinger operator  $\Delta u = f u$ on the flat upper half-plane, where $f$ and $u$ are functions with bounded growth (see Proposition \ref{prop:liouville}). Simply put, if the curvature exhibits sufficiently fast quadratic decay, the curvature equation coupled with $\rho$ implies that the $|\nabla \rho|$-weighted $L^{2N}$-norm over the level sets of $\rho$ exhibits a monotonicity property, which allows us to conclude that the curvature is flat.

Our method applies to a broad range of the parameter $\gamma$ and dimensions $n$. Specifically, we cover $\gamma >\max (1, \frac{n}{2}-1)$, which includes the compactifications of Fefferman-Graham and Lee in the Poincaré-Einstein setting. This is in contrast to the constant scalar curvature case studied in \cite{AV, TV1, TV2}, or the  $\gamma =\tfrac{3}{2}$ (flat $Q$-curvature) case studied in \cite{CG1, CGJQ, CGQ} when $n\geq 5$ . Furthermore, it is worth noting that our assumptions do not necessarily imply the finiteness of $\int_X |Rm|^{\frac{n+1}{2}} dV_g < \infty$.

We are able to bypass the difficulty arising from the fact that the elliptic PDE for scalar curvature is degenerate (see Lemma \ref{lemma:eqcurvature}) and that the curvature equation (obstruction-flatness) is a higher-order elliptic PDE in higher dimensions. Additionally, we establish and apply a curved version of Hardy's inequality (Proposition \ref{prop:hardy}) to address the issue that the adapted metric is not necessarily smooth.

\subsection{Organization of the paper}

In Section 2, we review some preliminary facts from geometric scattering theory and discuss the curvature equations associated with the adapted boundary compactification of Poincaré-Einstein manifolds. In Section 3, under the assumptions of non-negative scalar curvature and sufficiently fast quadratic curvature decay, we establish various estimates for the metric and compute the integration of curvature tensors over the level sets of the adapted boundary defining function. In Section 4, we derive an ordinary differential inequality that describes the propagation of curvature along the level sets. In Section 5, we prove the rigidity theorems. Finally, in Section 6, we provide non-trivial examples of Poincaré-Einstein manifolds with non-compact conformal infinities. Readers are advised to read Sections 4.1 and 5.1 before Sections 4.2 and 5.2, as the case $n\geq4$ equires additional algebraic computations.

\section*{Acknowledgement}
The first author was supported by KIAS Individual Grant, with grant number: MG096101.
The second author was supported by National Natural Science Foundation of China No. 12271348 and Shanghai Science and Technology Innovation Action Plan No. 20JC1413100.

\section{Preliminaries}

\subsection{Geodesic normal defining function}
Assume $(X^{n+1}, g_+)$ is a complete Poinca\'{e}-Einstein metric of  $C^{k,\alpha}$ regularity ($k\geq 2, 0\leq \alpha<1$) with non-compact conformal infinity $(M, [h])$. 
Fix a boundary representation $h$. 
Then  $y\in C^1(\overline{X})$ is called the \textit{geodesic normal defining function} associated to $(g_+, h)$  if it is a boundary defining function and satisfies that $y^2g_+|_{M}=h$ and $|\nabla y|^2_{y^2g_+}\equiv 1$ in a neighborhood of $M$. 

\begin{lemma}\label{lem:geo}
If $x$ is a smooth boundary defining function such that $\bar{g}=x^2g_+\in C^{k,\alpha}(\overline{X})$ and $ h=g|_{M}$, then  there exists a geodesic normal defining function $y$ associated to $(g_+,h)$ in a neighborhood $U \subset X$ of $M$,  such that
\begin{itemize}
	\item[(1)] $y=x+O(x^2)$; 
	\item[(2)] $g=y^2g_+$ has a $C^{k-1,\alpha}$ extension to $\overline{X}$; 
	\item[(3)] for each $p\in M$, there exist $\epsilon_p>0$,  a boundary neighborhood $V_p\subset M$ of $p$ and  a $C^{k-1,\alpha}$ open map : $\Phi: V_p\times [0,\epsilon_p)\rightarrow U$ such that
	$$	\Phi^*g=
	\begin{cases}
	\displaystyle    y^{-2}\left(dy^2+h+x^2h_2+\cdots +	y^nh_n +O(y^{n+1})  \right),  \ &\textrm{for $n$ odd}, 
		\\
	\displaystyle 	y^{-2}\left(dy^2+h+x^2h_2+\cdots +y_n(\log y)O_n+	y^nh_n  +O(y^{n+1}\log y)\right), \  &\textrm{for $n$ even},
	\end{cases}
	$$
	where $h_2, \cdots, h_{n-1}$ and $O_n$ are locally determined by $h$ and $h_n$ is the global term. In particular, 
	 $h_2=-A_h$ with $A_h$ the Schouten tensor of $h$; $O_n$ is the obstruction tensor. 
\end{itemize}	
\end{lemma}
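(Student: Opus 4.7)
I would follow the classical Graham--Lee/Fefferman--Graham construction of the geodesic normal defining function, adapted to the non-compact boundary. The construction is fundamentally local along $M$: each of the three properties is established in a tubular neighborhood of an arbitrary point $p\in M$, and the non-compactness of $M$ only affects the size (not the existence) of this tubular neighborhood.

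The first step is to set up and solve the eikonal equation for the conformal factor relating $\bar g$ to $y^2 g_+$. Writing $y=e^u x$, so that $y^2 g_+=e^{2u}\bar g$, the condition $|dy|^2_{y^2 g_+}\equiv 1$ is equivalent to the first-order nonlinear PDE
\begin{equation*}
|dx|^2_{\bar g}+2x\,\bar g(dx,du)+x^2|du|^2_{\bar g}=e^{2u}
\end{equation*}
with boundary condition $u|_M=0$. Since $|dx|^2_{\bar g}|_M=1$ for any PE compactification, the data are compatible at $M$, and differentiating in the normal direction $\partial_x$ shows the equation is non-characteristic there. Hamilton--Jacobi theory (method of characteristics) then produces a unique $C^{k-1,\alpha}$ solution $u$ on a tubular neighborhood of any $p\in M$; patching via uniqueness yields $u\in C^{k-1,\alpha}$ on a one-sided neighborhood $U$ of $M$. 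Properties (1) and (2) follow immediately: $u|_M=0$ together with $u\in C^1$ gives $y=xe^u=x+O(x^2)$, and $g=e^{2u}\bar g\in C^{k-1,\alpha}(U)$ with $g|_M=h$.

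For part (3), in a coordinate chart $V_p\subset M$ around $p$, I would extend the coordinates $(x^1,\dots,x^n)$ off $M$ by flowing along $\nabla^g y$, which by the eikonal equation is a unit vector orthogonal to level sets of $y$. This defines $\Phi:V_p\times[0,\epsilon_p)\to U_p$ and puts $g$ in the normal form $g=dy^2+g_y$, where $g_y$ is a $y$-family of metrics on $V_p$ with $g_0=h$. Substituting $g_+=y^{-2}(dy^2+g_y)$ into $Ric_{g_+}=-ng_+$ produces the standard second-order ODE system in $y$ for $g_y$, whose formal power-series solution is the Fefferman--Graham expansion. Matching coefficients order by order gives $g_y=h-y^2 A_h+\cdots$, with each $h_j$ for $j<n$ determined locally by $h$; at the critical order $j=n$ the recursion becomes resonant, producing a globally undetermined term $h_n$ when $n$ is odd, and forcing a log term $y^n(\log y) O_n$ involving the obstruction tensor when $n$ is even.

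The main obstacle in the non-compact boundary setting is that the radius $\epsilon_p$ of the tubular neighborhood on which the eikonal solution (and hence the normal form) exists need not be uniform in $p\in M$, so one cannot in general hope for a single product tubular neighborhood $M\times[0,\epsilon)$. This is precisely why property (3) is phrased in terms of a local chart around each $p$ rather than a global product structure. Beyond this point, the argument is a direct adaptation of the compact-boundary case, and the regularity bookkeeping in the $C^{k,\alpha}$ category follows from standard boundary regularity theory (cf.\ \cite{CDLS, FG}).
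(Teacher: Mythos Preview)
Your proposal is correct and is essentially the same approach as the paper's: the paper's proof simply cites \cite[Lemma 5.1]{Le1} for the existence and regularity of the geodesic normal defining function and \cite{CDLS} for the Fefferman--Graham expansion, and what you have written is exactly a sketch of the arguments in those references (the eikonal equation $|dy|^2_{y^2g_+}=1$ solved by Hamilton--Jacobi theory, followed by the normal-form expansion from the Einstein equation). Your remark that non-compactness of $M$ only affects the \emph{size} of the tubular neighborhood, not existence, is precisely the point of phrasing (3) locally in $p$.
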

\begin{proof}

The existence and regularity theories  for geodesic normal defining function are according to the proof of \cite[Lemma 5.1]{Le1}. 
	The Taylor expansion of metric $g$ is from the boundary regularity theorem of \cite{CDLS}. 
\end{proof}

\begin{lemma}\label{lem:geocur}
If $y\in C^2(\overline{X})$ is a geodesic normal defining function in a neighborhood $U$ of $M$, then we have the curvature identities: 
\begin{itemize}
	\item[(1)] $ \Delta y = -\frac{1}{2n}yR$ in $U$;
	\item[(2)] $ Ric(\nabla y, \nabla y) = \frac{1}{2n} R$ in $U$;
	\item[(3)] $ R = \frac{n}{n-1} R_h$ on $M$;
	\item[(4)] $(M,h)$ is totally geodesic in $(\overline{X},g)$. 
\end{itemize}	
\end{lemma}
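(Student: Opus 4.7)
The plan is to derive all four identities from a single master equation obtained from the conformal transformation law for the Ricci tensor. Writing $g_+ = e^{2\sigma} g$ with $\sigma = -\log y$ and using $|\nabla y|_g = 1$ in $U$ so that
\begin{equation*}
\nabla \sigma = -\frac{\nabla y}{y}, \quad
\nabla^2 \sigma = -\frac{\nabla^2 y}{y} + \frac{dy \otimes dy}{y^2}, \quad
|\nabla \sigma|_g^2 = \frac{1}{y^2}, \quad
\Delta \sigma = -\frac{\Delta y}{y} + \frac{1}{y^2},
\end{equation*}
the standard $(n+1)$-dimensional formula
\begin{equation*}
Ric_{g_+} = Ric_g - (n-1)\bigl(\nabla^2 \sigma - d\sigma \otimes d\sigma\bigr) - \bigl(\Delta \sigma + (n-1)|\nabla \sigma|_g^2\bigr) g
\end{equation*}
sees the cross terms $dy \otimes dy/y^2$ cancel, while the singular $1/y^2$ contribution on the right matches exactly the Einstein value $Ric_{g_+} = -n g_+ = -(n/y^2) g$ and drops out. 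What survives is the clean first-order identity
\begin{equation}
y\, Ric_g + (n-1)\nabla^2 y + (\Delta y)\, g = 0 \qquad \text{in } U, \label{eq:geomaster}
\end{equation}
from which the whole lemma will follow.

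From \eqref{eq:geomaster} items (1) and (2) are immediate. Taking the $g$-trace of \eqref{eq:geomaster} yields $yR + (n-1)\Delta y + (n+1)\Delta y = 0$, hence $\Delta y = -yR/(2n)$, which is (1). For (2), I differentiate $|\nabla y|_g^2 \equiv 1$ to obtain $\nabla^2 y(\nabla y, \cdot) = 0$, in particular $\nabla^2 y(\nabla y, \nabla y) = 0$; evaluating \eqref{eq:geomaster} on the pair $(\nabla y, \nabla y)$ then leaves $y\, Ric_g(\nabla y, \nabla y) + \Delta y = 0$, which combined with (1) gives $Ric_g(\nabla y, \nabla y) = R/(2n)$.

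For (4), I restrict \eqref{eq:geomaster} to $M = \{y = 0\}$. The $C^{k-1,\alpha}$ regularity furnished by Lemma \ref{lem:geo} lets $R_g$ extend continuously to $\overline{X}$, so $y\, Ric_g$ vanishes on $M$; by (1) so does $\Delta y|_M$. The restriction therefore reduces to $\nabla^2 y|_M = 0$, which is precisely the vanishing of the second fundamental form of $M$ in $(\overline{X}, g)$ with respect to the inward unit normal $\nabla y$. (Equivalently, in the normal form of Lemma \ref{lem:geo}(3), $\partial_y h_y|_{y=0} = 0$ because the first nontrivial term in the expansion of $h_y$ is $y^2 h_2$.) For (3), I use the Gauss equation with $II = 0$: in an orthonormal frame $\{\nabla y, e_1, \ldots, e_n\}$ at a point of $M$,
\begin{equation*}
\sum_i Ric_g(e_i, e_i)|_M = R_h + Ric_g(\nabla y, \nabla y)|_M,
\end{equation*}
while the total trace gives $R = Ric_g(\nabla y, \nabla y) + \sum_i Ric_g(e_i, e_i)$. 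Substituting (2) yields $R_h = \tfrac{n-1}{n} R$, i.e., $R = \tfrac{n}{n-1} R_h$ on $M$.

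No deep obstacle is anticipated; the argument is essentially conformal-geometry bookkeeping. The only technical care needed is in tracking signs and powers of $y$ so that the $1/y^2$ singularities cancel against $-n g_+$, and in invoking the stated regularity to justify the continuous restriction of \eqref{eq:geomaster} to $M$ in the proofs of (3) and (4).
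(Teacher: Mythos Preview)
Your proof is correct and is essentially the computation the paper defers to \cite[Lemma 5.2]{GL}: the conformal change formula for Ricci under $g_+=y^{-2}g$ together with the Einstein condition yields exactly the master identity $y\,Ric_g+(n-1)\nabla^2 y+(\Delta y)g=0$, from which (1)--(4) follow as you indicate. The only remark is that your appeal to boundary regularity for $R_g$ in part (4) is slightly stronger than the bare hypothesis $y\in C^2(\overline{X})$; but this is the same implicit assumption the paper and Graham--Lee make, and in any case one can bypass it by restricting the master identity to $M$ and first tracing to get $\Delta y|_M=0$ directly, then concluding $\nabla^2 y|_M=0$.
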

\begin{proof}

Since the curvatures in (1)-(4) are also locally determined, the computations are the same as \cite[Lemma 5.2]{GL}. 
\end{proof}

Notice that here $M$ is non-compact and hence we can not find a uniform  $\epsilon>0$ such that corresponding to every boundary point $p$,  $y$ exists in the interval $[0,\epsilon)$.   Actually, the range for the existence of $y$ has more deep implication.

\begin{proposition}\label{prop:globgeo}
Let $(X^{n+1}, g_+)$ be a complete Poincar\'{e}-Einstein manifold with conformal infinity $(\oR, g_{0})$. 
Suppose there exists a globally defined geodesic defining function $y \in C^{3}(\overline{X})$ such that $g=y^2g_+\in C^{3}(\overline{X})$. Then, $(X^{n+1}, g_+)$ is isometric to the standard upper-half plane model of hyperbolic space.
\end{proposition}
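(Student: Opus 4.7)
The plan is to put the metric in the product/geodesic gauge induced by $y$ and exploit a Riccati blow-up mechanism: the Einstein equation forces a simple ODE on the mean curvature of the level sets of $y$, and a Cauchy-Schwarz-based comparison shows the mean curvature must vanish identically, whence rigidity. I interpret ``globally defined geodesic defining function'' as $|\nabla y|^2_g\equiv 1$ on all of $\overline{X}$; the flow of the unit vector field $\nabla y$ then identifies $(\overline{X},g)$ with $([0,\infty)_y\times \mathbb{R}^n, dy^2+h_y)$ for a smooth $y$-family of metrics with $h_0=g_0$, and Lemma \ref{lem:geocur}(4) gives that $M$ is totally geodesic, so the second fundamental form $k=\tfrac12\partial_y h_y$ and its trace, the mean curvature $H=\mathrm{tr}_{h_y}k=\Delta y$, both vanish at $y=0$.

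Converting $Ric_{g_+}=-ng_+$ through the conformal change for $g=y^2g_+$ and contracting with $\partial_y\otimes\partial_y$ under $|\nabla y|^2\equiv 1$ yields $Ric_g(\partial_y,\partial_y)=-H/y$. Combining with the traced Riccati identity $Ric_g(\partial_y,\partial_y)=-\partial_y H - |k|^2$ along the unit-speed geodesic congruence gives
\begin{equation*}
\partial_y H + |k|^2 = \frac{H}{y}, \qquad H|_{y=0}=0.
\end{equation*}
Rewriting as $\partial_y(H/y) = -|k|^2/y\leq 0$ and integrating from $0$ (using $H/y\to 0$ as $y\to 0^+$, which follows from $H=O(y^2)$ in the Fefferman-Graham expansion with $h_0$ flat) gives $H\leq 0$. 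Set $u=-H\geq 0$. Then $\partial_y u = |k|^2 + u/y$, and by Cauchy-Schwarz $|k|^2\geq (\mathrm{tr}\, k)^2/n=u^2/n$, so at each spatial point $x\in\mathbb{R}^n$,
\begin{equation*}
\partial_y u(y,x)\geq \frac{u(y,x)^2}{n} + \frac{u(y,x)}{y}.
\end{equation*}

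If $u(y_*, x_*)=a>0$ at some interior point, then comparing along the $y$-line through $x_*$ with the Riccati ODE $v'=v^2/n$, $v(y_*)=a$, yields $u(y, x_*)\geq na/(n-a(y-y_*))$, which blows up as $y\nearrow y_* + n/a < \infty$. Completeness of $g_+$ forces $y$ to range over all of $[0,\infty)$ (a flow line of $\nabla y$ that failed at some finite $T$ would have finite $g_+$-length $\log(T/s_0)$ from any $s_0>0$ but no limit point, contradicting completeness of $g_+$). So the point $(y_*+n/a, x_*)$ lies in $\overline{X}$; but $H=\Delta y$ is continuous and finite on $\overline{X}$ since $g\in C^3(\overline{X})$, contradicting the blow-up. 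Therefore $u\equiv 0$, i.e.\ $H\equiv 0$, and the Riccati identity then forces $|k|^2=H/y-\partial_y H=0$, so $k\equiv 0$ and $\partial_y h_y\equiv 0$. Hence $h_y\equiv h_0=g_0$, giving $g=dy^2+g_0$ and $g_+=y^{-2}(dy^2+g_0)$, the standard upper half-plane model of $\mathbb{H}^{n+1}$.

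The delicate point I anticipate is justifying the global eikonal condition from the stated hypothesis. The natural reading is that both ``globally defined'' and ``geodesic'' hold on all of $\overline{X}$, i.e.\ $|\nabla y|^2_g\equiv 1$ globally, which is what the argument above uses. If instead only the local eikonal near $M$ is given, one would first need to propagate it: the conformal identity $n(n+1)(1-|\nabla y|^2) = -y^2 R_g - 2ny\Delta y$ links the deviation from the eikonal to scalar curvature and $\Delta y$, reducing global propagation to a unique continuation question for this coupled system on the (analytic) Einstein manifold $g_+$.
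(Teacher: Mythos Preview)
Your proof is correct and follows essentially the same strategy as the paper's: both combine the global eikonal $|\nabla y|^2_g=1$ with the Einstein equation to obtain a single identity, establish a sign ($H\le 0$, equivalently $R\ge 0$), and then run a Riccati blow-up along the $y$-lines to force the quantity to vanish. The only difference is packaging---you write the key identity as the Riccati equation $\partial_y H+|k|^2=H/y$ for the mean curvature $H=\Delta y$, while the paper writes the same thing via Bochner's formula in terms of $R=-2nH/y$ (your Cauchy--Schwarz $|k|^2\ge H^2/n$ is in fact the sharp version of the paper's $|\nabla^2 y|^2\ge (\Delta y)^2/(n+1)$, since $\nabla^2 y$ has no normal components in this gauge).
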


\begin{proof}
Let $h=g|_{\partial X}$. Applying the Laplacian to the equation $|\nabla y |^2 =1 $ and using Bochner’s formula along with the curvature identities in Lemma \ref{lem:geocur}, we obtain: 
\begin{align*}
0 = \frac{1}{2}\Delta(|\nabla y|^2) &= |\nabla^2 y|^2 + \nabla y \cdot \nabla(\Delta y) + Ric(\nabla y, \nabla y)\\
& = |\nabla^2 y|^2 + \partial_y (-\frac{1}{2n}yR) + \frac{1}{2n} R \\
& \ge \frac{1}{n+1} (\Delta y)^2 -\frac{1}{2n} y \partial_y R \\
& = \frac{1}{4n^2(n+1)} y^2 R^2 -\frac{1}{2n} y \partial_y R.
\end{align*}
From this inequality, we deduce that $\partial_y R \ge 0$. Given that the scalar curvature $R_h$ of the boundary metric is zero, it follows that the scalar curvature $R$ is non-negative. 

Now, suppose there exists a point $z_0 = (x_0, y_0)$ where  $R>0$.  Then, along the geodesic starting from $(x_0, 0)$, we have the following differential inequality:
$$ \frac{1}{R^2} \partial_y R \ge \frac{1}{2n(n+1)} y \text{ for $y\ge y_0$}. $$
Integrating both sides yields:
$$ 
\frac{1}{4n(n+1)}(y^2 - y_0^2) \le -\frac{1}{R(x_0, y)} + \frac{1}{R(x_0, y_0)} < \frac{1}{R(x_0, y_0)}. 
$$
Taking the limit $y \rightarrow \infty$ leads to a contradiction.

Thus, we conclude that $R \equiv 0$, and it follows that $\nabla^2 y \equiv 0$. Consequently, the metric $g = y^2 g_+$ is isometric to the Euclidean upper half-space by the Hessian rigidity theorem.
\end{proof}

\subsection{Adapted boundary defining function}
Assume $(X^{n+1}, g_+)$ is a complete Poinca\'{e}-Einstein manifold of   $C^{k,\alpha}$ regularity ($k\geq 2, 0\leq \alpha<1$)   with non-compact conformal infinity $(\partial X, [\hat{g}])$. 
We consider the adapted defining function for $g_+$ in a similar way as the compact model.

Let $s>\frac{n}{2}$ be a real parameter and denote $s = \frac{n}{2} +\gamma$ for $\gamma > 0$.
Then  $\rho \in C^1(\overline{X})$ is called an \textit{adapted boundary defining function} if it is a boundary defining function and satisfies
\begin{itemize}
\item[(i)] 	either $\rho=v^{\frac{1}{n-s}}$ for $s>\frac{n}{2}, s\neq n$, where $v$ is a positive function satisfy the  equation:
\begin{equation}\label{eq.adp1}
-\Delta_+  v - s(n-s)v = 0; 
\end{equation}

\item[(ii)] or $\rho =e^w$ for $s=n$, where $w$ satisfy the   equation
\begin{equation}\label{eq.adp2}
 	-\Delta_+ w=n. 
 \end{equation}
\end{itemize}
Then $g=\rho^2g_+$ is called the \textit{adapted conformal compactification} of $g_+.$ In the following, we fix some $s=\frac{n}{2}+\gamma>\frac{n}{2}$.

The existence theory for adapted boundary defining function relies on the solvability of PDEs (\ref{eq.adp1}) and (\ref{eq.adp2}), however which is not well established for the case of non-compact conformal infinity. For the case of conformally compact Einstein  the spectrum analysis of $\Delta_+$ and Fredholm theorem for  $\Delta_++s(n-s)$ can be refered to \cite{MM, Le2}.

 If exists, then it is obviously that $\rho\in C^{k+1, \alpha}(X)$ by the classical elliptic theory in the interior. 
The regularity of $\rho$ up to the boundary requires a bit more attention. 
There are two ways to observe it.

\begin{lemma}\label{lem:adpreg1}
Suppose $x$ is a smooth boundary defining function such that $\bar{g}=x^2g_+\in C^{k,\alpha}(\overline{X})$. If there exists an adapted  defining function $\rho =x(1+\phi)$ with $\phi=o(1)$, then  $\phi \in C^{l,\beta}(\overline{X})$ and $g=\rho^2g_+\in C^{l,\beta}(\overline{X})$, 
where
\begin{itemize}
	\item[(1)] $l+\beta=k+\alpha$ if $k+\alpha<2\gamma$;
	\item[(2)] $l+\beta=k+\alpha$ if $k+\alpha\geq 2\gamma$ and $2\gamma$ is some positive odd integer;
	\item[(3)] $l+\beta<2\gamma$ if $k+\alpha\geq 2\gamma$ and $2\gamma$ is not a positive odd integer. 
\end{itemize}
\end{lemma}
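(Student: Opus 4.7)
The strategy is to convert the equation defining $\rho$ into a degenerate-elliptic PDE for $\phi$ on $\overline{X}$, read off its indicial structure at $\{x=0\}$, and then trade the regularity of $\bar g$ against the structure of the subleading ``scattering'' term. Writing $v = \rho^{n-s} = x^{n-s}(1+\phi)^{n-s}$ and using the conformal relation $g_+ = x^{-2}\bar g$ to rewrite the adapted equation $-\Delta_+ v - s(n-s)v = 0$, a direct computation yields, after dividing out an appropriate power of $x$, a quasilinear degenerate-elliptic equation on $\overline X$ whose principal part at $\{x=0\}$ takes the form
\[
x\,\partial_x^2 \phi + (1-2\gamma)\,\partial_x \phi + x\,L\phi + c(x,y)\,\phi \;=\; F(x,y,\phi,\nabla\phi),
\]
where $L$ is a tangential second-order operator, all coefficients and $F$ lie in $C^{k-2,\alpha}$, and $F$ is at least quadratic in $\phi$. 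The indicial polynomial at $\{x=0\}$ is $\sigma(\sigma - 2\gamma)$, so the two indicial roots are $0$ and $2\gamma$; the assumption $\phi = o(1)$ singles out the trivial one. The $s=n$ case is handled analogously starting from $w=\log\rho$.

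Next, standard boundary regularity for edge-type elliptic operators (e.g.\ a Schauder bootstrap along the lines of Andersson-Chrusciel, or Mazzeo's edge calculus) provides a finite polyhomogeneous expansion of $\phi$ at $\{x=0\}$ of the form
\[
\phi \;\sim\; \sum_{j\ge 1,\;2j<2\gamma}\phi_{2j}(y)\,x^{2j} \;+\; b(y)\,x^{2\gamma}(\log x)^{\epsilon} \;+\; (\text{remainder}),
\]
where each $\phi_{2j}$ is determined locally by the jets of $\bar g$. These forced terms are supported only on \emph{even} powers $x^{2j}$, because the even-order Taylor expansion of $\bar g$ in geodesic normal form (Lemma~\ref{lem:geo}) is inherited by the nonlinearity $F$. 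The log factor ($\epsilon=1$) appears in the scattering term precisely when $2\gamma$ resonates with this even-powered local series, i.e.\ exactly when $2\gamma$ is a positive \emph{even} integer; if $2\gamma$ is non-integer or a positive odd integer, no logarithm arises.

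The three cases of the lemma now follow by comparing $k+\alpha$ with $2\gamma$. If $k+\alpha<2\gamma$, then the scattering piece $x^{2\gamma}$ is already of strictly higher H\"older regularity than $C^{k,\alpha}$ and plays no role, so Schauder theory applied to the equation above gives $\phi\in C^{k,\alpha}(\overline X)$, yielding (1). If $k+\alpha\ge 2\gamma$ and $2\gamma$ is a positive odd integer, then $x^{2\gamma}$ is smooth and the previous paragraph rules out log terms, so the same argument again gives $\phi\in C^{k,\alpha}$, which is (2). Otherwise, either $2\gamma\notin\mathbb{Z}$, so that $x^{2\gamma}$ is only $C^{l,\beta}$ for $l+\beta<2\gamma$, or $2\gamma$ is a positive even integer and $x^{2\gamma}\log x$ is similarly only $C^{l,\beta}$ for $l+\beta<2\gamma$; either way the regularity of $\phi$ is capped strictly below $2\gamma$, which is (3). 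The corresponding statement for $g=(1+\phi)^2\bar g$ follows from H\"older-space algebra.

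The main anticipated obstacle is the parity claim in the second paragraph: namely, that only even integer powers $x^{2j}$ appear in the locally forced part of the expansion of $\phi$ below order $2\gamma$. This amounts to propagating the even-parity Taylor expansion of $\bar g$ provided by Lemma~\ref{lem:geo} through the nonlinearity $F$; since $\bar g$ is only $C^{k,\alpha}$, the propagation must be carried out at the level of finitely many jets, after which the proof reduces to a careful bookkeeping of indicial roots combined with standard Schauder estimates.
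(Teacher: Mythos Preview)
The paper does not actually prove this lemma: its entire proof is the one-line citation ``This is from \cite[Lemma 4.6--4.7, 4.9--4.10]{WZ}.'' Your sketch is the standard argument and is essentially what one expects to find in that reference: rewrite the scattering equation as a degenerate-elliptic boundary problem for $\phi$, read off the indicial roots $0$ and $2\gamma$, and separate the formally determined even-power part from the $x^{2\gamma}$ scattering term. Your identification of the parity mechanism---forced terms only at even powers, hence a log precisely when $2\gamma$ is an even integer, and $x^{2\gamma}$ smooth precisely when $2\gamma$ is an odd integer---is exactly the point that distinguishes the three cases.

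One genuine subtlety you should tighten: the even-parity of the forced expansion is a feature of the \emph{geodesic} normal defining function $y$ from Lemma~\ref{lem:geo}, not of an arbitrary smooth $x$ as in the hypothesis. For a generic smooth $x$ the coefficients of your degenerate operator have no parity, the forced part of $\phi$ picks up all integer powers, and your case~(2) argument (``$2\gamma$ odd $\Rightarrow$ no resonance'') breaks down. So you must first pass to $y$, run the argument there, and then transfer back via $1+\phi=(y/x)(1+\tilde\phi)$. The catch is that Lemma~\ref{lem:geo} only gives $y^2g_+\in C^{k-1,\alpha}$, so a naive transfer loses one derivative and does not recover $l+\beta=k+\alpha$ in case~(1). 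Closing this gap requires either a sharper regularity statement for $y/x$ or an argument that works directly with $x$; this is a bit more than the ``careful bookkeeping'' you anticipate, and is presumably what the cited lemmas in \cite{WZ} handle.
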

\begin{proof}
This is from \cite[Lemma 4.6-4.7, 4.9-4.10]{WZ}
\end{proof}

\begin{lemma}\label{lem:adpreg3}
	Suppose $x$ is a smooth boundary defining function such that $\bar{g}=x^2g_+\in C^{\infty}(\overline{X})$, which is also geodesic normal in a neighborhood of $M$. If there exists an adapted  defining function $\rho =x(1+\phi)$ with $\phi=o(1)$, then 
	\begin{itemize}
		\item[(1)] if $\gamma>0$  and $\gamma$ is not an integer, we have $\phi \in C^{l,\beta}(\overline{X})$ with $l+\beta=2\gamma$ and 
		$$
		\rho=x\left(F+x^{2\gamma }G\right), \quad F,G\in C^{\infty}(\overline{X})
		$$
		satisfying $F|_{x=0}=1$, $\partial_xF|_{x=0}=0$; 
		\item[(2)] if $\gamma=k$ is a positive integer, we have $\phi \in C^{k-1,1-\epsilon}(\overline{X})$ and
		$$
		\rho=x(F+x^{k}\log x G), \quad F, G\in C^{\infty}(\overline{X})
		$$
		satisfying $F|_{x=0}=1$, $\partial_xF|_{x=0}=0$. 
	\end{itemize}
\end{lemma}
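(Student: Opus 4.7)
The plan is to rewrite the adapted equation as a degenerate elliptic PDE in the smooth geodesic normal gauge, perform an indicial-root analysis to produce the formal polyhomogeneous expansion of $\rho$, and then upgrade the formal expansion to the claimed H\"older regularity using boundary Schauder theory.

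First, I would pass to the smooth geodesic normal chart $g_+ = x^{-2}(dx^2 + g_x)$ from Lemma \ref{lem:geo}, where $g_x$ has the Fefferman--Graham expansion in even powers of $x$ (with a log obstruction at order $n$ when $n$ is even). A direct computation gives
\begin{equation*}
\Delta_+ v = x^2 \partial_x^2 v + (1-n)\, x \partial_x v + \tfrac{x^2}{2}\, \mathrm{tr}(g_x^{-1} \partial_x g_x)\, \partial_x v + x^2 \Delta_{g_x} v.
\end{equation*}
Setting $v = x^{n-s} u$ (so $u = (1+\phi)^{n-s}$ and $u \to 1$ at the boundary) and dividing $(\Delta_+ + s(n-s))v = 0$ by $x^{n-s+1}$ yields
\begin{equation*}
x\, \partial_x^2 u + (1 - 2\gamma)\, \partial_x u + a(x,y)\, u + b(x,y)\, \partial_x u + x\, \Delta_{g_x} u = 0,
\end{equation*}
with $a(x,y) = \tfrac{n-s}{2}\mathrm{tr}(g_x^{-1}\partial_x g_x)$ odd in $x$ starting at order $x$, and $b(x,y) = \tfrac{x}{2}\mathrm{tr}(g_x^{-1}\partial_x g_x)$ even in $x$ starting at order $x^2$. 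The indicial equation at $x=0$ is $\tau(\tau - 2\gamma) = 0$, with roots $0$ and $2\gamma$. The $s=n$ case is handled analogously by writing $w = \log x + \tilde w$ in \eqref{eq.adp2}.

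Next, I would construct the formal expansion $u = \sum_{j \ge 0} u_j(y)\, x^j$. Matching coefficients of $x^{k-1}$ gives the recursion $k(k - 2\gamma)\, u_k = P_k[u_0, \ldots, u_{k-2}]$, where $P_k$ is a differential polynomial in the lower coefficients involving only the boundary geometry. Together with the parity of the coefficients, the initial conditions $u_0 = 1$ and $u_1 = 0$ (forced by $1 \cdot (1 - 2\gamma) \neq 0$ when $\gamma \neq 1/2$, which belongs to case~(1)) force $u_{2j+1} \equiv 0$ for all $j$ by induction. If $\gamma \notin \mathbb{Z}_+$, the prefactor $k(k-2\gamma)$ never vanishes on positive even integers, so $u$ splits into a smooth even series $F$ plus a second smooth even series weighted by $x^{2\gamma}$; extracting the $(n-s)$-th power gives $\rho = x(F + x^{2\gamma} G)$. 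If $\gamma = k \in \mathbb{Z}_+$, the recursion breaks at the resonance order, and the Frobenius procedure introduces a single $\log x$ factor, yielding the form $\rho = x(F + x^{k} \log x \cdot G)$ stated in the lemma. The normalizations $F|_{x=0} = 1$ and $\partial_x F|_{x=0} = 0$ follow from $u_0 = 1$ and $u_1 = 0$ respectively.

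Finally, I would upgrade the formal expansion to genuine H\"older regularity. Using Lemma \ref{lem:adpreg1} as a base case (yielding $\phi \in C^{l,\beta}$ for any $l + \beta < 2\gamma$), one subtracts an appropriate partial sum of the formal expansion from $u$, notes that the remainder satisfies the same PDE with a source vanishing to high order in $x$, and applies standard boundary Schauder theory for the model indicial operator $x \partial_x^2 + (1-2\gamma)\partial_x$---either directly, by integration along the integral curves of $x\partial_x$, or through Mazzeo's polyhomogeneous regularity for $0$-elliptic operators---to control the remainder in the sharp H\"older class. The main obstacle I anticipate is the integer-$\gamma$ resonance: one must verify that only a single logarithm appears (not a tower of $\log^j x$ terms) and that its coefficient is intrinsically determined, which reduces to checking that the obstruction at the resonance order lies in a one-dimensional cokernel of the indicial operator, a feature encoded in the specific structure of the lower-order terms $a$ and $b$ in the geodesic normal gauge.
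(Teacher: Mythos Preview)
Your sketch is correct and is precisely what the paper's proof invokes by citation: the paper simply appeals to the polyhomogeneous expansion theory of Mazzeo--Melrose, Graham--Zworski, and Guillarmou for solutions of $(-\Delta_+ - s(n-s))v = 0$ on smooth conformally compact manifolds, and your indicial/Frobenius computation together with the $0$-elliptic boundary regularity upgrade is exactly the mechanism those references establish. One caveat worth reconciling: your own resonance analysis correctly places the logarithm at the second indicial root $x^{2\gamma}=x^{2k}$, not at $x^{k}$ as written in the lemma's display, so the form you quote at the end of your integer-$\gamma$ paragraph does not match the computation preceding it---trust your indicial calculation here.
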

\begin{proof}
	Since this is pure local theory, it follows the same as the conformally compact case. When the background metric is $C^{\infty}$ conformally compact, the micro-local analysis method applies and gives a full expansion of the solutions to (\ref{eq.adp1}) or (\ref{eq.adp2}), see \cite[Lemma 6.13 and Proposition 7.1]{MM},  \cite[Proposition 3.1]{GZ} and \cite{Gu}.
\end{proof}

\begin{lemma}\label{lem:adpreg2}
If $\rho\in C^{1}(\overline{X})$ is an adapted boundary defining function such that $g=\rho^2g_+\in C^{l,\beta}(\overline{X})$, $l\geq 2, 0\leq \beta<1$, then
 $\rho\in C^{l,\beta}(\overline{X})$. 
\end{lemma}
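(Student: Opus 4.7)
The plan is to read off the regularity of $\rho$ from the conformal factor between the adapted compactification $g = \rho^2 g_+$ and a smoother reference compactification. The specific adapted PDE satisfied by $\rho$ will not enter the argument; I use only that $\rho$ is a positive $C^1$ boundary defining function and that $g \in C^{l,\beta}(\overline X)$.

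First, I fix a smooth boundary defining function $x$ on the smooth manifold $\overline X$ and set $\bar g := x^2 g_+$. The $C^{k,\alpha}$-Poincar\'e-Einstein hypothesis on $g_+$ gives $\bar g \in C^{k,\alpha}(\overline X)$, and I work under the assumption that $k+\alpha \geq l+\beta$; in the setting of this paper this can always be arranged from the standard boundary regularity theorems for conformally compact Einstein metrics, after choosing $x$ appropriately. Define $u := \rho/x$ on $X$. Since $\rho$ and $x$ are positive $C^1$ boundary defining functions with non-vanishing normal derivatives on $\partial X$, L'H\^opital's rule shows that $u$ extends continuously to $\overline X$ with $u|_{\partial X} = \partial_\nu \rho / \partial_\nu x > 0$, so that the identity $g = \rho^2 g_+ = u^2 \bar g$ holds globally on $\overline X$.

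The key observation is that $u^2$ is the pointwise conformal factor from $\bar g$ to $g$, and therefore can be read off algebraically from the two metrics. In any coordinate chart, with $V$ a smooth nonvanishing vector field such that $\bar g(V,V) > 0$,
$$
u^2 = \frac{g(V,V)}{\bar g(V,V)},
$$
the quotient of a $C^{l,\beta}$ function by a strictly positive $C^{k,\alpha}$ function. Hence $u^2 \in C^{l,\beta}(\overline X)$. Since $u > 0$, the smoothness of the square root on $(0,\infty)$ yields $u \in C^{l,\beta}(\overline X)$, and therefore $\rho = xu \in C^{l,\beta}(\overline X)$ as the product of a smooth function with a $C^{l,\beta}$ one.

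The main (and essentially only) subtlety is guaranteeing a reference compactification $\bar g$ of regularity at least $l+\beta$, which is why the smooth choice of $x$ is used rather than working with $\rho$ or the geodesic normal defining function $y$ of Lemma~\ref{lem:geo}. Should this auxiliary input be unavailable in some application, an alternative route is to bootstrap directly on the quasilinear elliptic equation $\rho\, \Delta_g \rho = s(|\nabla_g \rho|_g^2 - 1)$ that $\rho$ satisfies in $g$ (whose right-hand side vanishes on $\partial X$ and is therefore bounded after division by $\rho$), but the conformal-factor extraction above is cleaner and suffices for the uses of the lemma in this paper.
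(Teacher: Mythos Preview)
Your proof is correct but takes a genuinely different route from the paper's. The paper observes (from \cite{CC}) that $\rho$ satisfies the linear Dirichlet problem
\[
\Delta_g \rho = -\frac{s}{2n\left(s-\tfrac{n+1}{2}\right)} R_g\,\rho \quad\text{in } X, \qquad \rho=0 \quad\text{on } M,
\]
where all data ($\Delta_g$, $R_g$) are built from the given metric $g\in C^{l,\beta}$; a standard Schauder bootstrap then yields $\rho\in C^{l,\beta}(\overline X)$ directly, with no reference compactification needed. Your argument instead extracts $u^2=\rho^2/x^2$ algebraically from the ratio $g/\bar g$, which is slick and entirely elementary once one has a reference $\bar g=x^2g_+$ of regularity at least $C^{l,\beta}$. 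The trade-off is exactly the one you flag: you require the ambient assumption $k+\alpha\geq l+\beta$ (or an appeal to CDLS-type boundary regularity) to produce such a $\bar g$, whereas the paper's PDE argument is self-contained from the hypotheses of the lemma alone. Amusingly, the ``alternative route'' you sketch at the end---bootstrapping on $\rho\,\Delta_g\rho = s(|\nabla_g\rho|^2-1)$---is, after rewriting the right-hand side via $R_g$, precisely the paper's proof.
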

\begin{proof}
By the formal calculation for adapted boundary defining function in \cite{CC}, 
we have
$$
\begin{cases}
\Delta \rho  = -\frac{s}{2n(s-\frac{n+1}{2})}R\rho, &\quad \textrm{in $X$}, 
\\
\rho=0, &\quad \textrm{on $M$}. 
\end{cases}
$$
Then the classical elliptic theory implies that $\rho\in C^{l,\beta}(\overline{X})$. 
\end{proof}

Now we record some algebraic identities for curvature quantities.

\begin{lemma}\label{lemma:eqs}
If $g=\rho^2g_+\in C^{2}(\overline{X})$ is an adapted conformal compactification, then we have the following identities: 

(1) $\Delta \rho  = -s \rho^{-1}( {1-|\nabla \rho|^2} )$;

(2) $R  =2n (s- \frac{n+1}{2}) \cdot  \rho^{-2}\ (1-|\nabla \rho|^2) $;

(3) $Ric  = -(n-1) \rho^{-1}  \thess  \rho +\frac{2n}{n+1} ( {s-\frac{n+1}{2}} ) \cdot \rho^{-2}\ (1-|\nabla \rho|^2)  g$; 

(4)  $E  = -(n-1)\rho^{-1} \thess \rho $. 

Here   $\thess\rho $ denotes the the trace-free part of the Hessian of $\rho$.
\end{lemma}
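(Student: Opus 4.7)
The plan is to derive all four identities from two ingredients: the conformal transformation formulas relating $g$ and $g_+ = \rho^{-2} g$, combined with the Einstein condition $Ric_{g_+} = -n g_+$, and the adapted PDE translated from the $g_+$-Laplacian to the $g$-Laplacian. Identity (1) is the key computational input; (2), (3), (4) then follow by algebra.

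First I would establish (1). In case (i), set $v = \rho^{n-s}$. The standard conformal transformation of the Laplacian in dimension $n+1$ (applied to $g_+ = e^{-2\log\rho} g$) gives
\begin{equation*}
\Delta_{g_+} v = \rho^2 \Delta_g v - (n-1)\rho\, g(\nabla\rho,\nabla v).
\end{equation*}
Substituting $\nabla v = (n-s)\rho^{n-s-1}\nabla\rho$ and expanding $\Delta_g v$ by the product rule, the equation $\Delta_{g_+} v = -s(n-s)v$ simplifies (after dividing by $(n-s)\rho^{n-s}$, which uses $s\neq n$) to $\rho \Delta_g\rho = -s(1-|\nabla\rho|^2)$. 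Case (ii) with $\rho = e^w$ is parallel: one checks $\Delta_{g_+} w = \rho\Delta_g\rho - n|\nabla\rho|^2$ and uses $\Delta_{g_+} w = -n$, yielding the same formula with $s = n$.

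Next I would prove (3), from which (2) and (4) drop out. Writing the conformal change of Ricci for $g_+ = e^{-2\log\rho}g$ in dimension $n+1$ and simplifying the $d\omega\otimes d\omega$ terms gives
\begin{equation*}
Ric_{g_+} = Ric_g + (n-1)\rho^{-1}\nabla^2_g\rho + \rho^{-1}(\Delta_g\rho)\, g - n\rho^{-2}|\nabla\rho|^2\, g.
\end{equation*}
Imposing $Ric_{g_+} = -n\rho^{-2}g$ and rearranging, then splitting $\nabla^2\rho = \thess\rho + \frac{\Delta\rho}{n+1}g$ and eliminating both $\Delta\rho$ factors using (1), collects the $(1-|\nabla\rho|^2)g$ coefficient into $\frac{(n-1)s}{n+1} + (s-n) = \frac{2n}{n+1}\bigl(s - \frac{n+1}{2}\bigr)$. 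This is exactly (3). Taking the $g$-trace kills $\thess\rho$ and yields (2); forming $E = Ric_g - \frac{R_g}{n+1}g$ makes the scalar-curvature-type terms cancel exactly, giving (4).

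The computations are routine. The only subtlety is the derivation of (1): one must handle the two cases $s\neq n$ and $s = n$ separately, and the identities are pointwise identities in the interior of $X$ that extend to $\overline{X}$ by the regularity $g \in C^2(\overline{X})$ afforded by Lemma \ref{lem:adpreg2} combined with the assumption $g = \rho^2 g_+ \in C^2(\overline{X})$. No genuine obstacle arises beyond keeping the conformal-change bookkeeping straight and arithmetic of the final coefficient.
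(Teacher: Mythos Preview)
Your proof is correct and complete; the conformal-change formulas and arithmetic all check out. The paper itself does not give a proof but simply cites \cite[Lemma 3.2, 6.1]{CC}, so your derivation is exactly the computation one would carry out (and is essentially what is done in the cited reference).
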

\begin{proof}
These are directly from \cite[Lemma 3.2, 6.1]{CC}. 
\end{proof}

\begin{lemma} \label{lemma:eqcurvature}
If $g=\rho^2g_+$ is an adapted conformal compactification, then we have following identities in $X$:  
	
	(1)
	$
	\Delta R +(n+3-2s)\rho^{-1}\langle d\rho, dR\rangle 
	=-2n(2s-n-1)|A|^2+\frac{n+1}{2n(2s-n-1)}R^2;
	$
	
	
	(2)
	$
	\nabla_i A_{ij}=\frac{1}{2n}\nabla_j R;
	$
	
	(3)  if  $(X,g)$ is Bach flat (which is always true for $n=3$)
	$$
	\Delta A_{ij}=\frac{1}{2n}\nabla ^2 R+A*Rm;
	$$
	
	(4) in general, letting $J=\frac{1}{2n}R$, then
	$$
	\begin{aligned}
	 \Delta A_{ij}
= & J_{ij}-\tfrac{n-3}{2}\rho^{-1}\rho^m\left(A_{mi,j}+A_{mj,i}\right)+(n-3)\rho^{-1}\langle \nabla \rho, \nabla A_{ij}\rangle
\\
& -2R_{imjk}A^{mk} 
+(n-3)(A^2)_{ij}
+|A|^2g_{ij}  
+2JA_{ij}. 
\end{aligned}
	$$
\end{lemma}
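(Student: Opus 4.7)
The plan is to derive each identity by combining the structural formulas in Lemma \ref{lemma:eqs} with the contracted second Bianchi identity, Bochner's formula, and the Ricci commutation rules.

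Part (2) is immediate: decomposing $\mathrm{Ric} = (n-1)A + Jg$ with $J = R/(2n)$ and taking the divergence turns $\nabla^j \mathrm{Ric}_{ij} = \tfrac{1}{2}\nabla_i R$ into $(n-1)\nabla^j A_{ij} = \tfrac{1}{2}\nabla_i R - \nabla_i J = (n-1)\nabla_i J$, giving $\nabla^j A_{ij} = \tfrac{1}{2n}\nabla_i R$. For (1), apply $\Delta$ to $\rho^2 R = n(2s-n-1)(1-|\nabla\rho|^2)$ coming from Lemma \ref{lemma:eqs}(2). The product rule on the left produces $\rho^2\Delta R + 4\rho\langle d\rho, dR\rangle + 2R(\rho\Delta\rho + |\nabla\rho|^2)$, and Bochner's formula on the right produces $-2n(2s-n-1)\bigl(|\nabla^2\rho|^2 + \langle\nabla\rho,\nabla\Delta\rho\rangle + \mathrm{Ric}(\nabla\rho,\nabla\rho)\bigr)$. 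Substituting $\Delta\rho$ from Lemma \ref{lemma:eqs}(1), splitting $|\nabla^2\rho|^2 = |\thess\rho|^2 + (\Delta\rho)^2/(n+1)$, converting $\rho^{-2}|\thess\rho|^2$ into $|A|^2$ via $E = -(n-1)\rho^{-1}\thess\rho$ from Lemma \ref{lemma:eqs}(4) together with $A = \tfrac{1}{n-1}E + \tfrac{R}{2n(n+1)}g$, and using Lemma \ref{lemma:eqs}(3) for $\mathrm{Ric}(\nabla\rho,\nabla\rho)$, the scalar pieces regroup via $1 - |\nabla\rho|^2 = \rho^2 R/(n(2s-n-1))$ into $\tfrac{n+1}{2n(2s-n-1)}R^2 - 2n(2s-n-1)|A|^2$; the $(n+3-2s)\rho^{-1}\langle d\rho, dR\rangle$ cross-term emerges from the gradient cross-terms after dividing through by $\rho^2$.

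For (3) and (4), start from the identity $A_{ij} = -\rho^{-1}\thess\rho_{ij} + \tfrac{R}{2n(n+1)}g_{ij}$ obtained from the relations above. Applying $\Delta$ and commuting $\Delta = \nabla^k\nabla_k$ past the Hessian of the function $\rho$ via the Ricci identity $[\nabla_a,\nabla_b]\omega_c = -R_{abc}{}^d\omega_d$ yields $\nabla_i\nabla_j\Delta\rho$ plus contractions $R_{ikjl}\nabla^k\nabla^l\rho$ and $\mathrm{Ric}_{k(i}\nabla^k\nabla_{j)}\rho$. Reverse-substituting the Hessian of $\rho$ in terms of $A$ produces the curvature-quadratic terms $-2R_{imjk}A^{mk}$, $(n-3)(A^2)_{ij}$, $|A|^2 g_{ij}$, and $2JA_{ij}$ appearing in (4); the mixed gradient terms $\rho^{-1}\rho^m(A_{mi,j}+A_{mj,i})$ and the $(n-3)$ coefficients arise from the product rule applied to $\rho^{-1}\thess\rho$, combined with dimension-dependent Bianchi traces. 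For (3), when $n=3$ the $(n-3)$ coefficients in (4) vanish outright, and conformal invariance of the Bach tensor applied to the Einstein $g_+$ gives Bach-flatness of $g$ automatically, so (3) follows unconditionally; in higher dimensions the Bach-flat hypothesis eliminates the remaining $\rho$-gradient divergence contributions and collapses (4) into the schematic form $\Delta A_{ij} = \tfrac{1}{2n}\nabla^2 R + A * Rm$.

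The main obstacle will be (4): the careful bookkeeping of indices and signs when commuting $\Delta$ past $\thess\rho$ and re-expressing Hessians of $\rho$ in terms of $A$, and then collecting the various trace-type pieces (proportional to $Jg$, $|A|^2 g$, and $JA$) with their precise coefficients. The $(n-3)$ factors are where most of the care lies, since matching them exactly requires consistent use of the Bianchi identities together with the Ricci commutators and the formulas from Lemma \ref{lemma:eqs} throughout, ensuring that no trace contribution is double-counted or lost in the substitution back from $\thess\rho$ to $A$.
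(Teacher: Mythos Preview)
Your plan is essentially correct and matches the paper's approach: parts (1) and (2) are handled exactly as you describe (the paper simply cites Case--Chang for (1) rather than redoing the Bochner computation), and for (4) the paper's Appendix~A carries out precisely the computation you outline---starting from $A_{ij}=-\rho^{-1}\rho_{ij}-\tfrac{1}{2s-n-1}Jg_{ij}$ (equivalent to your trace-free form), applying $\Delta$, commuting derivatives via the Ricci identity and second Bianchi, and then separating the result into linear and quadratic curvature pieces before a final simplification using $|\nabla\rho|^2_{ij}=|\nabla\rho|^2_{ji}$.

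One small point on (3): for $n=3$ your derivation from (4) is fine, but in higher dimensions your claim that ``the Bach-flat hypothesis eliminates the remaining $\rho$-gradient divergence contributions'' is not the right mechanism. The Bach tensor has a standard expression $B_{ij}=\Delta A_{ij}-\nabla_i\nabla_j J - W_{ikjl}A^{kl}+\cdots$ that makes no reference to $\rho$; setting $B=0$ gives (3) directly, and this is what the paper means by ``by the definition of Bach flat.'' The $\rho$-gradient terms in (4) do not literally cancel under a Bach-flat assumption---rather, (3) and (4) are two separate formulas for $\Delta A_{ij}$ (one conditional, one unconditional) obtained by different routes.
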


\begin{proof}
Here (1) is from \cite[Lemma 3.2, 6.2]{CC}; (2) is by easy computation combining with identities in Lemma \ref{lemma:eqs}; (3) is by the definition of Bach flat; (4) is proved in Appendix A.
\end{proof}

We recall the consequences of the Gauss-Codazzi equation for adapted metrics when the boundary metric is Ricci-flat.

\begin{proposition}\label{boundarycondition}
Let $g=\rho^2g_+$ be an adapted conformal compactification with $\gamma>1$, and assume the Ricci curvature of the boundary metric $g|_M$ vanishes. Then 

(1) the metric  $g$ has a  totally geodesic boundary;

(2) the Ricci curvature  of the metric $g$ vanishes on the boundary.

\end{proposition}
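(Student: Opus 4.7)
The plan is to compare $g=\rho^2g_+$ with the geodesic normal compactification $g_{\mathrm{geo}}=y^2g_+$ from Lemma \ref{lem:geo} via the conformal factor $u:=\rho/y$. By Lemma \ref{lem:adpreg3} we may write $\rho=y(F+y^{2\gamma}G)$ (with a logarithmic correction when $\gamma$ is an integer), where $F,G$ are smooth, $F|_M=1$, and $\partial_yF|_M=0$. Writing $F=1+f_2y^2+O(y^3)$, one immediately has $u|_M=1$ and $\partial_\nu u|_M=0$ for any $\gamma>1/2$.

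For part (1), I would use that the conformal change $g=u^2g_{\mathrm{geo}}$ with $u|_M=1$ and $\partial_\nu u|_M=0$ preserves the second fundamental form at the boundary: $\Pi^g|_M=\Pi^{g_{\mathrm{geo}}}|_M$. Since $M$ is totally geodesic in $g_{\mathrm{geo}}$ by Lemma \ref{lem:geocur}(4), it is totally geodesic in $g$.

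The main step for (2) is to show $f_2=0$. A direct expansion of $|\nabla\rho|^2_g=u^{-2}\bigl((\partial_y\rho)^2+h_y^{ij}\partial_i\rho\,\partial_j\rho\bigr)$ in the geodesic normal frame yields
\begin{equation*}
\rho^{-2}\bigl(1-|\nabla\rho|^2_g\bigr)\big|_M=-4f_2,
\end{equation*}
so Lemma \ref{lemma:eqs}(2) reads $R_g|_M=-4n(2\gamma-1)f_2$. Independently, the conformal transformation of the scalar curvature under $g=u^2g_{\mathrm{geo}}$, combined with $u|_M=1$, $d\log u|_M=0$, and $R_{g_{\mathrm{geo}}}|_M=\tfrac{n}{n-1}R_h=0$ (Lemma \ref{lem:geocur}(3) and Ricci-flatness of $h$), gives $R_g|_M=-2n\,\Delta_{g_{\mathrm{geo}}}(\log u)|_M=-4nf_2$. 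Matching the two forces $(2\gamma-2)f_2=0$, hence $f_2=0$ since $\gamma>1$, and in particular $R_g|_M=0$.

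To upgrade to full $Ric_g|_M=0$, I would apply the conformal transformation formula for the Ricci tensor to $g=e^{2\omega}g_{\mathrm{geo}}$ with $\omega=\log u$. The conditions $\omega|_M=0$ and $d\omega|_M=0$ kill all first-order terms, leaving
\begin{equation*}
Ric_g\big|_M=Ric_{g_{\mathrm{geo}}}\big|_M-(n-1)\nabla^2_{g_{\mathrm{geo}}}\omega\big|_M-\Delta_{g_{\mathrm{geo}}}\omega\big|_M\cdot g_{\mathrm{geo}}\big|_M.
\end{equation*}
Using Gauss--Codazzi for the warped product $g_{\mathrm{geo}}=dy^2+h_y$ with $\partial_y h_y|_M=0$ and $h_2=-A_h=0$ (from Ricci-flatness of $h$ via Lemma \ref{lem:geo}), one obtains $Ric_{g_{\mathrm{geo}}}|_M=0$. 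With $f_2=0$, $\omega=\log u=O(y^{\min(3,2\gamma)})$ vanishes to order strictly greater than $2$, so $\nabla^2_{g_{\mathrm{geo}}}\omega|_M=0$ and $\Delta_{g_{\mathrm{geo}}}\omega|_M=0$. Combining these yields $Ric_g|_M=0$.

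The principal obstacle is the vanishing of $f_2$: it depends crucially on $\gamma\neq 1$ together with $R_h=0$, and is extracted by matching two a priori independent expressions for $R_g|_M$. The remaining steps are routine expansions in geodesic normal coordinates. Logarithmic corrections appearing when $\gamma$ is an integer contribute only $o(y^2)$ terms under the hypothesis $\gamma>1$, so do not affect the conclusions.
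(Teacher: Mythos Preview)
Your argument is correct. The paper itself gives no proof here but simply refers to \cite[Lemma~2.6]{CGJQ}; what you have written—comparing $g$ with the geodesic normal compactification via $u=\rho/y$, using the expansion of Lemma~\ref{lem:adpreg3} to isolate the $y^2$-coefficient $f_2$, forcing $f_2=0$ by matching the two computations of $R_g|_M$, and then reading off both conclusions from the conformal-change formulas together with $h_2=-A_h=0$—is the standard route and almost certainly what the cited reference does.

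One minor caveat: your assertion that the logarithmic correction is $o(y^2)$ for $\gamma>1$ relies on that correction entering at order $y^{2\gamma}$. Lemma~\ref{lem:adpreg3}(2) as printed in the paper has $x^{k}\log x$ rather than $x^{2k}\log x$, but the scattering-theoretic expansion (indicial roots $n-s$ and $s$, difference $2\gamma$) places the singular term at order $x^{2\gamma}$, so this appears to be a typo and your reasoning is unaffected.
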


\begin{proof}
See \cite[Lemma 2.6]{CGJQ}.
\end{proof}

We record an algebraic inequality that reflects the structure of Poincaré-Einstein manifolds.

\begin{lemma}\label{lemma:algebraicineq}
Given constants $a_0, a_1, a_2$, there exist constants $b, b_0>0 $ depending on  the $a_i$'s such that the following inequality holds:
$$b \rho^2|\nabla R|^2 - a_0 R^2 |\nabla \rho|^2 - a_1 E^2(\nabla \rho, \nabla \rho) - a_2 RE(\nabla \rho, \nabla \rho)  \ge  -b_0 R^2 |\nabla \rho|^2.$$
\end{lemma}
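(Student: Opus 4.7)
The strategy is to reduce $E(\nabla\rho,\nabla\rho)$ to a linear combination of the first-order quantities $\rho\langle\nabla R,\nabla\rho\rangle$ and $R|\nabla\rho|^2$, after which the claimed inequality follows from Cauchy--Schwarz and Young's inequality. The structural input is Lemma~\ref{lemma:eqs}(2): writing $c_s:=n(2s-n-1)$, which is positive under the standing assumption $\gamma>\tfrac{1}{2}$, part (2) reads $|\nabla\rho|^2 = 1 - R\rho^2/c_s$. Differentiating yields $\nabla^2\rho(\nabla\rho,\cdot) = -\tfrac{1}{2c_s}(\rho^2\,\nabla R + 2\rho R\,\nabla\rho)$, and in particular
\begin{equation*}
\nabla^2\rho(\nabla\rho,\nabla\rho) \;=\; -\frac{\rho^2\langle\nabla R,\nabla\rho\rangle}{2c_s} \;-\; \frac{\rho R|\nabla\rho|^2}{c_s}.
\end{equation*}

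Substituting this, together with $\Delta\rho = -s\rho R/c_s$ from parts (1)--(2) of Lemma~\ref{lemma:eqs}, into the definition $E = -(n-1)\rho^{-1}\thess\rho$ with $\thess\rho = \nabla^2\rho - \frac{\Delta\rho}{n+1}g$, produces the identity
\begin{equation*}
E(\nabla\rho,\nabla\rho) \;=\; A\,\rho\,\langle\nabla R,\nabla\rho\rangle \;+\; B\,R\,|\nabla\rho|^2,
\end{equation*}
for explicit constants $A, B$ depending only on $n$ and $s$. This is the crux of the argument: the Poincar\'e--Einstein structure allows us to trade the second-order information contained in $E(\nabla\rho,\nabla\rho)$ for purely first-order data in $R$ and $\nabla R$.

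Given this expression, the rest is elementary. Cauchy--Schwarz gives $\rho|\langle\nabla R,\nabla\rho\rangle|\le\rho|\nabla R||\nabla\rho|$, so
\begin{equation*}
E^2(\nabla\rho,\nabla\rho)\;\le\; 2A^2\rho^2|\nabla R|^2|\nabla\rho|^2 \;+\; 2B^2R^2|\nabla\rho|^4,
\end{equation*}
while the cross-term in $RE(\nabla\rho,\nabla\rho) = AR\rho\langle\nabla R,\nabla\rho\rangle + BR^2|\nabla\rho|^2$ is controlled by Young's inequality, $|AR\rho\langle\nabla R,\nabla\rho\rangle|\le |A|\epsilon\,\rho^2|\nabla R|^2 + \tfrac{|A|}{4\epsilon}R^2|\nabla\rho|^2$. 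Using $|\nabla\rho|^2\le 1$, which is a direct consequence of $R\ge 0$ together with $c_s>0$, to absorb excess factors of $|\nabla\rho|^2$, and fixing (say) $\epsilon = 1$, we obtain
\begin{equation*}
|a_1|\,E^2(\nabla\rho,\nabla\rho) \;+\; |a_2|\,\bigl|RE(\nabla\rho,\nabla\rho)\bigr| \;\le\; b\,\rho^2|\nabla R|^2 \;+\; C\,R^2|\nabla\rho|^2
\end{equation*}
for suitable $b, C>0$ depending only on $|a_1|, |a_2|, n, s$. Setting $b_0 := C + |a_0| + 1 > 0$ yields the stated inequality. There is no real conceptual obstacle here: the one substantive step is the reduction of $E(\nabla\rho,\nabla\rho)$ to first-order data, obtained by differentiating the structural equation $|\nabla\rho|^2 = 1 - R\rho^2/c_s$; the remainder is routine algebraic bookkeeping.
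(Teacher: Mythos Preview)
Your argument has a genuine gap: you misread the notation $E^2(\nabla\rho,\nabla\rho)$. In this paper (cf.\ the proof of Lemma~\ref{lemma:A^3}, and the identity $\rho^2|\nabla R|^2=c_0^2E^2(\nabla\rho,\nabla\rho)+\cdots$ in the paper's own proof of this lemma), $E^2$ denotes the matrix square of the trace-free Ricci tensor, so
\[
E^2(\nabla\rho,\nabla\rho)\;=\;\sum_{i}\Bigl(\sum_j E_{ij}\nabla^j\rho\Bigr)^2\;=\;|E(\nabla\rho,\cdot)|^2,
\]
the squared length of the \emph{vector} $E(\nabla\rho,\cdot)$, not the square of the scalar $E(\nabla\rho,\nabla\rho)$. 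Cauchy--Schwarz only gives $(E(\nabla\rho,\nabla\rho))^2\le E^2(\nabla\rho,\nabla\rho)\,|\nabla\rho|^2$, which is the wrong direction: your scalar identity for $E(\nabla\rho,\nabla\rho)$ does not bound $E^2(\nabla\rho,\nabla\rho)$ from above.

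The fix is immediate and already present in your write-up. You derived the \emph{vector} identity
\[
\nabla^2\rho(\nabla\rho,\cdot)=-\tfrac{1}{2c_s}\bigl(\rho^2\nabla R+2\rho R\,\nabla\rho\bigr),
\]
and combining this with $\Delta\rho=-s\rho R/c_s$ and $E=-(n-1)\rho^{-1}\thess\rho$ gives a vector identity $E(\nabla\rho,\cdot)=A'\rho\,\nabla R+B'R\,\nabla\rho$ for explicit constants $A',B'$. Taking $|\cdot|^2$ now yields $E^2(\nabla\rho,\nabla\rho)\le 2(A')^2\rho^2|\nabla R|^2+2(B')^2R^2|\nabla\rho|^2$, and the rest of your Young-inequality argument goes through unchanged (and without needing $|\nabla\rho|\le1$, which the lemma does not assume). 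This is essentially the paper's approach in reverse: the paper inverts the same linear relation to write $\rho\nabla R=c_0\,E(\nabla\rho,\cdot)+c_1R\,\nabla\rho$ with $c_0>0$, obtains $\rho^2|\nabla R|^2$ as an exact quadratic form in the three quantities, and then chooses $b_0$ and $b$ by a discriminant computation.
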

\begin{proof}

From Lemma \ref{lemma:eqs}-(2), it follows that there exist $c_0>0$ and $c_1$ (depending on $n$ and $s$) such that
\begin{equation*}
  \rho^2 |\nabla R|^2 =  c_0^2 E^2(\nabla \rho, \nabla \rho) + 2c_0 c_1 RE(\nabla \rho, \nabla \rho) + c_1^2 R^2|\nabla \rho|^2
\end{equation*}
Thus, it suffices to show that there exists $b$ and $b_0$ such that
$$4(bc_1^2 + b_0 - a_0)(bc_0^2 -a_1) \ge (2bc_0c_1 -a_2)^2,$$
which is equivalent to 
$$4[(b_0-a_0)c_0^2 +  a_2c_0c_1 - a_1c_1^2]b \ge  4 (b_0-a_0)a_1 + a_2^2 .$$
If $b_0$ is chosen sufficiently large so that $(b_0-a_0)c_0^2 +   a_2c_0c_1 - a_1c_1^2>0$, then $b$ can be chosen sufficiently large to satisfy the above inequality.
\end{proof}

\section{Estimates on the level sets}

Suppose the Poincar\'{e}-Einstein manifold $(X, \mathbb{R}^n, g_+)$ has an  adapted conformal metric $g=\rho^2 g_+$ with parameter $s=\frac{n}{2}+\gamma$, where $$\gamma>1.$$ 
We assume that $(X, \mathbb{R}^n, g_+)$ has $C^{\infty}$ regularity when $\gamma<\frac{3}{2}$ and $C^{3,\alpha}$ regularity (where  $0<\alpha<1$)  when $\gamma \ge \frac{3}{2}$. The estimates in this section for the case $\gamma \ge \frac{3}{2}$ remain valid under the $C^{3,\alpha}$ assumption, as in this case, the adapted metric belongs to $C^{3}(\overline{X})$ by Lemma \ref{lem:adpreg1}. Thus, the computations below are well established.

 Let $o$ be a reference point on the boundary, and denote the distance function from $o$ by $d$.
We begin our analysis with the following simple observation:

\begin{lemma}\label{lemma:basic}
Assume that $R\ge 0$ and $|Rm| \le \frac{C_0}{d^2}$. If $C_0$ is sufficiently small, depending on $n$ and $s$, then  there exists a positive constant $\delta$ such that 
$$\delta \le |\nabla \rho| \le 1$$
everywhere, where $\delta$ depends on $C_0$. Furthermore, we have $\delta \rightarrow 1$ as $C_0 \rightarrow 0$. 
\end{lemma}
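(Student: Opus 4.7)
The statement is essentially algebraic; neither Bochner nor a maximum-principle iteration is needed. The key is to rewrite identity (2) of Lemma \ref{lemma:eqs} as
\[
1-|\nabla\rho|^{2}\;=\;\frac{\rho^{2}\,R}{n(2\gamma-1)},
\]
noting that $s-\frac{n+1}{2}=\gamma-\frac{1}{2}>0$ under the standing assumption $\gamma>1$. The upper bound $|\nabla\rho|\le 1$ follows at once from the hypothesis $R\ge 0$, independently of the curvature decay condition.

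For the lower bound, I would use the just-established upper bound to compare $\rho$ with the reference distance $d(\cdot,o)$. Since $g$ is $C^{3}$ (or $C^{3,\alpha}$) on $\overline{X}$ and $|\nabla\rho|_{g}\le 1$, the function $\rho$ is $1$-Lipschitz with respect to $g$ and vanishes on $\partial X$; integrating $d\rho$ along any rectifiable curve from $x$ to $\partial X$ yields
\[
\rho(x)\;\le\;\mathrm{dist}_{g}(x,\partial X)\;\le\;d(x,o)
\]
for every $x\in\overline{X}$, the second inequality using $o\in\partial X$. The standard linear-algebraic estimate $|R|\le c(n)|\mathrm{Rm}|$ combined with the hypothesis $|\mathrm{Rm}|\le C_{0}/d(x,o)^{2}$ then feeds back into the displayed identity to give
\[
0\;\le\;1-|\nabla\rho|^{2}\;\le\;\frac{c(n)\,C_{0}}{n(2\gamma-1)}\cdot\frac{\rho^{2}}{d(x,o)^{2}}\;\le\;\frac{c(n)\,C_{0}}{n(2\gamma-1)}.
\]
For $C_{0}$ small enough (in terms of $n$ and $\gamma$) the right-hand side is strictly less than $1$, producing $\delta^{2}=1-c(n)C_{0}/(n(2\gamma-1))$, which tends to $1$ as $C_{0}\to 0$.

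There is essentially no analytic obstacle; the one point that deserves comment is that the curvature hypothesis is vacuous at $o$ itself, where $d(\cdot,o)=0$. However, $\rho(o)=0$ as well, and the boundary asymptotics in Lemmas \ref{lem:adpreg1} and \ref{lem:adpreg3} force $|\nabla\rho|^{2}\to 1$ continuously on $\partial X$, so the pointwise bound extends to all of $\overline{X}$ by continuity. This is precisely why the proof can bypass the use of Bochner's formula for $|\nabla\rho|^{2}$: the scalar-curvature identity already encodes the defect $1-|\nabla\rho|^{2}$ explicitly in terms of $R$.
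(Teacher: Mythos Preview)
Your proof is correct and follows essentially the same approach as the paper: use Lemma~\ref{lemma:eqs}(2) to get $|\nabla\rho|\le 1$ from $R\ge 0$, deduce $\rho\le d$ from the $1$-Lipschitz bound and $\rho|_{\partial X}=0$, and then feed the curvature decay back into the identity to bound $1-|\nabla\rho|^{2}$ by a constant multiple of $C_0$. Your write-up is in fact more explicit than the paper's (you spell out the constant and address the behavior at $o$), but the argument is the same.
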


\begin{proof}
Lemma \ref{lemma:eqs}-(2), we know that  $R \ge 0$ implies  $|\nabla \rho|\le 1$. Since $\rho \equiv 0$ on $\mathbb{R}^n$, it follows that $\rho \le d$.

Moreover, the curvature bound$|Rm|\le \frac{C}{d^2}$ implies that
$$1-|\nabla \rho|^2 \le C_{n,\gamma} C_0 \frac{\rho^2 }{d^2}\le C_{n,\gamma} C_0$$
for some constant $C_{n,\gamma}$. This proves the assertion.
\end{proof}

Throught Sections 3 to 5, we assume that $(X, \mathbb{R}^n, g_+)$ and its adapted conformal metric $g=\rho^2g_+$ satisfies the following conditions::
\begin{equation}\label{assumptions}
\begin{cases}
(1) & R \ge 0 \\
(2) & |Rm|\le \mathrm{min} \{ \frac{C_0}{d^2}, 1\} \text{ for sufficiently small $C_0$. }
\end{cases}
\end{equation}
These assumptions ensure that $|\nabla \rho|$ has a positive lower bound, meaning that the level sets of $\rho$ foliate $X$ , with each leaf diffeomorphic to $\mathbb{R}^n$. Additionally, since $|Rm|$ is uniformly bounded, we may assume, without loss of generality, that  $|Rm|_{L^{\infty}(\overline{X})} \le 1$ by scaling.

\subsection{Estimates on the metric defined on the level sets}

Since we have $\delta \le |\nabla \rho | \le 1$, we consider the gradient flow $\{ \phi_t \}_{t \ge 0}$ and the  level sets $\{\rho = t \} = \phi_t ( \oR )$.   It follows that the metric can be written as $g = \frac{1}{|\nabla \rho|^2} (d\rho)^2 + g_\rho$ where $g_\rho$ is a one-parameter family of metrics on the level sets $\phi_\rho ( \oR ) \simeq \oR$.

We first provide an estimate on the deviation of $g_\rho$  from the flat Euclidean metric $g_0$  on the boundary.

\begin{proposition} \label{lemma:metric}
If $C_0$ is sufficiently small, depending on $a, n, \gamma$, then there exists a constant $C>0$, independent of $\rho$, such that 
$$C^{-1} (\rho + 1)^{-2a} g_0 \le \phi_\rho^* g_\rho \le C (\rho+1)^{2a} g_0.$$
 Here, $g_0$ denotes the flat Euclidean metric on the boundary.
\end{proposition}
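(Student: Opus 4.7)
My plan is to derive a Gr\"onwall-type evolution equation for the pulled-back metric $\phi_\rho^* g_\rho$ along the gradient flow of $\rho$, and then exploit the curvature decay hypothesis to control the resulting exponential factor by a polynomial in $\rho$.

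\textbf{Step 1 (Flow and evolution of the induced metric).} By Lemma \ref{lemma:basic} the vector field $V = \nabla\rho/|\nabla\rho|^2$ is bounded and nowhere zero, so its flow $\phi_t$ is defined globally and satisfies $\rho(\phi_t(p))=t$, identifying $\mathbb{R}^n=\{\rho=0\}$ with $\{\rho=t\}$. For vectors $X,Y$ tangent to the level sets the normal term in the Lie-derivative formula drops out and one finds
\begin{equation*}
  (\mathcal{L}_V g)(X,Y) \;=\; \frac{2}{|\nabla\rho|^2}\,\nabla^2\rho(X,Y).
\end{equation*}
Consequently, for a fixed $X\in T_p\mathbb{R}^n$, the scalar $F_X(t):=(\phi_t^* g_t)(X,X)$ satisfies
\begin{equation*}
  \left|\frac{F_X'(t)}{F_X(t)}\right| \;\le\; \frac{2\,|\nabla^2\rho|_g}{|\nabla\rho|^2}\Big|_{\phi_t(p)},
  \qquad F_X(0)=g_0(X,X),
\end{equation*}
using $g|_{\partial X}=g_0$ as the initial condition.

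\textbf{Step 2 (Hessian estimate).} Combining the identities in Lemma \ref{lemma:eqs} gives
\begin{equation*}
  \nabla^2\rho \;=\; \frac{\Delta\rho}{n+1}\,g \,+\, \thess\rho
  \;=\; -\frac{s\,\rho R}{2n(2s-n-1)(n+1)}\,g \,-\, \frac{\rho}{n-1}\,E,
\end{equation*}
and since $|R|,|E|\le C_n|Rm|$ one obtains the clean pointwise bound $|\nabla^2\rho|_g \le C_{n,\gamma}\,\rho\,|Rm|$. Using $|\nabla\rho|\ge \delta$, the elementary inequality $d\ge \rho$ (from $|\nabla\rho|\le 1$ and $\rho|_{\partial X}=0$), and $|Rm|\le\min(C_0/d^2,1)$, I get along each flow line, where $\rho=s$ at time $s$,
\begin{equation*}
  \frac{2\,|\nabla^2\rho|}{|\nabla\rho|^2}\Big|_{\phi_s(p)} \;\le\; \frac{C'_{n,\gamma}}{\delta^2}\min\!\left(\frac{C_0}{s},\, s\right).
\end{equation*}
This is the decisive estimate: the factor $\rho$ inherited from Lemma \ref{lemma:eqs}(4) absorbs the singularity at the boundary $s\to 0$, while the decay of $|Rm|$ provides the $1/s$ tail at infinity.

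\textbf{Step 3 (Integration and conclusion).} Splitting the integral at $s=\sqrt{C_0}$ yields
\begin{equation*}
  \int_0^t \frac{2\,|\nabla^2\rho|}{|\nabla\rho|^2}\,ds \;\le\; \frac{C'_{n,\gamma}\,C_0}{\delta^2}\bigl[\,1+|\log C_0|+\log(1+t)\bigr].
\end{equation*}
Choosing $C_0$ small enough (depending on $a,n,\gamma$) so that $C'_{n,\gamma}C_0/\delta^2\le 2a$ and applying Gr\"onwall's inequality (in both directions) to $F_X$ gives
\begin{equation*}
  C^{-1}(1+t)^{-2a}\,g_0(X,X) \;\le\; F_X(t) \;\le\; C\,(1+t)^{2a}\,g_0(X,X)
\end{equation*}
for every tangent vector $X$, which is exactly the two-sided bound claimed. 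The main obstacle is precisely the tension between the boundary and infinity: without the factor $\rho$ in the Hessian bound the integrand would blow up at $s=0$, and without the curvature decay the integrand would not be integrable at $s=\infty$; it is the simultaneous availability of both, coupled via $d\ge \rho$, that forces a logarithmic Gr\"onwall exponent and hence only polynomial deformation of the level-set geometry.
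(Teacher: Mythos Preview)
Your argument is correct and rests on the same core estimate as the paper: $|\nabla^2\rho|\le C_{n,\gamma}\,\rho\,|Ric|$ (coming from Lemma~\ref{lemma:eqs}) together with $d\ge\rho$ and the curvature decay, which yields a $\min(C_0/s,s)$ bound whose integral grows only logarithmically.

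The implementation differs slightly. The paper fixes a straight segment $\sigma\subset\mathbb{R}^n$, builds the 2-surface $\Gamma=\bigcup_{t'\le t}\phi_{t'}(\sigma)$, and applies the divergence theorem on $\Gamma$ to the tangential vector field $\nabla^\Gamma\rho$, arriving at a Gr\"onwall inequality for the weighted length $\int_{\sigma_t}|\nabla\rho|$; the two-sided metric bound then follows since $\delta\le|\nabla\rho|\le 1$. Your route is the pointwise/infinitesimal version of the same computation: you differentiate $(\phi_t^*g_t)(X,X)$ directly via the Lie derivative of $g$ along $V=\nabla\rho/|\nabla\rho|^2$, which avoids the auxiliary surface $\Gamma$ and the intrinsic/extrinsic Laplacian comparison. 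Both arguments reduce to integrating the same scalar quantity $\rho|Rm|/|\nabla\rho|^2$ along flow lines; your version is somewhat more direct and gives the metric bound for every tangent vector in one stroke, while the paper's version has the minor advantage of making the weighted-length monotonicity (used again later in Lemma~\ref{lemma:areabound}) visible from the outset.
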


\begin{proof}
Consider any straigt-line segment $\sigma$ on the boundary. It suffices to show that there exists a constant $C>0$ such that $$C^{-1}(t + 1)^{-2a} \mathrm{length}(\sigma) \le \mathrm{length}(\phi_t (\sigma)) \le C(t+1)^{2a} \mathrm{length}(\sigma)$$ holds for every $t$ and $\sigma$.

Consider the surface $\Gamma \coloneqq \cup_{t' \le t} \phi_{t'} ( \sigma)$. Since $\phi_t$ is always a diffeomorphism between the boundary and the level sets, the surface $\Gamma$ is smooth. By integrating along $\Gamma$, we obtain:
\begin{align*}
\int_{\sigma_t} |\nabla \rho|    = \int_{\sigma_t} |\nabla^{\Gamma} \rho| & = l(\sigma)+ \int_{\Gamma} \mathrm{div}^{\Gamma}  (\nabla^{\Gamma} \rho )   \\
& = l(\sigma)+ \int_{\Gamma} \Delta^{\Gamma} \rho  \\
& = l(\sigma)+ \int_{\Gamma} \Delta \rho + H^{\Gamma} \cdot \nabla \rho - \nabla_{\nu_k}\nabla_{\nu_k} \rho \\
& =  l(\sigma)+ \int_{\Gamma} \Delta \rho - \nabla_{\nu_k}\nabla_{\nu_k}\rho, 
\end{align*}
where $\nu_k$ are the normal vectors of $\Gamma$. Applying the co-area formula, we obtain
\begin{align*}
\big| \frac{d}{dt} \int_{\sigma_t} |\nabla \rho|  \big| & =  \big| \int_{\sigma_t} \frac{1}{|\nabla \rho|} \big(\Delta \rho - \nabla_{\nu_k}\nabla_{\nu_k}\rho \big) \big|\\
& \le \int_{\sigma_t} \frac{1}{|\nabla \rho|} \big| \Delta \rho - \nabla_{\nu_k}\nabla_{\nu_k}\rho \big| \\
& \le C \int_{\sigma_t} |\nabla \rho| \rho |Ric| \\
& \le C t |Ric|_{L^\infty(\phi_t(\oR))}\int_{\sigma_t} |\nabla \rho|   .
\end{align*}
Since $|Rm|$ is bounded,  for $t \le 1$  we obtain the inequality:
$$    -  C|Ric|_{L^\infty(X)}  \le \log \big[\frac{\int_{\sigma_t} |\nabla \rho|  }{\int_{\sigma_0} |\nabla \rho|  } \big] \le C|Ric|_{L^\infty(X)} .$$
Here, $C$ depends on $\delta$.
For $t\ge 1$,  using the inequality $|Ric|_{L^\infty(\phi_t(\oR))} \le \frac{C_0}{t^2}$, we get:
$$    -  CC_0\log t  \le \log \big[\frac{\int_{\sigma_t} |\nabla \rho|  }{\int_{\sigma_1} |\nabla \rho|  } \big] \le CC_0\log t. $$
If $C_0$ is sufficiently small, then $\delta$ is close to 1 and the constant $CC_0$ above can be chosen to be smaller than $a$, completing the proof.
\end{proof}

Next, we establish estimates for the length and area with respect to the metric $g$.

\begin{lemma} \label{lemma:distancebound}
Let $a>0$. Assume $C_0$ is sufficiently small, as required in Proposition \ref{lemma:metric}. Then, there exists  a constant $C>0$ such that 
$$d(o, \phi_t(B_{2r}\setminus B_{r})) \ge \min \{C\big(t+(t+1)^{-a}r\big),  Cr^{\frac{1}{a+1}}     \}$$
for every $r >0$.
\end{lemma}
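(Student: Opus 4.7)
The plan is to reduce the distance estimate to a one-dimensional optimization in a two-dimensional model metric. Fix $q=\phi_t(p)\in\phi_t(B_{2r}\setminus B_r)$, so $r\le|p|_{g_0}\le 2r$, and let $\gamma:[0,\tau_1]\to\overline X$ be any smooth curve from $o$ to $q$. Writing $\gamma(\tau)=\phi_{\rho(\tau)}(p(\tau))$ in gradient-flow coordinates, the $g$-orthogonality of $\nabla\rho$ to the level sets yields
$$|\gamma'|_g^2=\frac{(\rho')^2}{|\nabla\rho|^2}+|(\phi_\rho)_*p'|_g^2\;\ge\;(\rho')^2+C^{-1}(\rho+1)^{-2a}|p'|_{g_0}^2,$$
where we use $|\nabla\rho|\le 1$ from Lemma \ref{lemma:basic} and the metric comparison from Proposition \ref{lemma:metric} (which is where the smallness of $C_0$ is invoked). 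Projecting $p(\cdot)$ onto the line through $0$ and $p(\tau_1)$ only decreases $|p'|_{g_0}$ pointwise while preserving $|p(\tau_1)|_{g_0}\ge r$, so it suffices to lower-bound the length, in the model metric $d\rho^2+C^{-1}(\rho+1)^{-2a}\,d\sigma^2$ on the half-plane, of any curve joining $(0,0)$ to $(t,r_0)$ with $|r_0|\ge r$.

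For the 2D problem I would apply $\sqrt{A^2+B^2}\ge(A+B)/\sqrt 2$ and set $\rho^*:=\max_\tau\rho(\tau)\ge t$. Since $\rho$ starts at $0$, attains $\rho^*$, and ends at $t$, we have $\int|\rho'|\,d\tau\ge 2\rho^*-t$, and since $(\rho+1)^{-a}\ge(\rho^*+1)^{-a}$ along the path while $\sigma$ displaces by at least $r$, $\int(\rho+1)^{-a}|\sigma'|\,d\tau\ge(\rho^*+1)^{-a}r$. Combining gives
$$\mathrm{length}(\gamma)\;\ge\;\frac{1}{\sqrt 2}\inf_{u\ge t}F(u),\qquad F(u):=2u-t+C^{-1/2}(u+1)^{-a}r.$$
The unique critical point of $F$ is $u_*=(aC^{-1/2}r/2)^{1/(a+1)}-1$. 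If $u_*\le t$ (equivalently $r\lesssim (t+1)^{a+1}$), then $F$ is increasing on $[t,\infty)$ and $\inf F=F(t)=t+C^{-1/2}(t+1)^{-a}r$; if $u_*>t$, the Euler--Lagrange identity $C^{-1/2}(u_*+1)^{-a}r=\tfrac{2}{a}(u_*+1)$ gives $F(u_*)=\tfrac{2(a+1)}{a}(u_*+1)-2-t$, and since $u_*\ge t$ this is bounded below by a constant multiple of $u_*+1\sim r^{1/(a+1)}$. Taking the minimum of the two regimes produces the claimed bound.

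The conceptual work lies in the Step 1 reduction and in the dichotomy ``interior versus boundary'' for the critical point of $F$; once those are in place the remaining estimates are elementary calculus. The main technical point, and the place where care is needed, is consolidating the various constants coming from Proposition \ref{lemma:metric}, the elementary inequality $\sqrt{A^2+B^2}\ge(A+B)/\sqrt 2$, and the optimization of $F$ into a single geometric constant $C=C(n,\gamma,a)$ uniform in $r$ and $t$.
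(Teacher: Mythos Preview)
Your argument is correct and follows essentially the same route as the paper's: parametrize an arbitrary curve in gradient-flow coordinates, split the length integrand via $\sqrt{A^2+B^2}\gtrsim A+B$, use $|\nabla\rho|\le 1$ together with the metric comparison of Proposition~\ref{lemma:metric} to reduce to minimizing $u\mapsto \mathrm{const}\cdot u+\mathrm{const}\cdot(u+1)^{-a}r$ over $u\ge t$, and then read off the dichotomy from the location of the critical point. The only cosmetic differences are that you project to a two-dimensional model (unnecessary, since $\int|p'|_{g_0}\ge r$ already holds in $\mathbb{R}^n$), and that you keep the slightly sharper constants $1/\sqrt{2}$ and $2\rho^*-t$ where the paper uses $1/2$ and $t_0$; neither changes the outcome.
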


\begin{proof}
Suppose  the $d(o, \phi_t(B_{2r}\setminus B_{r}))$ is achieved by a geodesic $\bar{\sigma}$ from $o$ to some point in $\phi_t(B_{2r}\setminus B_{r})$.

We parametrize $\bar{\sigma}$ as $(\alpha(p), \phi_{\alpha(p)} \sigma(p))$ where $p$ is a real parameter and $\sigma$ is a curve in $\oR$. Let $ t_0  \coloneqq \sup \alpha(p) \ge t$.

Then, we estimate:
\begin{align*} \mathrm{length}(\bar{\sigma})& = \int \bigg[ \frac{1}{|\nabla \rho|^2(\alpha(p)) }\alpha'(p)^2 + \phi_{\alpha(p)}^* g_{\alpha(p)} (\sigma'(p), \sigma'(p))\bigg]^{\tfrac{1}{2}} dp \\
& \ge \frac{1}{2} \int \frac{1}{|\nabla \rho|(\alpha(p)) }|\alpha'(p)|  + \big[\phi_{\alpha(p)}^* g_{\alpha(p)} (\sigma'(p), \sigma'(p)) \big]^{\tfrac{1}{2}} dp \\
& \ge \tfrac{1}{2} t_0 + \frac{1}{2}  \int  \big[ \phi_{\alpha(p)}^* g_{\alpha(p)} (\sigma'(p), \sigma'(p)) \big]^{\tfrac{1}{2}} dp \\
& \ge \tfrac{1}{2} t_0  + C (t_0 +1)^{-a} r
\end{align*}
where at the last line we used the inequality
$$\phi_{\alpha(p)}^* g_{\alpha(p)} \ge C ( \alpha(p)+1)^{-2a}g_0\ge C ( t_0+1)^{-2a}g_0.$$

Since the function $\tfrac{1}{2} t_0  + C (t_0 +1)^{-a} r$ is convex, its minimum occurs at either $t_0 = t$ or at the critical point $t_0+1 = (aCr)^{\frac{1}{a+1}}$, completinig the proof.
\end{proof}

\begin{lemma} \label{lemma:areabound}
Let $\Omega$ be a smooth domain on the boundary $\oR$. Then, there exists a constant $C>0$ such that
$$\mathrm{Area}(\phi_t(\Omega)) \le C \mathrm{Area}(\Omega).$$

\end{lemma}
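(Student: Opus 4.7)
The plan is to establish the area bound via a one-sided monotonicity for the $|\nabla\rho|$-weighted area on the level sets, obtained from the divergence theorem applied to $\nabla\rho$ on the flow-invariant cylinder
\[
\Phi_t := \bigcup_{0 \le t' \le t} \phi_{t'}(\Omega).
\]
Its boundary decomposes into three smooth pieces: the bottom $\Omega \subset \{\rho=0\}$ (with outward unit normal $-\nabla\rho/|\nabla\rho|$), the top $\phi_t(\Omega)\subset\{\rho=t\}$ (with outward unit normal $\nabla\rho/|\nabla\rho|$), and the lateral surface $\bigcup_{0\le t'\le t}\phi_{t'}(\partial\Omega)$, which is ruled by the integral curves of $\nabla\rho$ so that $\nabla\rho$ is tangent to it.

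Applying the divergence theorem to the vector field $\nabla\rho$ on $\Phi_t$, the lateral contribution vanishes and the other two collapse to
\[
\int_{\Phi_t}\Delta\rho\,dV_g \;=\; \int_{\phi_t(\Omega)}|\nabla\rho|\,dA \;-\; \int_{\Omega}|\nabla\rho|\,dA.
\]
Since the conformal infinity is the flat Euclidean space (so $g|_{\partial X}=g_0$) and Lemma \ref{lemma:eqs}(2) combined with $|Rm|\le 1$ forces $1-|\nabla\rho|^2=O(\rho^2)$ as $\rho\to 0$, the second integral equals $\mathrm{Area}(\Omega)$. By Lemma \ref{lemma:eqs}(1), $\Delta\rho=-s\rho^{-1}(1-|\nabla\rho|^2)\le 0$, using $s>0$ and $|\nabla\rho|\le 1$ from Lemma \ref{lemma:basic}. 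Hence $\int_{\phi_t(\Omega)}|\nabla\rho|\,dA\le\mathrm{Area}(\Omega)$, and combining this with the uniform lower bound $|\nabla\rho|\ge\delta$ from Lemma \ref{lemma:basic} gives
\[
\mathrm{Area}(\phi_t(\Omega)) \;\le\; \delta^{-1}\int_{\phi_t(\Omega)}|\nabla\rho|\,dA \;\le\; \delta^{-1}\mathrm{Area}(\Omega),
\]
so $C=\delta^{-1}$ works.

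The only technical point is to justify the divergence theorem up to $\{\rho=0\}$, which could in principle be sensitive to the regularity of the adapted compactification. However, $g=\rho^2g_+$ is at least $C^2$ up to $\partial X$ by Lemmas \ref{lem:adpreg1} and \ref{lem:adpreg2}, and combining formulas (1) and (2) of Lemma \ref{lemma:eqs} yields $\Delta\rho=-\tfrac{s}{2n(s-(n+1)/2)}\rho R$, which extends continuously to $\overline{X}$ and in fact vanishes on the boundary (as $R_{g_0}=0$ and $\rho=0$ there). Thus no boundary singularity arises and the argument is routine; this is the only step where the positivity of $R$ and the flatness of the conformal infinity genuinely enter, both playing their expected roles.
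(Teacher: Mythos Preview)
Your proof is correct and follows essentially the same approach as the paper: both apply the divergence theorem to $\nabla\rho$ on the flow cylinder $\bigcup_{t'\le t}\phi_{t'}(\Omega)$ to obtain $\int_{\phi_t(\Omega)}|\nabla\rho| = \int_{\Omega}|\nabla\rho| + \int_{\Phi_t}\Delta\rho$, then use $\Delta\rho\le 0$ (equivalently $R\ge 0$) together with $|\nabla\rho|\le 1$ and the uniform lower bound $|\nabla\rho|\ge\delta$. Your treatment is a bit more careful than the paper's terse version about justifying the divergence theorem up to the boundary and noting that $|\nabla\rho|\to 1$ there, but the core argument is identical.
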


\begin{proof}
Using the co-area formula, we compute:
\begin{align*}
\int_{\phi_t(\Omega)}|\nabla \rho| & = \int_{\Omega} |\nabla \rho| + \int_{\cup_{t'\le t} \phi_{t'} (\Omega)} \Delta \rho \\
& =  \int_{\Omega} |\nabla \rho| -C \int_{\cup_{t'\le t} \phi_{t'} (\Omega)} \rho R \le  \mathrm{Area}(\Omega).
\end{align*}
where we used the fact that $R \ge 0$ and $|\nabla \rho | \le 1$.
\end{proof}

Finally, we prove that the $L^{2N}$-norm of the curvature over the level sets are uniformly bounded provided that $N$ is sufficiently large.
\begin{proposition}\label{prop:curvaturebound}
Let $N$ be a positive integer such that $2N > \frac{n}{2}$. If $C_0$ is sufficiently small (depending on $N$), then $\oint_{\rho =t}|\nabla \rho| |A|^{2N}$ remains uniformly bounded with respect to $t$. 

\end{proposition}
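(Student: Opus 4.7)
The plan is to reduce the level-set integral to a dyadic sum on the boundary $\oR$: push back through the gradient flow $\phi_t$, decompose $\oR$ into dyadic annuli around $o$, and estimate each contribution using the weighted area bound of Lemma~\ref{lemma:areabound} and the distance lower bound of Lemma~\ref{lemma:distancebound}. The crucial pointwise input is that the Schouten tensor satisfies $|A|\le C|Rm|$ (since $A$ is a linear combination of $Ric$ and $Rg$), so assumption~(\ref{assumptions}) yields $|A|^{2N}\le C\min\{1,d^{-4N}\}$.

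First I would fix an auxiliary exponent $a\in(0,4N/n-1)$, which exists because $2N>n/2$ forces $4N/n>1$, and shrink $C_0$ so that Lemma~\ref{lemma:metric} and Lemma~\ref{lemma:distancebound} both hold for this $a$. Writing $\oR=B_1\cup\bigcup_{k\ge 0}\Omega_k$ with $\Omega_k=B_{2^{k+1}}\setminus B_{2^k}$, the computation inside the proof of Lemma~\ref{lemma:areabound} gives $\int_{\phi_t(\Omega_k)}|\nabla\rho|\le C(2^k)^n$, and Lemma~\ref{lemma:distancebound} forces every point of $\phi_t(\Omega_k)$ to satisfy
\[
d\ge d_k := c\min\bigl\{t+(t+1)^{-a}2^k,\ (2^k)^{1/(a+1)}\bigr\}.
\]
Combined with the decay of $|A|^{2N}$, this yields the single-annulus estimate
\[
\oint_{\phi_t(\Omega_k)}|\nabla\rho|\,|A|^{2N}\le C(2^k)^n\min\{1,d_k^{-4N}\}.
\]

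Summing over $k$, I would use $d_k^{-4N}\le (t+(t+1)^{-a}2^k)^{-4N}+(2^k)^{-4N/(a+1)}$ (up to constants) so that the total splits into two series. The second, $\sum_k(2^k)^{n-4N/(a+1)}$, converges to a constant as soon as $(a+1)n<4N$. The first is split at $2^k\sim t^{a+1}$: the low-$k$ part uses $t+(t+1)^{-a}2^k\ge ct$ to give a bound $\le Ct^{(a+1)n-4N}$, and the high-$k$ part uses $t+(t+1)^{-a}2^k\ge c(t+1)^{-a}2^k$ together with the convergent geometric sum $\sum_k(2^k)^{n-4N}$ to give the same bound. Since $(a+1)n-4N<0$, both contributions are uniformly bounded as $t\to\infty$; the piece $\phi_t(B_1)$ is handled directly by the uniform bound $|A|\le C$ and the area estimate, and the regime $t\le 1$ is covered in parallel by combining $|A|\le C$ on finitely many initial annuli with the same large-$k$ tail.

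The main obstacle is the balance between the $(2^k)^n$ area growth of the dyadic annuli and the $(2^k)^{-4N/(a+1)}$ decay furnished by Lemma~\ref{lemma:distancebound}: uniform boundedness in $t$ hinges on being able to select $a>0$ with $(a+1)n<4N$, and $C_0$ small enough for Lemma~\ref{lemma:distancebound} to apply at this $a$. This is precisely where the hypothesis $2N>n/2$ and the $N$-dependence of $C_0$ enter. Once this choice is made the argument is essentially combinatorial bookkeeping, with no use of the higher-order curvature PDEs from Lemma~\ref{lemma:eqcurvature} required at this stage.
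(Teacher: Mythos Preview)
Your proposal is correct and follows essentially the same approach as the paper: dyadic decomposition of the boundary, pushforward via $\phi_t$, and control of each annular contribution by combining the weighted area bound of Lemma~\ref{lemma:areabound} with the distance lower bound of Lemma~\ref{lemma:distancebound} and the quadratic curvature decay. The only cosmetic difference is that the paper takes the dyadic base radius $r=t(t+1)^a$ (so the split point $2^k\sim t^{a+1}$ is built into the decomposition and only a single geometric series needs to be summed), whereas you fix $r=1$ and then split the resulting sum at $2^k\sim t^{a+1}$ by hand; the two bookkeepings are equivalent.
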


\begin{proof}
Let $a>0$ be a constant to be determined later. Assume $C_0$ is small enough so that Lemma \ref{lemma:distancebound} applies.

Define $B_r$ as the ball on the boundary. Let  $A_0 \coloneqq B_r$ and $A_i \coloneqq B_{2^i r} \setminus B_{2^{i-1} r}$. Using the fact  that the $\phi_t$ decreases the $|\nabla \rho|$-weighted area, we estimate for $t\ge1$:
\begin{align*}
\oint_{\rho =t}|\nabla \rho| |A|^{2N} & \le \sum_{i= 0}^{\infty} \oint_{\phi_t(A_i)} |A|^{2N} dg_t \\
& \le C\frac{1}{t^{4N}} r^n  + \sum_{i= 1}^{\infty} \oint_{\phi_t(A_i)} |A|^{2N} dg_t .
\end{align*}
By Lemma \ref{lemma:distancebound}, and the inequality $\frac{1}{\min(x, y)} \le \frac{1}{x} +\frac{1}{y}$, we obtain:
\begin{align*}
\oint_{\phi_t(A_i)} |A|^{2N}  dg_t  & \le C2^{ni} r^n \mathrm{dist}(o, \phi_t(A_i))^{-4N}\\
& \le C2^{ni} r^n \big[(2^ir)^{\frac{-4N}{a+1}} + (t+2^i (t+1)^{-a}r ) ^{-4N} \big].
\end{align*}
Choosing $r = t (t+1)^a$, we observe that 
$$\oint_{\phi_t(A_i)} |A|^{2N}  dg_t  \le C2^{(n-4N/(a+1))i} t^{(a+1)n-4N}. $$
Thus, if $a$ is suffciently small and $2N > \frac{n}{2}$, then  $\oint_{\rho =t}|\nabla \rho| |A|^{2N}$ is uniformly bounded with respect to $t$.

For $t \le 1$, we estimate:
\begin{align*}
\oint_{\rho =t}|\nabla \rho| |A|^{2N} & \le \sum_{i= 0}^{\infty} \oint_{\phi_t(A_i)} |A|^{2N} dg_t \\
& \le C |A|_{L^\infty }^{2N} r^n  + \sum_{i= 1}^{\infty} \oint_{\phi_t(A_i)} |A|^{2N} dg_t .
\end{align*}
Again, we compute:
\begin{align*}
\oint_{\phi_t(A_i)} |A|^{2N} dg_t  &\le C2^{ni} r^n \big[(2^ir) ^{\frac{-4N}{a+1}} + (t+2^i (t+1)^{-a}r ) ^{-4N} \big]\\
&\le  C2^{ni} r^n \big[(2^ir) ^{\frac{-4N}{a+1}} + (2^{i-a}r ) ^{-4N} \big] \\
&\le C2^{i(n-4N/(a+1))} r^{n-4N/(a+1)}.
\end{align*}
If $a$ is suffciently small and $2N > \frac{n}{2}$, then
$$\oint_{\rho =t}|\nabla \rho| |A|^{2N}  \le C |A|_{L^\infty }^{2N} r^n  +  C r^{n-4N/(a+1)} \sum_{i=1}^\infty   2^{i(n-4N/(a+1))}. $$
Choosing $r =  ( 1+ |A|_{L^{\infty}})^{-\frac{1}{2}}$ yields the required bound, completing the proof. 
\end{proof}

\subsection{$L^2$-gradient estimate for curvature}

In this subsection, we establish Caccioppoli-type gradient estimates for a suitable power of the Ricci tensor. As a preliminary step, we first prove the following curved version of Hardy's inequality.

\begin{proposition}\label{prop:hardy} (Hardy's inequality)
Let $\gamma>1$. For  $b>1$ and $f \in C_c^{\infty}(X)$, we have
$$\int_{\rho \le p} \rho^{1-2b}|\nabla \rho|^2 f^2+  \rho^{3-2b} Rf^2  + \oint_{\rho = p} \rho^{2-2b} |\nabla \rho| f^2 \le  C_{n, \gamma, b} \int_{\rho \le p} \rho^{3-2b} |\nabla f|^2.$$

\end{proposition}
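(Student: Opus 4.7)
The plan is to prove this Hardy-type inequality by the classical method: integration by parts against a well-chosen radial-like vector field, using the divergence identity for $\rho$ to produce the three terms on the left.

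The natural vector field to test against is $V = \rho^{2-2b}\nabla\rho$. Using Lemma \ref{lemma:eqs}-(1) and (2) we have $\Delta\rho = -\alpha \rho R$ where $\alpha = \frac{s}{2n(s-\frac{n+1}{2})}>0$ because $\gamma>1$ gives $s-\frac{n+1}{2}>\frac{1}{2}$. Hence
\begin{equation*}
\operatorname{div}(\rho^{2-2b}\nabla\rho) = (2-2b)\rho^{1-2b}|\nabla\rho|^2 - \alpha\,\rho^{3-2b}R.
\end{equation*}
Multiplying by $f^{2}$ and integrating over $\{\rho\le p\}$, I would integrate by parts: since $f\in C_c^{\infty}(X)$ vanishes near the boundary $M=\{\rho=0\}$, the only boundary contribution comes from $\{\rho=p\}$, where the outward unit normal is $\nabla\rho/|\nabla\rho|$ and thus $V\cdot\nu=\rho^{2-2b}|\nabla\rho|$. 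This yields the identity
\begin{equation*}
(2b-2)\!\!\int_{\rho\le p}\!\!\rho^{1-2b}|\nabla\rho|^2 f^2 + \alpha\!\!\int_{\rho\le p}\!\!\rho^{3-2b} R f^2 + \oint_{\rho=p}\!\!\rho^{2-2b}|\nabla\rho| f^2 = 2\!\!\int_{\rho\le p}\!\!f\rho^{2-2b}\nabla f\cdot\nabla\rho.
\end{equation*}
All three terms on the left are non-negative: $b>1$ makes the first coefficient positive, the standing hypothesis $R\ge 0$ from (\ref{assumptions}) handles the second, and the boundary integrand is manifestly positive.

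Then I would estimate the right-hand side by Cauchy–Schwarz and Young's inequality with parameter $\epsilon>0$:
\begin{equation*}
2 f\rho^{2-2b}\nabla f\cdot\nabla\rho \le \epsilon\, f^2 \rho^{1-2b}|\nabla\rho|^2 + \tfrac{1}{\epsilon}\rho^{3-2b}|\nabla f|^2.
\end{equation*}
Choosing $\epsilon=b-1$ lets me absorb the first term into the $(2b-2)$-weighted term on the left, leaving
\begin{equation*}
(b-1)\!\!\int_{\rho\le p}\!\!\rho^{1-2b}|\nabla\rho|^2 f^2 + \alpha\!\!\int_{\rho\le p}\!\!\rho^{3-2b} R f^2 + \oint_{\rho=p}\!\!\rho^{2-2b}|\nabla\rho| f^2 \le \tfrac{1}{b-1}\!\!\int_{\rho\le p}\!\!\rho^{3-2b}|\nabla f|^2,
\end{equation*}
which gives the inequality with $C_{n,\gamma,b}$ depending on $b-1$ and $\alpha(n,\gamma)$.

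I do not anticipate a serious obstacle here; the mild concern is regularity of $\rho$ (and hence $g$) up to the level set $\{\rho=p\}$, since for $1<\gamma<\tfrac{3}{2}$ the adapted compactification is only known to be in a Hölder class below $C^{2,\alpha}$ globally by Lemma \ref{lem:adpreg1} and Lemma \ref{lem:adpreg3}. Because $f$ has compact support in the interior $X$, however, the identity $\Delta\rho=-\alpha\rho R$ and the integration by parts are carried out in the smooth interior, with the only boundary contribution on the regular interior hypersurface $\{\rho=p\}$, so smoothness of $g$ up to $M$ is not needed. A standard approximation argument (cutoff or smoothing in $\rho$) takes care of any remaining regularity issue at $\{\rho=p\}$ if needed.
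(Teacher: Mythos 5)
Your proposal is correct and is essentially the same argument as the paper's: the paper's step of expanding $0\le\int\rho^{3-2b}\bigl|\tfrac{\nabla\rho}{\rho}f+\alpha\nabla f\bigr|^2$ and then integrating the cross term by parts is algebraically equivalent to your pairing of $\mathrm{div}(\rho^{2-2b}\nabla\rho)f^2$ with an integration by parts followed by Young's inequality, and both hinge on the identity $\Delta\rho=-\tfrac{s}{2n(s-\frac{n+1}{2})}\rho R$ and on the sign of that constant for $\gamma>1$. Your remark about regularity and compact support in $X$ is consistent with the paper's treatment.
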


\begin{proof}

For a real number $\alpha$, we observe:

\begin{align*}
0 \le& \int_{\rho \le p} \rho^{3-2b} \big| \frac{\nabla \rho}{\rho} f + \alpha \nabla f \big|^2 \\
= &  \int_{\rho \le p} \rho^{1-2b}|\nabla \rho|^2 f^2 + \alpha^2 \rho^{3-2b} |\nabla f|^2 + 2\alpha \rho^{2-2b} f \nabla \rho \cdot \nabla f \\
= &  \int_{\rho \le p} \rho^{1-2b}|\nabla \rho|^2 f^2 + \alpha^2 \rho^{3-2b} |\nabla f|^2 - \alpha \mathrm{div}( \rho^{2-2b} \nabla \rho ) f^2 \\
& + \oint_{\rho = p} \alpha \rho^{2-2b} |\nabla \rho| f^2 \\
= &  \int_{\rho \le p} (1+(2b-2)\alpha) \rho^{1-2b}|\nabla \rho|^2 f^2 + \alpha^2 \rho^{3-2b} |\nabla f|^2 + \alpha C_{n, \gamma} \rho^{3-2b} Rf^2 \\
& + \oint_{\rho = p} \alpha \rho^{2-2b} |\nabla \rho| f^2.
\end{align*}
Choosing $\alpha$ to be a sufficiently large negative number ensures the desired inequality holds. This completes the proof.
\end{proof}

\begin{proposition}\label{prop:L2gradientweighted1}
Let $1<\gamma<\tfrac{3}{2}$ and let $\eta \in C_c^{\infty} (\overline{X})$ be a test function. Then  the following estimates hold:
$$\int \frac{1}{2} \rho^{3-2\gamma}  \eta^2 |\nabla R|^2 \le 
\int 2 \rho^{3-2\gamma} R^2 |\nabla \eta|^2 +C \rho^{3-2\gamma} |A|^2 R, $$
and
\begin{align*}
\int \rho^{3-2\gamma} \eta^2 |\nabla A|^2 \le & C_{n,\gamma} \bigg[ \int \rho^{3-2\gamma} |\nabla \eta|^2 |A|^2 + \rho^{2-2\gamma} \eta |\nabla \eta| |A|^2  +  \rho^{3-2\gamma} \eta^2 |\nabla R|^2 \\
&  + \rho^{3-2\gamma} \eta^2  |Rm| |A|^2 \bigg].
\end{align*}
\end{proposition}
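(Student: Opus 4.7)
The plan is to prove both estimates by standard weighted $L^2$-Caccioppoli testing against the equations of Lemma \ref{lemma:eqcurvature}, with Hardy's inequality (Proposition \ref{prop:hardy}) used to tame the degenerate $\rho^{-1}$-coefficient terms that appear in the equation for $A$ when $n\ge 4$. The weight $\rho^{3-2\gamma}$ is natural because, with $s=\tfrac{n}{2}+\gamma$, Lemma \ref{lemma:eqcurvature}(1) rewrites in self-adjoint divergence form as
\[
\mathrm{div}\bigl(\rho^{3-2\gamma}\nabla R\bigr) = \rho^{3-2\gamma}\Bigl[\tfrac{n+1}{2n(2\gamma-1)}R^2 - 2n(2\gamma-1)|A|^2\Bigr].
\]
For the first estimate, I would test this identity against $\eta^2 R$ and integrate by parts; the boundary integrals over $M$ vanish because $3-2\gamma>0$ and $R$ extends continuously up to $M$ while $\eta$ has compact support in $\overline{X}$. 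The resulting identity produces $\int \rho^{3-2\gamma}\eta^2|\nabla R|^2 + \tfrac{n+1}{2n(2\gamma-1)}\int \rho^{3-2\gamma}\eta^2 R^3$ on the left and $-2\int \rho^{3-2\gamma}\eta R\,\nabla\eta\cdot\nabla R + 2n(2\gamma-1)\int \rho^{3-2\gamma}\eta^2 R|A|^2$ on the right. Since $R\ge 0$ the $R^3$ term can be discarded, and Young's inequality on the mixed gradient term absorbs half of the $|\nabla R|^2$ integral into the left-hand side, producing the stated $2\int\rho^{3-2\gamma}R^2|\nabla\eta|^2$ bound.

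For the second estimate I would multiply the equation in Lemma \ref{lemma:eqcurvature}(4) by $\rho^{3-2\gamma}\eta^2 A^{ij}$ and integrate by parts. The $\Delta A_{ij}$ term contributes $-\int \rho^{3-2\gamma}\eta^2|\nabla A|^2$ plus two error terms involving $\nabla\rho$ and $\nabla\eta$ that are absorbed by Young's inequality. The $J_{ij}$ piece, after one integration by parts and substitution of the Codazzi identity $\nabla_i A^{ij}=\tfrac{1}{2n}\nabla^j R$ from Lemma \ref{lemma:eqcurvature}(2), produces the $\int\rho^{3-2\gamma}\eta^2|\nabla R|^2$ contribution to the final bound. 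The curvature-coupling $-2R_{imjk}A^{mk}$ together with the algebraic terms $(n-3)(A^2)_{ij}+|A|^2 g_{ij}+2JA_{ij}$ is dominated pointwise by $C|Rm||A|^2$ using $|A|,R\lesssim |Rm|$ and the global bound $|Rm|\le 1$ already established in Section 3.

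The main technical obstacle is the two $\rho^{-1}$-derivative terms $-\tfrac{n-3}{2}\rho^{-1}\rho^m(A_{mi,j}+A_{mj,i})$ and $(n-3)\rho^{-1}\langle\nabla\rho,\nabla A_{ij}\rangle$ when $n\ge 4$. Pairing the second with $A^{ij}$ yields $\tfrac{n-3}{2}\int \rho^{2-2\gamma}\eta^2\nabla\rho\cdot\nabla|A|^2$, which I would integrate by parts using
\[
\mathrm{div}(\rho^{2-2\gamma}\eta^2\nabla\rho) = (2-2\gamma)\rho^{1-2\gamma}|\nabla\rho|^2\eta^2 + 2\rho^{2-2\gamma}\eta\,\nabla\eta\cdot\nabla\rho + \rho^{2-2\gamma}\eta^2\Delta\rho
\]
together with Lemma \ref{lemma:eqs}(1). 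This generates exactly the mixed term $\int \rho^{2-2\gamma}\eta|\nabla\eta||A|^2$ in the conclusion, an $R|A|^2$ piece absorbed into the $|Rm||A|^2$ bound, and a singular integral $\int \rho^{1-2\gamma}|\nabla\rho|^2\eta^2|A|^2$. The first $\rho^{-1}$-term is dominated by $\int \rho^{2-2\gamma}\eta^2|A||\nabla A|$ and split by Young's inequality into an $\epsilon\int \rho^{3-2\gamma}\eta^2|\nabla A|^2$ piece (absorbed into the LHS) and a $C_\epsilon\int \rho^{1-2\gamma}\eta^2|A|^2$ piece. Both singular $\rho^{1-2\gamma}$-contributions are then controlled by Hardy's inequality with $b=\gamma$ applied componentwise to $\eta A_{ij}$, which is admissible because $A$ vanishes on $M$ by Proposition \ref{boundarycondition}; this yields $\int \rho^{1-2\gamma}|\nabla\rho|^2\eta^2|A|^2 \le C\int\rho^{3-2\gamma}|\nabla(\eta A)|^2$, whose $\eta^2|\nabla A|^2$ term is absorbed into the left-hand side and whose $|\nabla\eta|^2|A|^2$ term contributes to the final bound.
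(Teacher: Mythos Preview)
Your argument for the first inequality is fine and essentially identical to the paper's: rewrite Lemma~\ref{lemma:eqcurvature}(1) in the divergence form $\mathrm{div}(\rho^{3-2\gamma}\nabla R)=\rho^{3-2\gamma}A*A$, test against $\eta^2 R$, discard the $R^3$ term using $R\ge 0$, and apply Young's inequality.

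For the second inequality, however, your plan has a genuine gap. The issue is circularity in your use of Hardy's inequality. You produce singular contributions of the form $\int\rho^{1-2\gamma}|\nabla\rho|^2\eta^2|A|^2$ (from your Young's splitting of the $\rho^{-1}\rho^m A_{mi,j}$ term, and implicitly also from the $\nabla\rho$-error in the weighted integration by parts of $\Delta A$, which you pass over too quickly). You then propose to control these by Hardy's inequality with $f=\eta A_{ij}$. But Proposition~\ref{prop:hardy} with $b=\gamma$ returns
\[
\int\rho^{1-2\gamma}|\nabla\rho|^2\eta^2|A|^2 \le C_{n,\gamma}\int\rho^{3-2\gamma}\bigl(\eta^2|\nabla A|^2+|\nabla\eta|^2|A|^2\bigr),
\]
with a fixed constant $C_{n,\gamma}$ (of order $(\gamma-1)^{-2}$), and the coefficient on your singular term is either a fixed structural constant or a $C_\epsilon$ that blows up as $\epsilon\to 0$. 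There is no smallness available to absorb the resulting $\eta^2|\nabla A|^2$ term back into the left-hand side, so the argument does not close.

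The paper avoids this circularity by exploiting the Poincar\'e--Einstein structure through Lemma~\ref{lemma:algebraicineq}: the identity $\rho\nabla_i R = c_0\,E_{ik}\nabla^k\rho + c_1 R\,\nabla_i\rho$ (a consequence of Lemma~\ref{lemma:eqs}(2)) lets one trade the dangerous quantities $E^2(\nabla\rho,\nabla\rho)$ and $RE(\nabla\rho,\nabla\rho)$ for $\rho^2|\nabla R|^2$ and $R^2|\nabla\rho|^2$. After integrating the $\rho^{-1}\rho^m A_{mi,j}A^{ij}$ term by parts (rather than bounding it crudely), the only surviving singular term is $\int\rho^{1-2\gamma}\eta^2 R^2|\nabla\rho|^2$. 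Hardy's inequality is then applied with $f=\eta R$, which yields $\int\rho^{3-2\gamma}\eta^2|\nabla R|^2$ and $\int\rho^{3-2\gamma}R^2|\nabla\eta|^2$ on the right---both already present in the target estimate, so nothing circular arises. The missing ingredient in your proposal is precisely this algebraic conversion of $A$-singular terms into $R$-singular terms.
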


\begin{proof}
For the scalar curvature, we use Lemma \ref{lemma:eqcurvature}-(1), which gives:
$$\mathrm{div}(\rho^{3-2\gamma} \nabla R) = \rho^{3-2\gamma} A*A.$$
Here, $A*A$ represents a quadratic term in $A$.

We test this equation against $R \eta^2$. When applying integration by parts, the boundary integral over $\mathbb{R}^n$ vanishes by the expansion $\rho$ given in Lemma \ref{lem:adpreg3} near the boundary. Specifically, since $\rho/x =1+ O(x^{2\gamma})$ near the boundary and the scalar curvature of the boundary metric vanishes, the quadratic term in the expansion of $\rho/x$  also vanishes, as shown in \cite[Lemma 6.2]{CC}.
Thus, since $A=O(x^{2\gamma-2})$, we have
\begin{align*}
\int_{\mathbb{R}^n} \rho^{3-2\gamma} \eta^2 R \nabla_x R &= \int_{x = \epsilon} \rho^{3-2\gamma} \eta^2 O(x^{2\gamma-2}) O(x^{2\gamma-3}) \bigg|_{\epsilon = 0} \\
&  = \int_{x = \epsilon}  \eta^2 O(x^{2\gamma-2}) \bigg|_{\epsilon = 0} = 0,
\end{align*}
which holds since $\gamma>1$. This computation will be used repeatedly in this subsection.	
Thus, we obtain the following inequality:
\begin{align*}
0 = & \int \rho^{3-2\gamma} \eta^2 |\nabla R|^2 +2 \rho^{3-2\gamma} \eta R \nabla R \cdot \nabla \eta + \rho^{3-2\gamma}\eta^2 R A*A \ \\
\ge & \int \frac{1}{2} \rho^{3-2\gamma}  \eta^2 |\nabla R|^2 -2 \rho^{3-2\gamma} R^2 |\nabla \eta|^2 -C \rho^{3-2\gamma} \eta^2 |A|^2 R .
\end{align*}
This proves the first inequality. 

For the Schouten tensor, we use Lemma \ref{lemma:eqcurvature}-(4), which follows similarly:
\begin{align*}
&\int  \rho^{3-2\gamma} \eta^2 A*A*Rm  \\
= &  \int \big( - \Delta A_{ij} + \tfrac{1}{2n} \nabla_i \nabla_j R  -  (n-3) \rho^{-1}\nabla_m \rho A_{im,j}+  (n-3)\rho^{-1}\nabla \rho \cdot \nabla A_{ij} \big) \rho^{3-2\gamma} \eta^2 A_{ij} \\
= &  \int \rho^{3-2\gamma} \eta^2 |\nabla A|^2 + 2\eta \rho^{3-2\gamma} A_{ij} \nabla_k A_{ij} \nabla_k \eta  + (3-2\gamma) \rho^{2-2\gamma}\eta^2 A_{ij} \nabla_k A_{ij} \nabla_k \rho \\
&- \frac{1}{2n}\rho^{3-2\gamma} \nabla_j R \nabla_j A_{ij} \eta^2 -\frac{1}{n} \rho^{3-2\gamma} \eta  \nabla_i R \nabla_j \eta A_{ij} -\frac{(3-2\gamma)}{2n} \rho^{2-2\gamma} \eta^2  \nabla_i R \nabla_j \rho A_{ij}  \\
& -(n-3)\rho^{2-2\gamma} \eta^2 \nabla_m \rho A_{im,j} A_{ij} +\tfrac{(n-3)}{2}\rho^{2-2\gamma} \eta^2 \nabla \rho \cdot \nabla |A|^2 \\
\ge &  \int \frac{1}{2} \rho^{3-2\gamma} \eta^2 |\nabla A|^2 - 4 \rho^{3-2\gamma}  |\nabla \eta|^2 |A|^2 -4 \rho^{3-2\gamma} \eta^2|\nabla R|^2 \\
& +\frac{(n-2\gamma)}{2}\rho^{2-2\gamma}  \eta^2 \nabla |A|^2 \cdot \nabla \rho - \frac{(3-2\gamma)}{2n}\rho^{2-2\gamma} \eta^2  \nabla_i R \nabla_j \rho A_{ij}  \\
& -(n-3)\rho^{2-2\gamma} \eta^2 \nabla_m \rho A_{im,j} A_{ij} \\
= & \int \frac{1}{2} \rho^{3-2\gamma}\eta^2 |\nabla A|^2 - 4 \rho^{3-2\gamma}  |\nabla \eta|^2 |A|^2 -4 \rho^{3-2\gamma} \eta^2|\nabla R|^2   \\
& -(n-2\gamma)\rho^{2-2\gamma} \eta |A|^2  \nabla \eta \cdot \nabla \rho - (n-2\gamma)(1-\gamma) \rho^{1-2\gamma}  \eta^2 |A|^2 |\nabla \rho|^2 \\
& - \frac{(n+2\gamma-6)}{2n}\rho^{2-2\gamma} \eta^2  \nabla_i R \nabla_j \rho A_{ij}  + 2(n-3)\rho^{2-2\gamma} \eta \nabla_m \rho \nabla_j \eta A_{im} A_{ij}    \\
&  + (n-3)(2-2\gamma) \rho^{1-2\gamma} \eta^2 \nabla_m \rho \nabla_j \rho A_{im} A_{ij}  + \rho^{3-2\gamma} \eta^2 A*A*A.
\end{align*}
Note that  $(n-2\gamma)(1-\gamma)<0$ for $n\ge3$ and $1<\gamma<\frac{3}{2}$,  and $\nabla_i R \nabla_j \rho A_{ij}$ is a linear combination of $\rho^{-1} RA(\nabla \rho, \nabla \rho)$ and $\rho^{-1} A^2(\nabla \rho, \nabla \rho)$ by Lemma \ref{lemma:eqs}-(2). Using Lemma \ref{lemma:algebraicineq}, and the fact $|\nabla \rho| \le 1$, we can summarize the above computation as follows:
\begin{align*}
\int \rho^{3-2\gamma} \eta^2 |\nabla A|^2 \le & C_{n,\gamma} \bigg[ \int \rho^{3-2\gamma} |\nabla \eta|^2 |A|^2 + \rho^{2-2\gamma} \eta |\nabla \eta| |A|^2  +  \rho^{3-2\gamma} \eta^2 |\nabla R|^2 \\
& + \rho^{1-2\gamma} \eta^2 R^2 |\nabla \rho|^2 + \rho^{3-2\gamma} \eta^2  |Rm| |A|^2 \bigg] \\
\le & C_{n,\gamma} \bigg[ \int \rho^{3-2\gamma} |\nabla \eta|^2 |A|^2 + \rho^{2-2\gamma} \eta |\nabla \eta| |A|^2  +  \rho^{3-2\gamma} \eta^2 |\nabla R|^2 \\
& +  \rho^{3-2\gamma} \eta^2  |Rm| |A|^2 \bigg]. 
\end{align*}
Here, in the last step, we applied Hardy's inequality (Proposition \ref{prop:hardy}) with $f =\eta R$.
\end{proof}

\begin{proposition} \label{prop:weightedgradient}
Let $1<\gamma<\frac{3}{2}$. For sufficiently large $N>0$ and sufficiently small $C_0$ in Assumption (\ref{assumptions}), depending on $N$, we have:
$$\int_{\rho \le p} \rho^{3-2\gamma} |\nabla R^{N+1}|^2 < \infty, $$
$$\int_{\rho \le p} \rho^{3-2\gamma} |\nabla A|^2 |A|^{2N}< \infty. $$
Moreover, in this case, we also have that  $ \frac{1}{p^{2\gamma-2}} \oint_{\rho = p} |A|^{2N+2}$ is uniformly bounded for $p<1$, as a consequence of Hardy's inequality. If $n=3$, the choice $N = 0$ is sufficient.
\end{proposition}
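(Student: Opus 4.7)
The strategy is to upgrade the $L^2$-gradient estimate of Proposition \ref{prop:L2gradientweighted1} to higher $L^{2N+2}$-norms by a standard Caccioppoli-type iteration. Fix a cutoff $\eta \in C_c^\infty(\overline{X})$ equal to $1$ on $\{\rho \le p\}$. Using the identity $3-2\gamma = n+3-2s$, one rewrites Lemma \ref{lemma:eqcurvature}-(1) in divergence form as $\mathrm{div}(\rho^{3-2\gamma}\nabla R) = \rho^{3-2\gamma}(c_1|A|^2 + c_2 R^2)$. Testing this against $\eta^2 R^{2N+1}$ and integrating by parts --- with the boundary contribution at $\{x=0\}$ vanishing exactly as in Proposition \ref{prop:L2gradientweighted1}, using Lemma \ref{lem:adpreg3} together with $\gamma>1$ --- yields, after absorbing a factor proportional to $(N+1)^{-2}$ on the left,
\begin{equation*}
\int \rho^{3-2\gamma}\eta^2 |\nabla R^{N+1}|^2 \lesssim \int \rho^{3-2\gamma} R^{2N+2}|\nabla \eta|^2 + \int \rho^{3-2\gamma} \eta^2 R^{2N+1}(|A|^2+R^2).
\end{equation*}
An analogous test of the Schouten equation Lemma \ref{lemma:eqcurvature}-(4) against $\eta^2 |A|^{2N}A_{ij}$, with the $\rho^{-1}$-drift handled via Lemma \ref{lemma:algebraicineq} and Proposition \ref{prop:hardy} as in Proposition \ref{prop:L2gradientweighted1}, produces the corresponding bound for $\int \rho^{3-2\gamma}\eta^2 |A|^{2N}|\nabla A|^2$.

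To show the right-hand sides are finite, I would exploit $|R|,|A|\le C|Rm|\le 1$ to reduce the quadratic curvature expressions to a bound on $\int_{\rho\le p}\rho^{3-2\gamma}|A|^{2N+2}$. By the co-area formula combined with Lemma \ref{lemma:basic},
\begin{equation*}
\int_{\rho \le p}\rho^{3-2\gamma}|A|^{2N+2} \le \delta^{-1}\int_0^p t^{3-2\gamma}\oint_{\rho=t}|\nabla \rho||A|^{2N+2}\,dt \lesssim \int_0^p t^{3-2\gamma}\,dt,
\end{equation*}
which converges since $3-2\gamma > -1$; here I invoke Proposition \ref{prop:curvaturebound} with exponent $N+1$ (requiring $2(N+1)>n/2$ and $C_0$ small enough depending on $N$) to get the uniform level-set bound. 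Letting the support of $\eta$ exhaust $\{\rho\le p\}$ then yields the two $L^2$-gradient integrals of the statement. In the special case $n=3$, the Bach-flat identity Lemma \ref{lemma:eqcurvature}-(3) eliminates the $\rho^{-1}$-drift, and $\oint_{\rho=t}|\nabla\rho||A|^2$ is already controlled by Proposition \ref{prop:curvaturebound}, so the choice $N=0$ suffices.

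For the boundary-trace statement, I would apply Proposition \ref{prop:hardy} with $b=\gamma$ and $f=|A|^{N+1}$, truncated via a cutoff near $\{\rho\le p\}$. This yields
\begin{equation*}
\oint_{\rho=p}\rho^{2-2\gamma}|\nabla\rho||A|^{2N+2} \lesssim \int_{\rho\le p}\rho^{3-2\gamma}|\nabla |A|^{N+1}|^2,
\end{equation*}
whose right-hand side is finite by the gradient integral just established. Combined with $|\nabla\rho|\ge\delta$ from Lemma \ref{lemma:basic} and $\rho\equiv p$ on the level set, this produces the claimed uniform bound on $p^{2-2\gamma}\oint_{\rho=p}|A|^{2N+2}$. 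The main technical obstacle is controlling the cross-terms generated by the Schouten test, in particular the $\rho^{-1}$-drift $(n-3)\rho^{-1}\nabla\rho\cdot\nabla A$ and the mixed contraction $\nabla_m\rho\, A_{im,j} A_{ij} |A|^{2N-2}$; these must be absorbed into the leading $|\nabla A|^2$ term using Lemma \ref{lemma:algebraicineq} together with one further invocation of Hardy's inequality, precisely mirroring Proposition \ref{prop:L2gradientweighted1}.
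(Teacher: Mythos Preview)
Your approach is genuinely different from the paper's, and it has a gap in the Schouten step when $N\ge 1$.

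\textbf{The gap.} Testing Lemma~\ref{lemma:eqcurvature}-(4) against $\eta^2|A|^{2N}A_{ij}$ produces on the right-hand side a term of the form $\int\rho^{3-2\gamma}\eta^2|A|^{2N}|\nabla R|^2$ (it comes from the Hessian $J_{ij}$ after one integration by parts and the contracted Bianchi identity, exactly as in the proof of Proposition~\ref{prop:L2gradientweighted1}). Your scalar estimate only controls $\int\rho^{3-2\gamma}R^{2N}|\nabla R|^2$, and there is no pointwise inequality $|A|\lesssim R$, so this does not close. If instead you try to bound $|A|^{2N}|\nabla R|^2$ by testing the scalar equation against $\eta^2|A|^{2N}R$, the cross term $\eta^2 R\,\nabla(|A|^{2N})\cdot\nabla R$ feeds back a multiple of $\int\rho^{3-2\gamma}\eta^2|A|^{2N}|\nabla A|^2$ with a constant that need not be small; likewise, handling the $\rho^{1-2\gamma}$ term via Hardy with $f=\eta R|A|^{N}$ throws another copy of $|A|^{2N}|\nabla A|^2$ back onto the right. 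You end up with a coupled system $X\le C_1 Y+L$, $Y\le C_2 X+L$ in which $C_1 C_2<1$ is not guaranteed. A secondary issue: $\{\rho\le p\}$ is non-compact, so no $\eta\in C_c^\infty(\overline X)$ is identically $1$ on it; you do say ``exhaust'' later, but you never explain why the $|\nabla\eta|^2$ contribution stays bounded along the exhaustion.

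\textbf{How the paper proceeds.} The paper avoids the coupling entirely by a dyadic localisation in the boundary direction rather than by raising the power of the test function. One takes $\eta=\eta_1(\phi_\rho^{-1}(\cdot))\eta_2(\rho)$ with $\eta_1$ supported on an annulus $\tfrac r2\le|x|\le\tfrac{5r}{2}$ and applies the \emph{base} ($N=0$) estimate of Proposition~\ref{prop:L2gradientweighted1} on each annulus. By Lemma~\ref{lemma:distancebound} one has $d\gtrsim r^{1/(a+1)}$ on $C(r,2r)$, so the curvature decay gives the \emph{pointwise} bound $R^{2N}\le Cr^{-4N/(a+1)}$ there; one simply multiplies the base estimate by this constant to obtain
\[
\int_{\{\rho\le p\}\cap C(r,2r)}\rho^{3-2\gamma}|\nabla R|^2R^{2N}\ \le\ Cr^{-4N/(a+1)}\int_{\{\rho\le 2p\}\cap C(r/2,5r/2)}\rho^{3-2\gamma}\bigl(|\nabla\eta|^2R^2+|A|^3\bigr)\ \le\ Cr^{\,n-4(N+1)/(a+1)},
\]
using Lemma~\ref{lemma:areabound} for the area and the decay once more. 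Summing over $r=2^i$ converges once $N$ is large enough (or $N=0$ when $n=3$), and the Schouten case is identical: the only $|\nabla R|^2$ appearing is the $N=0$ one on the annulus, already controlled by the scalar step, and the extra factor $|A|^{2N}$ is again just the constant $r^{-4N/(a+1)}$. This is what decouples the two estimates and sidesteps the closure problem in your scheme.
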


\begin{proof}

Let $p, r \ge 1$. We define the cutoff function $\eta (z) \coloneqq \eta_1 (\phi_{\rho(z)}^{-1} (z)) \eta_2 (\rho(z))$ or $\eta(x, t) = \eta_1(x) \eta_2 (t)$ in the global coordinate.
$\eta_1$ and $\eta_2$ are smooth cutoff functions defined on $\mathbb{R}^n$ and $[0, \infty)$, respectively, as follows: $\eta_1 (x ) = 1$ on $r \le |x| \le 2r$ and $\eta_1 (x) = 0$ on $|x| \le r/2 $ or $|x| \ge 5r/2 $ and $|\nabla_0 \eta_1 | \le C/r$.  $\eta_2 (t) = 1$ on $t \le p$, $\eta_2 (t ) = 0 $ on $t \ge 2p$ and $|\nabla_0 \eta_2 | \le C/p$. Here the  subscript 0 on the $\nabla_0$  indicates differentiation with respect to the flat metric.

It is straightforward to verify the estimate: 
\begin{align*}
|\nabla \eta|^2  \le 2 \eta_2^2 |\nabla  \eta_1|^2 + 2\eta_1^2 |\nabla \eta_2|^2  &\le C\frac{(t+1)^{2a}}{r^2} \eta_2^2 + C \frac{|\nabla \rho|^2(t)}{p^2} \eta_1^2 \\
& \le C\frac{(t+1)^{2a}}{r^2} \eta_2^2 + C \frac{1}{p^2} \eta_1^2 . 
\end{align*}
Applying Proposition \ref{prop:L2gradientweighted1}, we estimate:
\begin{align*}
\int_{\{\rho \le p\}\cap C(r,2r) } \rho^{3-2\gamma}  |\nabla R|^2 & \le C\int_{\{\rho \le 2p\} \cap C(\tfrac{r}{2}, \tfrac{5r}{2}) } 4 \rho^{3-2\gamma} |\nabla \eta|^2 R^2   + \eta^2 \rho^{3-2\gamma} |A|^3\\
& \le C\int_{\{\rho \le 2p\} \cap C(\tfrac{r}{2}, \tfrac{5r}{2}) } \rho^{3-2\gamma} \big(\frac{(\rho+1)^{2a}}{r^2} + \frac{1}{p^2}  \big) R^2  +  \rho^{3-2\gamma}  |A|^3. 
\end{align*}
From Lemma \ref{lemma:distancebound}, we know that $\mathrm{dist}(o, C(r, 2r)) \ge C r^{1/(a+1)}$ for $r \ge 1$. This leads to:
\begin{align*}
\int_{\{\rho \le p\}\cap C(r,2r) } \rho^{3-2\gamma}  |\nabla R|^2 R^{2N} & \le Cr^{-4N/(a+1)} \int_{\{\rho \le 2p\}\cap C(r,2r) } \rho^{3-2\gamma}  |\nabla R|^2 \\
&\le C r^{-4N/(a+1)}\int_{\{\rho \le 2p\} \cap C(\tfrac{r}{2}, \tfrac{5r}{2}) } \rho^{3-2\gamma} (\rho+1)^{2a}  R^2  +  \rho^{3-2\gamma}  |A|^3 \\
&\le C r^{n -4(N+1)/(a+1)}  \int_0^{2p} \rho^{3-2\gamma} (\rho+1)^{2a} d\rho \\
&\le C r^{n -4(N+1)/(a+1)}, 
\end{align*}
where we used Lemma \ref{lemma:areabound} and the co-area formula in the last step. If $N$ is sufficiently large, then for a sufficiently small $C_0$, the constant $a$ can be chosen small enough (by Lemma \ref{lemma:distancebound}), ensuring:
$$\sum_{i} (2^ir)^{n -4(N+1)/(a+1)} < \infty. $$
Thus, we conclude:
$$\int_{\rho \le p} \rho^{3-2\gamma} |\nabla R^{N+1}|^2 < \infty.$$

In particular, if $n=3$, then we can choose $N=0$.
For the Schouten tensor $A$, the proof follows the same steps, except that we use the fact that $\rho^{2-2\gamma}$ is integrable near $\rho = 0$ since $\gamma < \frac{3}{2}$.
\end{proof}

\begin{proposition}
Let $\gamma \ge \tfrac{3}{2}$ and $\eta \in C_c^{\infty} (\overline{X})$ be a test function. Then we  have the following estimates:
$$ \int  \eta^2 |\nabla R|^2\le C\int R^2 |\nabla \eta|^2 + \rho^{-1}\eta |\nabla \eta\cdot \nabla \rho|  R^2 + \eta^2 |A|^3, $$
and
\begin{align*}
\int  \eta^2 |\nabla A|^2 \le & C_{n,\gamma} \bigg[ \int  |\nabla \eta|^2 |A|^2 + \rho^{-1} \eta |\nabla \eta \cdot \nabla \rho| |A|^2  +  \eta^2 |\nabla R|^2  +  \eta^2  |Rm| |A|^2 \bigg].
\end{align*}

\end{proposition}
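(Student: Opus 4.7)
The plan is to reproduce the argument of Proposition \ref{prop:L2gradientweighted1} with the weight $\rho^{3-2\gamma}$ replaced by $1$. For $\gamma\ge\tfrac{3}{2}$ the exponent $3-2\gamma$ is non-positive, so including that weight would amplify rather than suppress boundary singularities; instead the improved regularity afforded by $\gamma\ge\tfrac{3}{2}$ (Lemma \ref{lem:adpreg1}) makes the unweighted integration by parts legitimate. Using the asymptotic expansion of $\rho$ from Lemma \ref{lem:adpreg3} together with the Ricci-flatness of the boundary metric (Proposition \ref{boundarycondition}), one reads off $R,A=O(\rho^{2\gamma-2})$ and $\nabla R,\nabla A=O(\rho^{2\gamma-3})$ near $\partial X=\mathbb{R}^n$. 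Consequently all boundary integrals appearing below, such as $\int_{\partial X}R\eta^2\partial_\nu R$ and $\int_{\partial X}R^2\eta^2\rho^{-1}\nabla\rho\cdot\nu$, are of order $O(\rho^{4\gamma-5})$ at the boundary and vanish for $\gamma\ge\tfrac{3}{2}$.

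For the first inequality, I would test Lemma \ref{lemma:eqcurvature}-(1) against $R\eta^2$. Integration by parts on $\int R\eta^2\Delta R$ yields $-\int|\nabla R|^2\eta^2-2\int R\eta\,\nabla R\cdot\nabla\eta$, and the mixed term is absorbed via AM--GM at the cost of a $2\int R^2|\nabla\eta|^2$ contribution. The drift term $(3-2\gamma)\int R\eta^2\rho^{-1}\nabla\rho\cdot\nabla R=\tfrac{3-2\gamma}{2}\int\eta^2\rho^{-1}\nabla\rho\cdot\nabla R^2$ is then integrated by parts once more; using $\Delta\rho=-s\rho^{-1}(1-|\nabla\rho|^2)$ and Lemma \ref{lemma:eqs}-(2) one computes
\begin{equation*}
\mathrm{div}(\eta^2\rho^{-1}\nabla\rho)=2\eta\rho^{-1}\nabla\eta\cdot\nabla\rho-\eta^2\rho^{-2}-C_{n,\gamma}\eta^2 R,
\end{equation*}
which produces the allowed $\rho^{-1}\eta|\nabla\eta\cdot\nabla\rho|R^2$ term, together with a $\tfrac{3-2\gamma}{2}\int R^2\eta^2\rho^{-2}$ remainder whose coefficient is non-positive for $\gamma\ge\tfrac{3}{2}$ and is therefore moved to the left and discarded. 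The remaining $R^3$ and $R|A|^2$ terms coming from the right-hand side of Lemma \ref{lemma:eqcurvature}-(1) are controlled by the algebraic bound $|R|\le 2n\sqrt{n+1}\,|A|$, which follows by combining the identity $|A|^2=(n-1)^{-2}(|Ric|^2-\tfrac{3n-1}{4n^2}R^2)$ with the elementary inequality $R^2\le(n+1)|Ric|^2$; both are thus absorbed into $\eta^2|A|^3$.

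The second inequality is proved by testing Lemma \ref{lemma:eqcurvature}-(4) against $A^{ij}\eta^2$ in the same manner. The $\Delta A_{ij}$ pairing produces $-\int|\nabla A|^2\eta^2-2\int A^{ij}\eta\,\nabla A_{ij}\cdot\nabla\eta$ with vanishing boundary, and AM--GM on the mixed term yields the $|\nabla\eta|^2|A|^2$ contribution. The Hessian-of-$R$ term $\tfrac{1}{2n}\nabla_i\nabla_j R$ is integrated by parts, Lemma \ref{lemma:eqcurvature}-(2) is used to trade $\nabla_j A^{ij}$ for $\tfrac{1}{2n}\nabla_i R$, and Cauchy--Schwarz produces the $\eta^2|\nabla R|^2$ term on the right. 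The two $\rho^{-1}$ drift pieces are handled exactly as in the scalar case: each is rewritten as a multiple of $\rho^{-1}\nabla\rho\cdot\nabla|A|^2$ and integrated by parts, generating the allowed $\rho^{-1}\eta|\nabla\eta\cdot\nabla\rho||A|^2$ term and an $\rho^{-2}|A|^2$ remainder whose coefficient is again non-positive for $\gamma\ge\tfrac{3}{2}$ and can be discarded. Finally, the curvature contraction $-2R_{imjk}A^{mk}A^{ij}$ and the cubic terms $(n-3)(A^2)_{ij}A^{ij}$, $|A|^2\operatorname{tr}(A)$ and $2JA_{ij}A^{ij}$ are bounded pointwise by $C|Rm||A|^2$, giving the final $\eta^2|Rm||A|^2$ contribution. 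The main subtlety throughout is the sign bookkeeping: verifying that every $\rho^{-2}|A|^2$ and $\rho^{-2}R^2$ remainder accumulated during the repeated integrations by parts carries a non-positive coefficient in the regime $\gamma\ge\tfrac{3}{2}$, so that each such term contributes favorably to the left-hand side rather than having to be estimated.
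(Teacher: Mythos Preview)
Your treatment of the first inequality is essentially the paper's argument and is fine: testing Lemma \ref{lemma:eqcurvature}-(1) against $R\eta^2$, integrating by parts, and using $3-2\gamma\le 0$ to discard the $\rho^{-2}|\nabla\rho|^2 R^2\eta^2$ remainder. (A small slip: your divergence formula should have $-\eta^2\rho^{-2}|\nabla\rho|^2$, not $-\eta^2\rho^{-2}$, but this does not affect the sign conclusion.)

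The second inequality, however, has a real gap. You claim that both drift pieces $-\tfrac{n-3}{2}\rho^{-1}\rho^m(A_{mi,j}+A_{mj,i})$ and $(n-3)\rho^{-1}\langle\nabla\rho,\nabla A_{ij}\rangle$, after pairing with $A^{ij}$, are multiples of $\rho^{-1}\nabla\rho\cdot\nabla|A|^2$. Only the second one is. The first becomes $-(n-3)\rho^{-1}\rho^m A_{mi,j}A^{ij}$, which after integration by parts in $j$ produces, among other things, the cross term $\tfrac{n-3}{2n}\rho^{-1}\eta^2 A(\nabla\rho,\nabla R)$ (via $\nabla_j A^{ij}=\tfrac{1}{2n}\nabla^i R$) and a quadratic term $\rho^{-2}\eta^2 A^2(\nabla\rho,\nabla\rho)$, as well as $\rho^{-1}\eta\, A^2(\nabla\eta,\nabla\rho)$. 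None of these carry a $\gamma$-dependent coefficient, so your assertion that the $\rho^{-2}$ remainder ``is again non-positive for $\gamma\ge\tfrac{3}{2}$'' is simply false; the coefficients involve $n-3$, not $\gamma$.

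What the paper actually does here is invoke Lemma \ref{lemma:algebraicineq}: since $\rho\nabla R$ is a specific linear combination of $E(\nabla\rho,\cdot)$ and $R\nabla\rho$ (Lemma \ref{lemma:eqs}-(2)), the problematic terms $\rho^{-2}E^2(\nabla\rho,\nabla\rho)$, $\rho^{-2}RE(\nabla\rho,\nabla\rho)$ appearing in $\rho^{-1}A(\nabla\rho,\nabla R)$ and in $\rho^{-2}A^2(\nabla\rho,\nabla\rho)$ can be absorbed into $b\,|\nabla R|^2$ at the price of a residual $b_0\,\rho^{-2}R^2|\nabla\rho|^2$. That residual is then controlled by Hardy's inequality (Proposition \ref{prop:hardy} with $b=\tfrac{3}{2}$) applied to $f=\eta R$, giving $\int\rho^{-2}|\nabla\rho|^2\eta^2 R^2\le C\int(\eta^2|\nabla R|^2+R^2|\nabla\eta|^2)$. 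Your proposal omits both Lemma \ref{lemma:algebraicineq} and Hardy's inequality, and without them the indefinite term $\rho^{-1}\eta^2 A(\nabla\rho,\nabla R)$ cannot be closed in the stated form of the estimate.
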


\begin{proof}
For the scalar curvature, we use the equation:
$$	\Delta R=-(3-2\gamma)\rho^{-1} \nabla  \rho \cdot \nabla R  + A*A . $$
Testing the above equation with $\eta^2 R$, we obtain:
\begin{align*}
0   =&\int  \big(-\Delta R -(3-2\gamma)\rho^{-1} \nabla \rho \cdot \nabla R + A*A \big)  \eta^2 R \\
=& \int \eta^2 |\nabla R|^2  + 2\eta R \nabla \eta \cdot \nabla R +(3-2\gamma) \eta R^2 \rho^{-1} \nabla \eta \cdot \nabla \rho -\tfrac{(3-2\gamma)}{2} \eta^2 \rho^{-2} |\nabla \rho|^2R^2 \\
& + \eta^2 A*A*A \\
\ge &C \int  \eta^2 |\nabla R|^2 - R^2 |\nabla \eta|^2 - \rho^{-1}\eta |\nabla \eta \cdot \nabla\rho|  R^2 - \eta^2 |A|^3, 
\end{align*}
by Young's inequality and the fact that $\gamma \ge \frac{3}{2}$.

For the Schouten tensor, we have:
\begin{align*}
&\int  \eta^2 A*A*Rm  \\
= &  \int \big( - \Delta A_{ij} + \tfrac{1}{2n} \nabla_i \nabla_j R  -  (n-3) \rho^{-1}\nabla_m \rho A_{im,j}+  (n-3)\rho^{-1}\nabla \rho \cdot \nabla A_{ij} \big)  \eta^2 A_{ij} \\
= &  \int  \eta^2 |\nabla A|^2 + 2\eta A_{ij} \nabla_k A_{ij} \nabla_k \eta  - \frac{1}{2n} \nabla_j R \nabla_j A_{ij} \eta^2 -\frac{1}{n}  \eta  \nabla_i R \nabla_j \eta A_{ij} \\
& -(n-3)\rho^{-1} \eta^2 \nabla_m \rho A_{im,j} A_{ij} +\tfrac{(n-3)}{2}\rho^{-1} \eta^2 \nabla \rho \cdot \nabla |A|^2 \\
\ge &  \int \frac{1}{2}  \eta^2 |\nabla A|^2 - 4   |\nabla \eta|^2 |A|^2 -4 \eta^2|\nabla R|^2 +2(n-3)\rho^{-1} \eta \nabla_j \eta  \nabla_m \rho A_{im} A_{ij} \\
& +(n-3)\rho^{-2} \eta^2 \nabla_m \rho \nabla_j \rho A_{im} A_{ij} +\frac{(n-3)}{2n} \rho^{-1} \eta^2 \nabla_m \rho \nabla_i R A_{im}  \\
& + \tfrac{(n-3)}{2}\rho^{-2} \eta^2 |\nabla \rho|^2 |A|^2 - (n-3) \rho^{-1} \eta \nabla \rho \cdot \nabla \eta   |A|^2 + \eta^2 A*A*A. 
\end{align*}
We use the inequality:
$$\rho^{-1}\eta A^2(\nabla \eta, \nabla \rho) \le \rho^{-2} \eta^2 A^2(\nabla \rho, \nabla \rho) + |\nabla \eta|^2 |A|^2, $$
to handle the term $\rho^{-1} \eta \nabla_j \eta  \nabla_m \rho A_{im} A_{ij}$.
Applying Lemma \ref{lemma:algebraicineq} and Hardy’s inequality completes the proof.
\begin{align*}
\int  \eta^2 |\nabla A|^2 \le & C_{n,\gamma} \bigg[ \int  |\nabla \eta|^2 |A|^2 + \rho^{-1} \eta |\nabla \eta\cdot \nabla \rho| |A|^2  +  \eta^2 |\nabla R|^2  +  \eta^2  |Rm| |A|^2 \bigg].
\end{align*}

\end{proof}

\begin{proposition} \label{prop:weightedgradient2}
Let $ \gamma \ge \frac{3}{2} $. For a sufficiently large positive integer $N$ and a sufficiently small $C_0$ in Assumption (\ref{assumptions}) depending on $N$, we have:
$$\int_{\rho \le p} |\nabla R^{N+1}|^2 < \infty,$$
$$\int_{\rho \le p}  |\nabla A|^2 |A|^{2N}< \infty. $$
Moreover, by Hardy's inequality, we additionally obtain that:
$ \frac{1}{p}  \oint_{\rho = p} |A|^{2N+2}$ is uniformly bounded for $p<1$.
If $n=3$, $N$ can be chosen as 0.

\end{proposition}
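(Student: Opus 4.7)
The plan mimics the proof of Proposition \ref{prop:weightedgradient}, replacing the weighted inequalities of Proposition \ref{prop:L2gradientweighted1} with the unweighted ones of the preceding proposition. I would take the same product cutoff $\eta(x,t) = \eta_1(x)\eta_2(t)$, with $\eta_1$ a radial bump supported on the dyadic annulus $C(r,2r) = B_{2r}\setminus B_r \subset \oR$ satisfying $|\nabla_0 \eta_1|\le C/r$, and $\eta_2$ supported on $\{t \le 2p\}$ with $|\eta_2'|\le C/p$. Via Proposition \ref{lemma:metric}, the metric gradient satisfies $|\nabla \eta|^2 \le C[(\rho+1)^{2a}r^{-2}\eta_2^2 + p^{-2}\eta_1^2]$; moreover, since $\nabla \eta_1$ is tangent to the level sets of $\rho$, the new terms of the form $\rho^{-1}\eta|\nabla \eta \cdot \nabla \rho|$ appearing in the $\gamma \ge \tfrac{3}{2}$ estimates reduce to $C\rho^{-1}p^{-1}\eta_1^2|\nabla\rho|^2$, bounded by $C\rho^{-1}p^{-1}\eta_1^2$.

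For the scalar curvature I would apply the first inequality of the preceding proposition and then exploit the pointwise decay $R \le C_0 d(o,\cdot)^{-2}$ combined with Lemma \ref{lemma:distancebound} (which gives $d(o,\phi_t(C(r,2r))) \ge Cr^{1/(a+1)}$ for $r\ge 1$ and $t\le 2p$) to obtain $R^{2N} \le Cr^{-4N/(a+1)}$ on each annulus. Together with Lemma \ref{lemma:areabound} and the co-area formula over $\rho \in [0,2p]$, each dyadic contribution is bounded by $Cr^{n-4(N+1)/(a+1)}$ up to a $p$-dependent factor; the $\rho^{-1}$-weighted piece is absorbed via Hardy's inequality (Proposition \ref{prop:hardy}) with $b=\tfrac{3}{2}$. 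The geometric sum over $r = 2^i$ converges once $N+1 > n(a+1)/4$, which holds for $N$ large and $C_0$ small (hence $a$ small by Proposition \ref{lemma:metric}). The range $r \le 1$ is handled exactly as in Proposition \ref{prop:weightedgradient}, giving $\int_{\rho \le p}|\nabla R^{N+1}|^2 < \infty$.

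For the Schouten tensor the argument is parallel: substituting the same cutoff into the second inequality of the preceding proposition and using $|A|^{2N} \le Cr^{-4N/(a+1)}$ on each annulus, the four terms on the right each scale as $r^{n-4(N+1)/(a+1)}$ (the $\eta^2|\nabla R|^2$-term is dominated by the scalar estimate just proved, with $N$ possibly increased, and the $|Rm||A|^2$-term gains an extra $r^{-2/(a+1)}$ from the curvature decay). The same choice of $N$ and $a$ makes the geometric series converge, yielding $\int_{\rho \le p}|\nabla A|^2|A|^{2N}<\infty$. For the boundary trace, I would apply Hardy's inequality with $b=\tfrac{3}{2}$ and $f = \eta_1|A|^{N+1}$, then let $\eta_1 \to 1$: the boundary term $\oint_{\rho=p}\rho^{-1}|\nabla\rho||A|^{2N+2}$ on the left is dominated by $\int |\nabla f|^2 \lesssim \int |\nabla \eta_1|^2|A|^{2N+2} + |A|^{2N}|\nabla A|^2$, and both pieces are controlled by the integrals just shown finite together with the pointwise/area bound $\int_{C(r,2r)}|A|^{2N+2}\le Cr^{n-4(N+1)/(a+1)}$. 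Since $|\nabla\rho|\ge \delta$ by Lemma \ref{lemma:basic}, this gives the claimed bound on $p^{-1}\oint_{\rho=p}|A|^{2N+2}$.

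The main obstacle throughout is the appearance of $\rho^{-1}$-coupled lower-order terms in the $\gamma \ge \tfrac{3}{2}$ gradient estimates, which is precisely what Hardy's inequality (together with the vanishing of boundary curvature from Proposition \ref{boundarycondition}) is designed to absorb. When $n=3$ the Schouten equation is Bach-flat (Lemma \ref{lemma:eqcurvature}-(3)), so the $\rho^{-1}$-terms in Lemma \ref{lemma:eqcurvature}-(4) disappear, and the annular sum closes already for $N=0$ because $n-4/(a+1) = 3 - 4/(a+1) < 0$ whenever $a<1/3$, which holds once $C_0$ is sufficiently small.
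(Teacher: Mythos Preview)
Your overall plan---product cutoff $\eta=\eta_1\eta_2$, annular decomposition via Lemma~\ref{lemma:distancebound} and Lemma~\ref{lemma:areabound}, and a dyadic sum---matches the paper's. The one step that does not close is your treatment of the new term $\rho^{-1}\eta\,|\nabla\eta\cdot\nabla\rho|$. You bound it by $C\rho^{-1}p^{-1}\eta_1^2$ and propose to absorb it with Hardy's inequality at $b=\tfrac32$. But Hardy applied with $f=\eta_1 R$ returns $\int_{\rho\le 2p}|\nabla(\eta_1 R)|^2$, which contains $\int_{\rho\le 2p}\eta_1^2|\nabla R|^2$ \emph{without} the factor $\eta_2^2$; this is at least as large as the left side of the Caccioppoli inequality you started from, so nothing can be absorbed and the argument is circular.

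The paper's remedy is simpler, and you already have the key ingredient. Since $\nabla\eta_1\cdot\nabla\rho=0$ (as you observed), one has $\nabla\eta\cdot\nabla\rho=\eta_1\,\eta_2'(\rho/p)\,p^{-1}|\nabla\rho|^2$, and this is supported on $\{p\le\rho\le 2p\}$, where automatically $|\eta_2'(\rho/p)|\le C\rho/p$. Hence
\[
|\nabla\eta\cdot\nabla\rho|\ \le\ C\,\frac{\rho}{p^2},\qquad
\rho^{-1}\eta\,|\nabla\eta\cdot\nabla\rho|\ \le\ \frac{C}{p^2},
\]
so the offending term behaves exactly like the $p^{-2}\eta_1^2$ part of $|\nabla\eta|^2$ and needs no Hardy at all. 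With this replacement, the rest of your argument (annular decay $R^{2N}\le Cr^{-4N/(a+1)}$, the geometric sum, and the boundary trace via Hardy with $f=\eta_1|A|^{N+1}$) runs exactly as in Proposition~\ref{prop:weightedgradient}. Note also that this support trick is needed even when $n=3$: the $\rho^{-1}$ term in the scalar-curvature estimate comes from the coefficient $(3-2\gamma)$ in Lemma~\ref{lemma:eqcurvature}-(1), not from the $(n-3)$-terms of part~(4), and therefore does not vanish for $n=3$ when $\gamma>\tfrac32$.
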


\begin{proof}
The proof follows a similar approach to that of Proposition \ref{prop:weightedgradient}. The key difference is handling the term $\rho^{-1} \eta |\nabla \eta \cdot \nabla \rho| |A|^2$.
This is achieved by choosing the test function $\eta$ in such a way that:
$$|\nabla \eta \cdot \nabla \rho| \le C\frac{\rho}{p^2}, $$
where the constant $C$ above is independent of $r$.

Specifically, we define the test function as $\eta(x,\rho) = \eta_1(x/r) \eta_2 (\rho/p)$. 
Then, we obtain:
$$|\nabla \eta \cdot \nabla \rho|(x, \rho)  = \tfrac{1}{p}|\eta_1(x/r) \eta_2'(\rho/p) | |\nabla\rho|^2 \le C\frac{\rho}{p^2},  $$
provided that $\eta_2$ is chosen such that $|\eta_2'(\rho)| \le C \rho$. 
\end{proof}

\begin{corollary} \label{lemma:rhogradient}
Let $\gamma >1 $ and $N$ be a sufficiently large positive integer. Then, for $C_0$ sufficiently small dependingon $n, \gamma, N$, and any $p>0$, we have:
$$\int_{\rho \le p } \rho |\nabla R|^2 R^{2N} < \infty \text{ and } \int_{\rho \le p } \rho |\nabla A|^2 |A|^{2N} < \infty.$$

\end{corollary}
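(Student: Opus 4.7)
The plan is to deduce both bounds directly from Propositions \ref{prop:weightedgradient} and \ref{prop:weightedgradient2} by a pointwise comparison of weights on the finite-height slab $\{\rho \le p\}$. Using the identity $|\nabla R^{N+1}|^2 = (N+1)^2 R^{2N}|\nabla R|^2$, those two propositions supply, for $N$ large and $C_0$ sufficiently small depending on $n,\gamma,N$,
\begin{equation*}
\int_{\rho \le p} \rho^{\beta} R^{2N}|\nabla R|^2 < \infty \quad \text{and} \quad \int_{\rho \le p} \rho^{\beta} |\nabla A|^2|A|^{2N} < \infty,
\end{equation*}
where $\beta = 3-2\gamma \in (0,1)$ when $1 < \gamma < \tfrac32$ and $\beta = 0$ when $\gamma \ge \tfrac32$. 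The corollary corresponds to the weight $\rho^1$, so the task reduces to establishing a pointwise comparison of the form $\rho \le C(p,\gamma)\,\rho^{\beta}$ on $\{\rho \le p\}$.

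The strict hypothesis $\gamma > 1$ is precisely what makes this comparison work: in both regimes it forces $1-\beta > 0$ (indeed $1-\beta = 2\gamma - 2$ when $1<\gamma<\tfrac32$ and $1-\beta = 1$ when $\gamma \ge \tfrac32$), so the function $\rho^{1-\beta}$ is strictly increasing in $\rho$ and is therefore bounded on the slab by $\max(1, p^{1-\beta})$. Taking $C(p,\gamma) \coloneqq \max(1, p^{1-\beta})$ and multiplying the two finite weighted integrals above by this constant produces the two bounds claimed in the corollary.

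I foresee no genuine obstacle; the argument is pure bookkeeping on top of the preceding two propositions, from which the largeness of $N$ and the smallness of $C_0$ are simply inherited. The only point worth flagging is that when $1 < \gamma < \tfrac32$ the weight $\rho^{3-2\gamma}$ dominates $\rho$ near the boundary but is dominated by $\rho$ away from it; both regimes are absorbed into the single slab-wide constant $C(p,\gamma)$ via the elementary monotonicity of $\rho^{1-\beta}$. This is also why the threshold $\gamma = 1$ is excluded: at $\gamma = 1$ one has $\beta = 1$ and the comparison becomes an equality which carries no new information, while for $\gamma < 1$ the inequality would go the wrong way on $\{0 < \rho < 1\}$.
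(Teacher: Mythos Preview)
Your argument is correct and is essentially the same as the paper's one-line proof, which simply notes that the result follows from Propositions~\ref{prop:weightedgradient} and~\ref{prop:weightedgradient2} because $3-2\gamma<1$. Your pointwise weight comparison $\rho \le C(p,\gamma)\rho^\beta$ on $\{\rho\le p\}$ is exactly the content of that inequality, just written out in full.
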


\begin{proof}
The result follows directly from Propositions \ref{prop:weightedgradient} and \ref{prop:weightedgradient2}, since $3-2\gamma < 1$.
\end{proof}

\begin{remark}
Depending on $\gamma$, the above $L^2$-gradient estimates suggest that it might be not true that $|\nabla A| = O(d^{-3})$ as the  $ \int_{B_{2d}\setminus B_{d}} |\nabla A|^2$  may not decay as $\frac{1}{d^2} \int_{B_{2d}\setminus B_{d}} |A|^2$ .
\end{remark}

\section{Propagation of curvature over level sets}

In this section, we study the propagation of $L^{2N}$-norm of curvature over the level sets of $\rho$. To achieve this, we will derive   integral identities involving the $L^{2N}$-norm of curvature over these level sets. Throughout this section, we assume the conditions stated in Assumption (\ref{assumptions}).

\subsection{Integral identities in the dimension $n+1=4$}

In this subsection, we consider the case $n+1=4$ and take advantage of the Bach-flat equation for the Schouten tensor.

\begin{proposition}(n+1=4) \label{prop:derivatives}
Define the functions
$$
F (p) \coloneqq \int_{\rho \le p} \rho^{-1} |A|^2|\nabla\rho|^2 dV_g, 
$$
$$
G (p) \coloneqq \int_{\rho \le p} \rho^{-1} |R|^2|\nabla\rho|^2 dV_g.
$$
Assume that $C_0$ in the assumption (\ref{assumptions})  is sufficiently small. Then, for $p>0$, the following formulas hold:

(1) First derivative of $F$:
$$
F'(p) = \oint_{\rho = p }  \rho^{-1} |A|^2|\nabla\rho|. 
$$

(2) Second derivative of $F$:
\begin{align*}
F''(p) =& \frac{1}{p^2} \int_{\rho \le l }  2\rho  |\nabla A|^2 -\frac{1}{18}\rho |\nabla R|^2 -\frac{1}{3}  A(\nabla R, \nabla \rho)   + \rho A*A*Rm  \\
&+ \oint_{\rho = l} \frac{1}{3} \rho^{-1}  A(\nabla R, \nu) -  C_{n,\gamma}  \frac {R|A|^2}{|\nabla \rho| }. 
\end{align*}

(3) First derivative of $G$:
$$
G'(p) =  \oint_{\rho = q }  \rho^{-1} R^2|\nabla\rho| . 
$$

(4) Second derivative of $G$:
\begin{align*}
G''(p) = \frac{1}{p^2}\int_{\rho \le p}  2\rho  |\nabla R|^2   + \rho A*A*Rm  + \oint_{\rho = p} \frac{1}{\rho^2} (2\gamma-3)R^2 |\nabla \rho| -  C_{n,s }\frac {R^3}{|\nabla \rho| }. 
\end{align*}
\end{proposition}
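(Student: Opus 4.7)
The first two identities for $F'(p)$ and $G'(p)$ are immediate applications of the co-area formula. Writing $F(p) = \int_0^p t^{-1}\bigl(\oint_{\rho=t}|A|^2|\nabla\rho|\bigr)\,dt$ and differentiating in $p$ yields $F'(p) = p^{-1}\oint_{\rho=p}|A|^2|\nabla\rho|$, which on the level set $\{\rho = p\}$ equals $\oint_{\rho=p}\rho^{-1}|A|^2|\nabla\rho|$; the argument for $G'(p)$ is identical.

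For the second derivatives the plan is a matching argument comparing two independent computations of $p^2 F''(p)$. On one hand, set $H(p) = \oint_{\rho=p}|A|^2|\nabla\rho|$, so that $F'(p) = p^{-1}H(p)$ and $p^2 F''(p) = p H'(p) - H(p)$. Writing $H(p) = \int_{\rho\le p}\operatorname{div}(|A|^2\nabla\rho)$ (the trace at $\rho = 0$ vanishes by regularity, see below) and applying the co-area formula yields a purely surface-integral expression, after which Lemma \ref{lemma:eqs} is used to replace $\rho\Delta\rho$ by $-s\rho^2 R/(n(2s-n-1))$, producing the $-C_{n,\gamma}R|A|^2/|\nabla\rho|$ boundary contribution. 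On the other hand, combine the Bach-flat equation (Lemma \ref{lemma:eqcurvature}(3), valid for $n+1 = 4$) with Bochner to obtain $\Delta|A|^2 = 2|\nabla A|^2 + \tfrac{1}{n}A_{ij}\nabla_i\nabla_j R + 2\,A*A*Rm$. Multiplying by $\rho$ and integrating over $\{\rho\le p\}$, the Hessian term $\int\rho A_{ij}R_{,ij}$ is integrated by parts using the contracted Bianchi identity $A_{ij,j} = \tfrac{1}{2n}R_{,i}$ of Lemma \ref{lemma:eqcurvature}(2) to produce the bulk combination $\tfrac{1}{2n^2}\rho|\nabla R|^2 + \tfrac{1}{n}A(\nabla R,\nabla\rho)$ together with the surface contribution $\tfrac{1}{n}\oint\rho A(\nabla R,\nu)$; at $n = 3$ these coefficients specialize to $\tfrac{1}{18}$ and $\tfrac{1}{3}$. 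Applying Green's identity $\int\rho\Delta|A|^2 = \int|A|^2\Delta\rho + \oint\rho\partial_\nu|A|^2 - \oint|A|^2|\nabla\rho|$ and absorbing $\int|A|^2\Delta\rho$ — which by Lemma \ref{lemma:eqs} is a multiple of $\int\rho R|A|^2$ — into the generic $\rho\,A*A*Rm$ term matches the two expressions for $p^2 F''(p)$ and delivers the stated formula. For $G''(p)$ the scheme is parallel but simpler: test the scalar curvature equation of Lemma \ref{lemma:eqcurvature}(1) against $\rho R$; the drift term $(n+3-2s)\rho^{-1}R\,\nabla\rho\cdot\nabla R = \tfrac{n+3-2s}{2}\rho^{-1}\nabla R^2\cdot\nabla\rho$ integrates by parts to contribute the $(2\gamma-3)$ surface coefficient, while the right-hand side $-2n(2s-n-1)|A|^2 + \tfrac{n+1}{2n(2s-n-1)}R^2$ collapses into $\rho\,A*A*Rm$ and the leftover $\rho\Delta\rho/|\nabla\rho|$ yields the $-C_{n,s}R^3/|\nabla\rho|$ boundary term.

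The principal obstacle is not the algebra but the rigorous vanishing of boundary traces at $\rho = 0$ in every integration by parts; this is what forces the hypothesis $\gamma > 1$. By Lemma \ref{lem:adpreg3}, $\rho = x(F + x^{2\gamma}G)$ relative to a smooth geodesic defining function $x$, so that $A = O(x^{2\gamma-2})$ and $\nabla R = O(x^{2\gamma-3})$; hence each of the candidate traces $|A|^2|\nabla\rho|$, $\rho\partial_\nu|A|^2$, $\rho A(\nabla R,\nu)$, $\rho^2 R|A|^2$ is at worst $O(x^{4\gamma-4})$ and vanishes precisely when $\gamma > 1$. A secondary issue is that, since $\partial X = \mathbb{R}^n$ is non-compact, every bulk integral over $\{\rho\le p\}$ must genuinely converge; this is handled by the weighted $L^{2N}$-bounds and Caccioppoli-type gradient estimates of Propositions \ref{prop:curvaturebound} and \ref{prop:weightedgradient} together with Corollary \ref{lemma:rhogradient}, provided the curvature-decay constant $C_0$ in Assumption (\ref{assumptions}) is taken sufficiently small.
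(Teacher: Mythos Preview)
Your algebraic scheme is the same as the paper's: co-area for the first derivatives, then the Bach-flat identity $\Delta A_{ij}=\tfrac{1}{2n}\nabla_i\nabla_j R+A*Rm$, the Bochner expansion of $\Delta|A|^2$, one integration by parts on $\int\rho A_{ij}R_{,ij}$ using $A_{ij,j}=\tfrac{1}{2n}R_{,i}$, and Green's identity together with $\Delta\rho=-\tfrac{s}{2n(s-\frac{n+1}{2})}R\rho$ to produce the $C_{n,\gamma}R|A|^2/|\nabla\rho|$ and $(2\gamma-3)$ surface terms. The numerology at $n=3$ is right.

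Where you are thinner than the paper is in the passage to the non-compact slab $\{\rho\le p\}$. You write that ``every bulk integral over $\{\rho\le p\}$ must genuinely converge'' and cite Propositions~\ref{prop:curvaturebound}, \ref{prop:weightedgradient} and Corollary~\ref{lemma:rhogradient}. Absolute convergence of the bulk integrands is necessary but not sufficient: each time you invoke the divergence theorem or Green's identity on $\{\rho\le p\}$ you must also kill a lateral contribution at spatial infinity, and that step is missing from your sketch. The paper does this explicitly: it works on the truncated cylinders $C_r=\bigcup_{t\ge 0}\phi_t(B_r)$ with lateral boundary $L_r$, picks up the extra term $\oint_{\{\rho\le l\}\cap L_r}\rho\,\partial_\tau|A|^2$, bounds it by $\int_{\{\rho\le q\}\cap L_r}\rho|\nabla A||A|$, and then uses the $L^1$ finiteness of $\int_{\rho\le q}\rho|\nabla A||A|$ (which follows from exactly the propositions you cite, via Cauchy--Schwarz) together with Fubini to extract a sequence $r_i\to\infty$ along which the lateral term vanishes; dominated convergence then gives the identity on the full slab. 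Your proof proposal should either mention this truncation-plus-Fubini mechanism or replace it by an equivalent density/cutoff argument, since without it the integrations by parts are formal.
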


\begin{proof}

Note that $F$ and $G$ are well-defined for  sufficiently small $C_0$   by Proposition \ref{prop:curvaturebound}. Moreover, recall that $|\nabla \rho|$  has a positive lower bound close to 1 if $C_0$ is small.

(1) follows directly from the co-area formula:
$$
F(p) =  \int_{\rho \le p}  \rho^{-1} |A|^2|\nabla\rho|^2 dV_g = \int_0^p \big(\oint_{\rho = q }  \rho^{-1} |A|^2|\nabla\rho|\big) dq. 
$$

Now we prove (2). Let $\nu \coloneqq \frac{\nabla \rho}{|\nabla \rho|} $, $C_r = \cup_{t \ge 0} \phi_t (B_r)$ and $L_r = \cup_{t \ge 0} \phi_t (\partial B_r )$ be the normal vector, the cylinder and its lateral surface generated by the gradient flow of $\rho$, respectively.  Let $\tau$ be the outward normal vector along the lateral side $L_r$.
Since the normal vector of the lateral side of the cylinder $C_r$ is perpendicular to $\nabla \rho$, we obtain:
\begin{align*}
& \oint_{\{\rho = q\} \cap C_r }  \rho |A|^2|\nabla\rho| - \oint_{\{\rho = p\} \cap C_r } \rho  |A|^2|\nabla\rho|\\
 =& \int_{\{p \le \rho \le q\} \cap C_r} \mathrm{div}\big( \rho |A|^2 \nabla \rho \big) \\ 
=& \int_{\{p \le \rho \le q\} \cap C_r}  \rho (\partial_{\nu} |A|^2) |\nabla \rho| + \rho |A|^2 \Delta \rho +  |A|^2 |\nabla \rho|^2  \\ 
= & \int_{\{p \le \rho \le q\} \cap C_r}  \rho (\partial_{\nu} |A|^2) |\nabla \rho| -  C_{n,\gamma} \rho^2  \frac { R|A|^2}{2} +|A|^2 |\nabla \rho|^2 \\
= & \int_p^q \big( \oint_{\{\rho = l\} \cap C_r} \rho \partial_{\nu} |A|^2 -  C_{n,\gamma} \rho^2 \frac {R|A|^2}{|\nabla \rho| }+  |A|^2 |\nabla \rho| \big) dl.
\end{align*}
Using the Bach-flatness $\Delta A = \frac{1}{6} \nabla^2 R + A*Rm$ and the boundary condition $A = 0$ (Proposition \ref{boundarycondition}), we have:
\begin{align*}
&\oint_{\{\rho = l\} \cap C_r} \rho \partial_{\nu} |A|^2 \\
=& \int_{\{\rho \le l\} \cap C_r} \mathrm{div}( \rho \nabla|A|^2) - \oint_{\{\rho \le l\} \cap L_r} \rho \partial_\tau |A|^2 \\
=& \int_{\{\rho \le l\} \cap C_r}  \rho \Delta|A|^2 + \nabla \rho\cdot \nabla |A|^2 - \oint_{\{\rho \le l\} \cap L_r} \rho \partial_\tau |A|^2 \\
=&  \int_{\{\rho \le l\} \cap C_r}  \rho \Delta|A|^2 -  |A|^2 \Delta \rho  + \oint_{\{\rho = l\} \cap C_r}  |A|^2 |\nabla \rho|  - \oint_{\{\rho \le l\} \cap L_r} \rho \partial_\tau |A|^2  \\
=&  \int_{\{\rho \le l\} \cap C_r}  \rho  \Delta|A|^2 -    C_{n,\gamma} \frac {\rho R|A|^2}{2} + \oint_{\{\rho = l\} \cap C_r}  |A|^2 |\nabla \rho| - \oint_{\{\rho \le l\} \cap L_r} \rho \partial_\tau |A|^2 \\
=&  \int_{\{\rho \le l\} \cap C_r}  2\rho  |\nabla A|^2  + \frac{1}{3} \rho   A_{ij}\nabla_i \nabla_j R + \rho A*A*Rm  \\
&   + \oint_{\{\rho = l\} \cap C_r}  \rho |A|^2 |\nabla \rho| - \oint_{\{\rho \le l\} \cap L_r} \rho \partial_\tau |A|^2 \\
=&  \int_{\{\rho \le l\} \cap C_r}  2\rho  |\nabla A|^2 -\frac{1}{18}\rho |\nabla R|^2 -\frac{1}{3}  A(\nabla R, \nabla \rho)   + \rho A*A*Rm  \\
&   + \oint_{\{\rho = l\} \cap C_r}  |A|^2 |\nabla \rho| + \frac{1}{3} \rho  A(\nabla R, \nu)  - \oint_{\{\rho \le l\} \cap L_r} \rho \partial_\tau |A|^2. 
\end{align*}
Now, we observe that the inequality
\begin{align*}
\big |\int_p^q \big( \oint_{\{\rho \le l\} \cap L_r} \rho \partial_\tau |A|^2 \big) dl \big| \le 2\int_p^q \big( \oint_{\{\rho \le l\} \cap L_r} \rho |\nabla A| |A| \big)  dl \le C q  \int_{\{\rho \le q\} \cap L_r} \rho |\nabla A| |A| 
\end{align*}
holds. Furthermore, by Proposition \ref{prop:curvaturebound} and Corollary \ref{lemma:rhogradient}, together with (\ref{assumptions}) for sufficiently small $C_0$ and Proposition \ref{lemma:metric}, we have
$$\int_0^{\infty} \big( \int_{\{\rho \le q\} \cap L_r} \rho |\nabla A| |A|  \big) dr < \infty.$$
Hence, by Fubini's theorem, there exists  a sequence  $r_i \rightarrow \infty$ such that $\int_p^q \big( \oint_{\{\rho \le l\} \cap L_r} \rho \partial_\tau |A|^2 \big) dl $ evaluated at $r = r_i$ converges to 0.

Since all other integrands appearing in the identity 
$$\int_p^q \big( \oint_{\{\rho = l\} \cap C_r} \rho \partial_{\nu} |A|^2 -  C_{n,\gamma} \rho^2 \frac {R|A|^2}{|\nabla \rho| }+  \rho^{-1}|A|^2 |\nabla \rho| \big) dl$$
belong to $L^1$, we apply the Lebesgue dominated convergence theorem to obtain:	
\begin{align*}
& \oint_{\rho = q  }  \rho |A|^2|\nabla\rho| - \oint_{\rho = p  } \rho  |A|^2|\nabla\rho|\\
= & \int_p^q   \bigg[\int_{\{\rho \le l\} \cap C_r}  2\rho  |\nabla A|^2 -\frac{1}{18}\rho |\nabla R|^2 -\frac{1}{3}  A(\nabla R, \nabla \rho)   + \rho A*A*Rm  \\
&   + \oint_{\rho = l} 2 |A|^2 |\nabla \rho| + \frac{1}{3} \rho  A(\nabla R, \nu) -  C_{n,\gamma} \rho^2 \frac {R|A|^2}{|\nabla \rho| }\bigg] dl. 
\end{align*}
This proves that
\begin{align*}
(p^2 F'(p))' =& \int_{\rho \le p}  2\rho  |\nabla A|^2 -\frac{1}{18}\rho |\nabla R|^2 -\frac{1}{3}  A(\nabla R, \nabla \rho)   + \rho A*A*Rm  \\
&+ \oint_{\rho = p}  2|A|^2 |\nabla \rho| + \frac{1}{3} \rho  A(\nabla R, \nu) -  C_{n,\gamma} \rho^2 \frac {R|A|^2}{|\nabla \rho| }.
\end{align*}%
Thus, (2) is proved.

Similarly, for the scalar curvature $R$, we obtain:
\begin{align*}
& \oint_{\{\rho = q\} \cap C_r }  \rho R^2|\nabla\rho| - \oint_{\{\rho = p\} \cap C_r } \rho  R^2|\nabla\rho|\\
= & \int_p^q \big( \oint_{\{\rho = l\} \cap C_r} \rho \partial_{\nu} R^2 -  C_{n,\gamma} \rho^2 \frac {R^3}{|\nabla \rho| }+ R^2 |\nabla \rho| \big) dl, 
\end{align*}
and
\begin{align*}
\oint_{\rho = p} \rho \partial_{\nu} R^2=  & \int_{\rho \le p} \mathrm{div}(\rho \nabla R^2 ) = \int_{\rho \le p } \rho \Delta R^2 + \nabla \rho \cdot \nabla R^2 \\
= &  \int_{\rho \le p } 2\rho |\nabla R |^2 + 2\rho R\Delta R   + \nabla \rho \cdot \nabla R^2 \\
=& \int_{\rho \le p } 2\rho |\nabla R|^2 + \rho Rm*A*A  +(2\gamma-2)  \nabla \rho \cdot \nabla R^2 \\
=& \int_{\rho \le p } 2\rho |\nabla R|^2 + \rho Rm*A*A  - (2\gamma-2)  \Delta \rho R^2 \\
& + (2\gamma-2) \oint_{\rho = p}  R^2 |\nabla \rho | \\
= &  \int_{\rho \le p}  2\rho |\nabla R|^2 +\rho Rm*A*A +  (2\gamma -2) \oint_{\rho = p}  R^2 |\nabla \rho |.
\end{align*}
Hence, we derive the final differential equation:
\begin{align*}
(p^2 G'(p))' =& \int_{\rho \le p}  2\rho  |\nabla R|^2   + \rho A*A*Rm  + \oint_{\rho = p}  (2\gamma-1)R^2 |\nabla \rho| -  C_{n,\gamma} \rho^2 \frac {R^3}{|\nabla \rho| }. 
\end{align*}
\end{proof}

\subsection{Integral identities in general dimensions}

We closely follow the argument from the previous subsection. The main difference is that, instead of using Bach-flatness, we now rely on Lemma \ref{lemma:eqcurvature}-(4). We begin by establishing the following algebraic inequality.

\begin{lemma}\label{lemma:A^3} 
For the adapted boundary compactification of a Poincar\'{e}-Einstein manifold with parameter $s=\frac{n}{2}+\gamma$, we have: for $\epsilon>0$ small
\begin{align*}
\nabla_m \rho A_{mi} A_{ij} \nabla_j |A|^2 & \ge C_{n,\gamma}\big[ -\frac{1}{\rho}(1+\frac{1}{\epsilon}) R^2 |\nabla \rho|^2 |A|^2 - \rho R^2 |\nabla A|^2  -\epsilon \rho |\nabla A|^2 |A|^2 \\
&  - \frac{ \rho }{\epsilon } |\nabla R|^2 |A|^2 \big].
\end{align*}

\end{lemma}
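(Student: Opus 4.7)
The plan is to reduce the claim to four weighted Young's inequalities, after first obtaining an explicit formula for the vector $A_{mi}\nabla_m\rho$ in terms of $R$, $\nabla R$, and $\nabla\rho$. To derive this formula, I would combine the three identities of Lemma \ref{lemma:eqs}. Setting $\gamma_0 = s-\tfrac{n+1}{2}$, differentiating $R = 2n\gamma_0\rho^{-2}(1-|\nabla\rho|^2)$ gives
\begin{equation*}
\tfrac{1}{2}\nabla_i|\nabla\rho|^2 \;=\; \nabla_m\nabla_i\rho\,\nabla_m\rho \;=\; -\tfrac{R\rho}{2n\gamma_0}\nabla_i\rho - \tfrac{\rho^2}{4n\gamma_0}\nabla_i R,
\end{equation*}
while $\Delta\rho = -\tfrac{sR\rho}{2n\gamma_0}$. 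Substituting these into $E_{mi}\nabla_m\rho = -(n-1)\rho^{-1}\bigl(\nabla_m\nabla_i\rho\,\nabla_m\rho - \tfrac{\Delta\rho}{n+1}\nabla_i\rho\bigr)$ and then using $A = \tfrac{1}{n-1}E + \tfrac{R}{2n(n+1)}g$ yields
\begin{equation*}
A_{mi}\nabla_m\rho \;=\; c_0\,R\,\nabla_i\rho + c_1\,\rho\,\nabla_i R
\end{equation*}
for explicit constants $c_0, c_1$ depending only on $n$ and $\gamma$.

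I would then expand the remaining $A_{ij}$ in $\nabla_m\rho\,A_{mi}A_{ij}\nabla_j|A|^2$ via the same decomposition $A = \tfrac{1}{n-1}E + \tfrac{R}{2n(n+1)}g$, splitting the left-hand side into the four scalar pieces $\tfrac{c_0 R}{n-1}E(\nabla\rho,\nabla|A|^2)$, $\tfrac{c_0 R^2}{2n(n+1)}\nabla\rho\cdot\nabla|A|^2$, $\tfrac{c_1 \rho}{n-1}E(\nabla R,\nabla|A|^2)$, and $\tfrac{c_1 \rho R}{2n(n+1)}\nabla R\cdot\nabla|A|^2$. For the three pieces carrying a factor of $E$, I would use the pointwise bound $|E|\le(n-1)|A|$ (which follows from the trace-freeness of $E$ via $|A|^2 = \tfrac{|E|^2}{(n-1)^2} + \tfrac{R^2}{4n^2(n+1)}$) together with $|\nabla|A|^2|\le 2|A||\nabla A|$, producing pointwise bounds of the form $R|A|^2|\nabla\rho||\nabla A|$, $\rho|A|^2|\nabla R||\nabla A|$, and $\rho R|A||\nabla R||\nabla A|$ respectively. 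For the second piece, coming from the pure $g$-component, only $|\nabla|A|^2|\le 2|A||\nabla A|$ is used, giving the bound $R^2|A||\nabla\rho||\nabla A|$ \emph{without} an extra $|A|^2$ factor.

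Finally, I would apply the weighted Young inequality $xy \le \tfrac{x^2}{2\delta}+\tfrac{\delta y^2}{2}$ to each of the four bounds, carefully distributing a power $\rho^{\pm 1/2}$ between the two factors. For the three pieces bearing $|A|^2$, the choice $\delta = \epsilon$ yields $\epsilon\rho|A|^2|\nabla A|^2$ on one side and either $\tfrac{R^2|\nabla\rho|^2|A|^2}{\epsilon\rho}$ or $\tfrac{\rho|\nabla R|^2|A|^2}{\epsilon}$ on the other. For the pure-scalar-curvature piece $R^2|A||\nabla\rho||\nabla A|$, the split $(R|\nabla A|\rho^{1/2})(R|A||\nabla\rho|\rho^{-1/2})$ with $\delta = 1$ produces the $\epsilon$-free pair $\rho R^2|\nabla A|^2$ (without $|A|^2$) and $\rho^{-1}R^2|\nabla\rho|^2|A|^2$, which account precisely for the constant "$1$" inside $1+\tfrac{1}{\epsilon}$ in the first term of the stated bound and for the $|A|^2$-free second term. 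Summing with the $(n,\gamma)$-dependent constants absorbed into a single $C_{n,\gamma}$ yields the desired inequality.

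The only nontrivial step is the algebraic derivation of the identity $A_{mi}\nabla_m\rho = c_0R\nabla_i\rho + c_1\rho\nabla_i R$, which requires combining all three identities of Lemma \ref{lemma:eqs}; this is mechanical but must be tracked carefully. The remainder of the proof is a routine but delicate sequence of weighted Young inequalities, where the particular splitting has to be chosen for each of the four pieces so as to reproduce exactly the four terms appearing in the statement — especially the anomalous $\epsilon$-independent contributions, which arise solely from the pure $g$-components of the $A\otimes A$ decomposition.
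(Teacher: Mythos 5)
Your proposal is correct, and it takes a genuinely different---though algebraically cognate---route from the paper's proof. The paper decomposes \emph{both} copies of $A$ in $(A^2)(\nabla\rho,\nabla|A|^2)$ via $A=\tfrac{1}{n-1}E+\tfrac{R}{2n(n+1)}g$, yielding three scalar pieces $E^2(\nabla\rho,\nabla|A|^2)$, $RE(\nabla\rho,\nabla|A|^2)$, and $R^2\,\nabla\rho\cdot\nabla|A|^2$. The $E^2$ piece is then handled by a matrix-level Cauchy--Schwarz (using the positive semidefiniteness of $(E^2)_{ij}=E_{ik}E_{jk}$) followed by an invocation of Lemma~\ref{lemma:algebraicineq} to bound $\rho^{-1}E^2(\nabla\rho,\nabla\rho)|A|^2$ by $\rho|\nabla R|^2|A|^2 + \rho^{-1}R^2|\nabla\rho|^2|A|^2$. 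You instead first linearize $A_{mi}\nabla_m\rho = c_0R\nabla_i\rho + c_1\rho\nabla_iR$ (which is precisely the vector identity whose square underlies the paper's Lemma~\ref{lemma:algebraicineq}), and only then decompose the remaining $A$, giving four scalar pieces all handled by ordinary weighted Young; this sidesteps both the matrix Young step and the explicit appeal to Lemma~\ref{lemma:algebraicineq}. Your derivation of the identity from Lemma~\ref{lemma:eqs} checks out: indeed one finds $c_0=c_1=\tfrac{1}{4n(s-\frac{n+1}{2})}$, so $A(\nabla\rho,\cdot)=\tfrac{1}{4n(s-\frac{n+1}{2})}\nabla(R\rho)$, both coefficients positive for $\gamma>1$. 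One small slip in the write-up: you refer to ``the three pieces carrying a factor of $E$,'' but among your four scalar pieces only two (the ones coming from $\tfrac{1}{n-1}E$ on the second $A$) actually carry $E$; the third piece you list, $\rho R|A||\nabla R||\nabla A|$, comes from the $g$-component, not the $E$-component, and bears $|A|^1$ rather than $|A|^2$. The bounds and the Young splits you then write are nonetheless correct as stated (using $R^2\le C_n|A|^2$, or a $\delta=1$ split, to handle that piece), so the conclusion goes through.
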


\begin{proof}
Recall the identity $A_{ij}=\frac{1}{n-1}E_{ij}+\frac{1}{2n(n+1)} R g_{ij}$. By summing the estimates derived below, we obtain the desired result.
$$R^2 \nabla \rho \cdot \nabla |A|^2 \ge R^2 |\nabla \rho| |\nabla |A|^2| \ge -\frac{1}{\rho} R^2 |\nabla \rho|^2 |A|^2 - \rho R^2 |\nabla A|^2,  $$
$$RE(\nabla \rho, \nabla |A|^2) \ge -2 R|\nabla \rho| |\nabla A||A|^2  \ge -\epsilon \rho |\nabla A|^2 |A|^2 - \frac{1}{\epsilon\rho } R^2 |\nabla \rho|^2 |A|^2, $$
\begin{align*}
E^2 (\nabla \rho, \nabla |A|^2) &= 2  \sum_{a,b} \sum_{i, j, k} E_{ik} E_{jk} (A_{ab}\nabla_i \rho) \nabla_j A_{ab}\\
 &\ge - \frac{1}{\epsilon \rho}  \sum_{a,b}\sum_{i, j, k}  E_{ik} E_{jk} (A_{ab}\nabla_i \rho) (A_{ab} \nabla_j \rho) - \epsilon \rho E_{ik}E_{jk} \nabla_i A_{ab}\nabla_j A_{ab} \\
 &\ge - \frac{1}{\epsilon \rho} E^2(\nabla \rho, \nabla \rho) |A|^2 - \epsilon \rho |A|^2 |\nabla A|^2 \\
&\ge - \frac{C_{n,\gamma} \rho }{\epsilon } |\nabla R|^2 |A|^2 - \frac{C_{n,\gamma}}{\epsilon \rho} R^2 |A|^2 |\nabla \rho|^2 - \epsilon \rho |A|^2 |\nabla A|^2, 
\end{align*}
where we applied apply Lemma \ref{lemma:algebraicineq} to handle  $\frac{1}{\rho} E^2(\nabla \rho, \nabla \rho) |A|^2$. 
\end{proof}


\begin{proposition}\label{prop:highdimderivatives}
For non-negative integers $k, l$, define
$$F_{k,l} (p) = \int_{\rho \le p } \rho^{-1} |A|^{2k} R^{2l}  |\nabla \rho|^2 dV_g. $$
Assume $C_0$ in Assumption (\ref{assumptions}) is sufficiently small. Then, for $p>0$, the following formulas hold:

(1) First derivative:
$$F_{k,l}'(p) = \oint_{\rho = p } \rho^{-1} |A|^{2k} R^{2l}  |\nabla \rho| dV_g.  $$

(2) Second derivative: for any $\epsilon_k>0$, 
\begin{align*}
F_{k,l}''(p) \ge & \frac{1}{p^2} \int_{\rho \le p} k(k-1) \rho R^{2l} |A|^{2k-4} |\nabla |A|^2|^2 +k \rho R^{2l}|A|^{2k-2} |\nabla A|^2  \\
&+ l(2l-1)\rho |A|^{2k}  R^{2l-2} |\nabla R|^2   \\
&-k C_{k, l, n, s}  \big[  |A|^{2k-2} R^{2l} \big(\rho |\nabla R|^2 + \frac{1}{\rho} R^2 |\nabla \rho|^2 \big)  +R^{2l+2} |A|^{2k-4}  |\nabla A|^2 \big]\\
& -  klC_{k,l,n,\gamma}  \big[ \epsilon_k |A|^{2k} R^{2l-2} \big(\frac{1}{\rho} |A|^2 |\nabla \rho|^2 +\rho |\nabla A|^2 \big)+\frac{1} {\epsilon_k }\rho  |A|^{2k-2} R^{2l} |\nabla R|^2  \big] \\
& +C_{k,l,n,\gamma} \rho A^{2k+2l}*Rm  \\
&+ \frac{1}{p^2} \oint_{\rho = p } \frac{k}{n}  |A|^{2k-2}  R^{2l}\rho A(\nabla R, \nu) -2k(n-3)   |A|^{2k-2}R^{2l}  \nabla_m \rho A_{mi} A_{ij} \nu_j\\
&+(2\gamma-3)|A|^{2k} R^{2l} |\nabla \rho| + 2(1-\delta_{0k})(n-s)|A|^{2k}R^{2l} |\nabla \rho| \\
&  -C_{k,l,n,\gamma}\frac{\rho^2}{|\nabla \rho|} A^{2k+2l}*Rm. 
\end{align*}

\end{proposition}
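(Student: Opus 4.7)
The plan is to follow the template of Proposition \ref{prop:derivatives} from the case $n+1=4$, replacing the Bach-flat identity $\Delta A=\tfrac{1}{6}\nabla^2 R+A\ast Rm$ by the general Schouten equation of Lemma \ref{lemma:eqcurvature}(4), while bookkeeping carefully the mixed power $|A|^{2k}R^{2l}$. The first derivative formula (1) is immediate from the co-area formula once one checks that $F_{k,l}$ is well defined and $C^1$; this follows from Proposition \ref{prop:curvaturebound}, applied with $N$ large enough that $|A|^{2k}R^{2l}$ is dominated by $|A|^{2(k+l)}$, together with Lemma \ref{lemma:basic} to bound $|\nabla\rho|$ from below.

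For the second derivative, write $p^2 F_{k,l}'(p)=\oint_{\rho=p}\rho|A|^{2k}R^{2l}|\nabla\rho|$ using $\rho\equiv p$ on the level set, and apply the divergence theorem on the truncated slab $\{p\le\rho\le q\}\cap C_r$, where $C_r=\cup_{t\ge 0}\phi_t(B_r)$ is the cylinder generated by the gradient flow of $\rho$ over a boundary ball $B_r\subset\oR$. Since the outward normal on the lateral surface $L_r=\cup_{t\ge 0}\phi_t(\partial B_r)$ is orthogonal to $\nabla\rho$, the lateral contribution of the vector field $\rho|A|^{2k}R^{2l}\nabla\rho$ vanishes and matters reduce to the slab integral of $\mathrm{div}(\rho|A|^{2k}R^{2l}\nabla\rho)$. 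Substituting $\Delta\rho=-s\rho^{-1}(1-|\nabla\rho|^2)$ from Lemma \ref{lemma:eqs}(1), rewriting the slab integral via coarea as $\int_p^q\oint_{\{\rho=l\}\cap C_r}\rho\,\partial_\nu(|A|^{2k}R^{2l})\,dl$ plus explicit level-set terms, and integrating by parts a second time on $\{\rho\le l\}\cap C_r$, the whole calculation is reduced to $\int\rho\,\Delta(|A|^{2k}R^{2l})$ over the slab plus lateral boundary terms on $L_r$.

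Expanding $\Delta(|A|^{2k}R^{2l})$ by the product and chain rules yields the positive gradient squares $k(k-1)|A|^{2k-4}R^{2l}|\nabla|A|^2|^2$, $k|A|^{2k-2}R^{2l}|\nabla A|^2$ and $l(2l-1)|A|^{2k}R^{2l-2}|\nabla R|^2$. The residual $A_{ij}\Delta A_{ij}$ and $R\Delta R$ contributions are then substituted via Lemma \ref{lemma:eqcurvature}(4) and (1): this produces the allowed $\rho\, A^{2k+2l}\ast Rm$ term, the distinguished cubic $k|A|^{2k-2}R^{2l}\nabla_m\rho\,A_{mi}A_{ij}\nabla_j|A|^2$ coming from the $(n-3)\rho^{-1}\langle\nabla\rho,\nabla A\rangle$ piece of Lemma \ref{lemma:eqcurvature}(4), and mixed $\nabla R$ cross-terms from the $\tfrac{1}{2n}\nabla^2 R$ piece. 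The cubic term is exactly what Lemma \ref{lemma:A^3} is designed to control: with $\epsilon_k$ small, the unwanted $\rho|\nabla A|^2|A|^{2k}R^{2l-2}$ factor is absorbed by the positive gradient terms on the left. The leftover indefinite $\rho|\nabla R|^2$ and $\rho^{-1}R^2|\nabla\rho|^2$ contributions are handled by Lemma \ref{lemma:algebraicineq}, combined with the identity $R=2n(s-\tfrac{n+1}{2})\rho^{-2}(1-|\nabla\rho|^2)$ from Lemma \ref{lemma:eqs}(2), which allows one to trade $\rho^{-1}R^2|\nabla\rho|^2$ for curvature-density terms already present.

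The final step is to justify the limit $r\to\infty$. The second integration by parts produces lateral boundary terms on $L_r$ of the form $\oint_{\{\rho\le l\}\cap L_r}\rho\,\partial_\tau(|A|^{2k}R^{2l})$, controlled by $\rho|\nabla A||A|^{2k-1}R^{2l}+\rho|\nabla R||A|^{2k}R^{2l-1}$. Choosing $N$ large enough depending on $k,l$, Corollary \ref{lemma:rhogradient} and Proposition \ref{prop:curvaturebound}, together with Proposition \ref{lemma:metric}, show these integrands lie in $L^1(X)$, so by Fubini there is a sequence $r_i\to\infty$ along which the lateral contributions vanish. Lebesgue dominated convergence then gives the identity for $(p^2 F_{k,l}'(p))'$, and differentiating in $p$ yields the stated lower bound, the inequality sign being forced only by the application of Lemma \ref{lemma:A^3}. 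The main obstacle is the algebraic bookkeeping: one must verify that the $\rho^{-2}$-scaled cross terms from Lemma \ref{lemma:eqcurvature}(4), paired with the $R$-factors coming from $R=2n(s-\tfrac{n+1}{2})\rho^{-2}(1-|\nabla\rho|^2)$, assemble into exactly the displayed boundary coefficients $(2\gamma-3)$ and $2(1-\delta_{0k})(n-s)$, which is the natural higher-dimensional generalization of the $(2\gamma-1)$ factor in Proposition \ref{prop:derivatives}(4).
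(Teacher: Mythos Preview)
Your outline matches the paper's proof essentially step for step: divergence theorem on the slab $\{p\le\rho\le q\}\cap C_r$, expansion of $\Delta(|A|^{2k}R^{2l})$ via Lemma \ref{lemma:eqcurvature}(1),(4), control of the cubic by Lemma \ref{lemma:A^3}, and the Fubini/dominated-convergence argument to kill the lateral terms. Two small corrections worth flagging before you execute: the cubic $\nabla_m\rho\,A_{mi}A_{ij}\nabla_j|A|^2$ actually appears after integrating by parts the $\rho^{-1}\rho^m A_{mi,j}$ piece of Lemma \ref{lemma:eqcurvature}(4) (not the $\langle\nabla\rho,\nabla A_{ij}\rangle$ piece, which instead produces the $(2\gamma-3)$ and $2(1-\delta_{0k})(n-s)$ boundary coefficients); and $\epsilon_k$ in the statement is a \emph{free} parameter that survives to the final inequality---the absorption you describe uses a separate internal $\epsilon$ in Lemma \ref{lemma:A^3}, and several further inequality signs come from ordinary Young's inequalities applied to the $A_{ij}\nabla_i\nabla_j R$ and $\nabla|A|^2\cdot\nabla R^2$ cross terms.
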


\begin{proof}

The proof follows the same approach as in the $n+1=4$ case, but with additional algebraic complexity.

For the scalar curvature, we compute
\begin{align*}
\Delta R^{2l} =& 2l R^{2l-1} \Delta R + 2l(2l-1) R^{2l-2} |\nabla R|^2 \\
=& 	  -2l (n+3-2s)\rho^{-1}\langle d\rho, dR\rangle R^{2l-1}+  2l(2l-1) R^{2l-2} |\nabla R|^2 + Rm*A^{2l}.
\end{align*}
For the Schouten tensor, we have:
\begin{align*}
\Delta |A|^{2k} =& 2k |A|^{2k-2} A_{ij} \Delta A_{ij}  + 2k|A|^{2k-2} |\nabla A|^2  + k(k-1) |A|^{2k-4} |\nabla A^2|^2 \\
=& kI_1 + 2k|A|^{2k-2} |\nabla A|^2  + k(k-1) |A|^{2k-4} |\nabla A^2|^2  + Rm*A^{2k}
\end{align*}
where 
$$I_1 = \big[ \frac{1}{n} \nabla_i \nabla_j R -(n-3)\rho^{-1}\rho^m\left(A_{mi,j}+A_{mj,i}\right)+2(n-3)\rho^{-1}\langle \nabla \rho, \nabla A_{ij}\rangle \big] A_{ij}  |A|^{2k-2}.$$

Using integration by parts, we obtain:
\begin{align*}
& \oint_{\rho = p} \rho \partial_{\nu} (|A|^{2k} R^{2l}) = \int_{\rho \le p} \mathrm{div}( \rho \nabla (|A|^{2k} R^{2l}) ) \\
=& 
\int_{\rho \le p}  \rho \Delta (|A|^{2k} R^{2l}) + \nabla \rho \cdot \nabla (|A|^{2k} R^{2l}) \\
=&
 \int_{\rho \le p}\rho  R^{2l}\Delta |A|^{2k} + 2kl \rho  |A|^{2k-2} R^{2l-2} \nabla |A|^2 \cdot \nabla R^2 + \rho |A|^{2k}\Delta R^{2l}  \\
&
- \Delta  \rho  (|A|^{2k} R^{2l}) + \oint_{\rho = p}  (|A|^{2k} R^{2l}) |\nabla \rho| \\
=&
 \int_{\rho \le p}  k \rho R^{2l}I_1   -2l (n+3-2s) |A|^{2k} R^{2l-1} \nabla \rho \cdot \nabla R  + \rho I_2   - \rho Rm*A^{2k+2l}  \\
&
 + \oint_{\rho = p} (|A|^{2k} R^{2l}) |\nabla \rho| , 
\end{align*}
where 
\begin{align*}
I_2 =&   2kl |A|^{2k-2} R^{2l-2} \nabla |A|^2 \cdot \nabla R^2 +R^{2l}\big[2k|A|^{2k-2} |\nabla A|^2  \\
&+ k(k-1) |A|^{2k-4} |\nabla |A|^2|^2  \big] + 2l(2l-1)|A|^{2k}  R^{2l-2} |\nabla R|^2. 
\end{align*}
We analyze each term systematically by applying integration by parts and algebraic inequalities to express them in the desired form. 

We compute each term of $I_1$:
\begin{align*}
&\int_{\rho \le p} 2k(n-3)  R^{2l}|A|^{2k-2} \nabla_m \rho A_{ij,m} A_{ij} -2l (n+3-2s) |A|^{2k} R^{2l-1} \nabla \rho \cdot \nabla R  \\
= & \int_{\rho \le p}  2k(n-s) R^{2l}|A|^{2k-2} \nabla |A|^2 \cdot \nabla \rho -(n+3-2s)  \nabla (R^{2l}|A|^{2k} ) \cdot \nabla \rho\\
= & \int_{\rho \le p}  2k(n-s) R^{2l}|A|^{2k-2} \nabla |A|^2 \cdot \nabla \rho + C_{k,l,n,\gamma} Rm * A^{2k+2l} + \oint_{\rho = p} (2\gamma-3 ) R^{2l}|A|^{2k} |\nabla \rho| \\
= & \int_{\rho \le p} - 2(n-s) (1-\delta_{0k}) |A|^{2k} \nabla R^{2l} \cdot \nabla \rho + C_{k,l,n,\gamma} Rm * A^{2k+2l} \\
& + \oint_{\rho = p} (2\gamma-3 ) R^{2l}|A|^{2k} |\nabla \rho|  + 2(1-\delta_{0k})(n-s)R^{2l}|A|^{2k} |\nabla \rho| , 
\end{align*}
where $(1-\delta_{0k})$ is 0 if $k=0$. And
\begin{align*}
&
\int_{\rho \le p}  \rho R^{2l}   |A|^{2k-2} A_{ij}\nabla_i \nabla_j R  \\
=&
 \int_{\rho \le p} - \frac{1}{2n} \rho R^{2l}   |A|^{2k-2} |\nabla R|^2  - R^{2l}   |A|^{2k-2} A_{ij} \nabla_j R \cdot \nabla_i \rho  \\
&
-  (k-1)\rho R^{2l}   |A|^{2k-4} A_{ij}\nabla_j R \nabla_i |A|^2 -2l \rho R^{2l-1}   |A|^{2k-2} A_{ij}\nabla_j R \nabla_i R   \\
& + \oint_{\rho = p} \rho R^{2l} |A|^{2k-2} A(\nabla R, \nu) \\
\ge &  \int_{\rho \le p} - \frac{1}{2n} \rho R^{2l}   |A|^{2k-2} |\nabla R|^2  - R^{2l}   |A|^{2k-2} A_{ij} \nabla_j R \cdot \nabla_i \rho  \\
&
-  \epsilon \rho  R^{2l} |A|^{2k-2} |\nabla A|^2 - \frac{C_{k,l}}{\epsilon}  \rho |A|^{2k-2} R^{2l} |\nabla R|^2 - \epsilon \rho |A|^{2k} R^{2l-2} |\nabla R|^2 \\
& + \oint_{\rho = p} \rho R^{2l} |A|^{2k-2} A(\nabla R, \nu)
\end{align*}
using Young's inequality. Next, we have
\begin{align*}
& 
\int_{\rho \le p}  R^{2l}|A|^{2k-2} \nabla_m \rho A_{mi,j} A_{ij} \\
=& \int_{\rho \le p} - R^{2l}|A|^{2k-2} \nabla_j  \nabla_m \rho A_{mi} A_{ij} - \frac{1}{2n}  R^{2l}|A|^{2k-2} \nabla_m \rho A_{mi} \nabla_i R  \\
&
 -2l  R^{2l-1}|A|^{2k-2}  A_{mi} A_{ij} \nabla_j R \nabla_m \rho - (k-1)  R^{2l}|A|^{2k-4} \nabla_m \rho A_{mi} A_{ij} \nabla_j |A|^2 \\
&
 + \oint_{\rho = p }  R^{2l} |A|^{2k-2} \nabla_m \rho A_{mi} A_{ij} \nu_j \\
=& \int_{\rho \le p} C_{k,l,n,\gamma}\rho Rm * A^{2k+2l} - \frac{1}{2n}  R^{2l}|A|^{2k-2} A_{ij} \nabla_i R  \nabla_i \rho \\
&
 -2l  R^{2l-1}|A|^{2k-2}  A_{mi} A_{ij} \nabla_j R \nabla_m \rho - (k-1)  R^{2l}|A|^{2k-4} \nabla_m \rho A_{mi} A_{ij} \nabla_j |A|^2 \\
&
 + \oint_{\rho = p }  R^{2l} |A|^{2k-2} \nabla_m \rho A_{mi} A_{ij} \nu_j.
\end{align*}
Summing the above computations, we obtain:
\begin{align*}
& \int_{\rho \le p}  k\rho R^{2l}I_1 -2l (n+3-2s) |A|^{2k} R^{2l-1} \nabla \rho \cdot \nabla R  \\
\ge & k\int_{\rho \le p}   - \frac{1}{2n^2 } \rho R^{2l}   |A|^{2k-2} |\nabla R|^2  +\frac{n-4}{n} R^{2l}   |A|^{2k-2} A_{ij} \nabla_j R \cdot \nabla_i \rho \\
& -  \epsilon \rho |A|^{2k-2} R^{2l} |\nabla A|^2  - \epsilon \rho  |A|^{2k} R^{2l-2} |\nabla R|^2 -  \frac{C_{k,l}}{\epsilon} \rho |A|^{2k-2} R^{2l} |\nabla R|^2 \\
& +4l(n-3)  R^{2l-1}|A|^{2k-2}  A_{mi} A_{ij} \nabla_j R \nabla_m \rho \\
&+2 (k-1)(n-3)  R^{2l}|A|^{2k-4} \nabla_m \rho A_{mi} A_{ij} \nabla_j |A|^2\\
& - \frac{2(n-s)}{k} (1-\delta_{0k}) |A|^{2k} \nabla R^{2l} \cdot \nabla \rho +C_{k,l,n,\gamma}\rho A^{2k+2l}*Rm  \\
&+ \oint_{\rho = p } \frac{k}{n} \rho R^{2l} |A|^{2k-2} A(\nabla R, \nu) -2k(n-3)  R^{2l} |A|^{2k-2} \nabla_m \rho A_{mi} A_{ij} \nu_j  \\
&+ (1-\delta_{0k})(n-s)R^{2l}|A|^{2k} |\nabla \rho|  + (2\gamma-3 ) R^{2l}|A|^{2k} |\nabla \rho|.
\end{align*}
Choosing $\epsilon$ depending on $k, l, n, s$ and noting that $A(\nabla R, \nabla \rho) \ge -C_{k,l,n,\gamma} \frac{1}{\rho} R^2 |\nabla \rho|^2$, we compute:
\begin{align*}
& \int_{\rho \le p}  k\rho R^{2l}I_1 + \rho I_2 -2l (n+3-2s) |A|^{2k} R^{2l-1} \nabla \rho \cdot \nabla R  \\
\ge & \int_{\rho \le p} 2kl\rho  |A|^{2k-2} R^{2l-2} \nabla |A|^2 \cdot \nabla R^2 - 2(n-s) (1-\delta_{0k}) |A|^{2k} \nabla R^{2l} \cdot \nabla \rho \\
&  + k(k-1) \rho R^{2l} |A|^{2k-4} |\nabla |A|^2|^2   +k\rho R^{2l}|A|^{2k-2} |\nabla A|^2  + l(2l-1) \rho |A|^{2k}  R^{2l-2} |\nabla R|^2  \\
&-k C_{k, l, n, s}  |A|^{2k-2} R^{2l}\big(\rho |\nabla R|^2 + \frac{1}{\rho} R^2 |\nabla \rho|^2 \big)  \\
&  +4lk(n-3)  R^{2l-1}|A|^{2k-2}  A_{mi} A_{ij} \nabla_j R \nabla_m \rho +2 k(k-1)(n-3)  R^{2l}|A|^{2k-4} \nabla_m \rho A_{mi} A_{ij} \nabla_j |A|^2 \\
&    +C_{k,l,n,\gamma} \rho A^{2k+2l}*Rm  \\
&+ \oint_{\rho = p } \frac{k}{n} \rho R^{2l} |A|^{2k-2} A(\nabla R, \nu) -2k(n-3)  R^{2l} |A|^{2k-2} \nabla_m \rho A_{mi} A_{ij} \nu_j \\
&+ (1-\delta_{0k})(n-s)R^{2l}|A|^{2k} |\nabla \rho|  + (2\gamma-3 ) R^{2l}|A|^{2k} |\nabla \rho|. 
\end{align*}
We observate that, by Young's inequality,
\begin{align*}
&2kl |A|^{2k-2} R^{2l-2} \nabla |A|^2 \cdot \nabla R^2 - 2(n-s)(1-\delta_{0k}) |A|^{2k} \nabla R^{2l} \cdot \nabla \rho  \\
\ge & - klC_{k,l,n,\gamma}  \big[ \epsilon_k   R^{2l-2} |A|^{2k}\big(\frac{1}{\rho} |A|^2 |\nabla \rho|^2 +\rho |\nabla A|^2 \big)+\frac{1} {\epsilon_k }\rho R^{2l} |A|^{2k-2} |\nabla R|^2  \big]. 
\end{align*}
For the last two terms in the interior integral, we require additional algebraic computations. First, we have:
\begin{equation*}
RA_{mi} A_{ij} \nabla_j R \nabla_m \rho =  A^2(R\nabla \rho, \nabla R) \ge  -\epsilon_k \frac{1}{\rho}  |A|^4 |\nabla \rho|^2  -\frac{1}{\epsilon_k }\rho R^2 |\nabla R|^2 . 
\end{equation*}
For the $\nabla_m \rho A_{mi} A_{ij} \nabla_j |A|^2$,  we refer to Lemma \ref{lemma:A^3}, which states:
\begin{align*}
\nabla_m \rho A_{mi} A_{ij} \nabla_j |A|^2 & \ge C_{n,\gamma}\big[ -\frac{1}{\rho}(1+\frac{1}{\epsilon}) R^2 |\nabla \rho|^2 |A|^2 - \rho R^2 |\nabla A|^2  -\epsilon \rho |\nabla A|^2 |A|^2 \\
&  - \frac{ \rho }{\epsilon } |\nabla R|^2 |A|^2 \big].
\end{align*}
Choosing $\epsilon$ sufficiently small, we obtain:
\begin{align*}
&  \oint_{\rho = p} \rho \partial_{\nu} (|A|^{2k} R^{2l})  \\
\ge & \int_{\rho \le p} k(k-1) \rho R^{2l} |A|^{2k-4} |\nabla |A|^2|^2 +k \rho R^{2l}|A|^{2k-2} |\nabla A|^2  + l(2l-1)\rho |A|^{2k}  R^{2l-2} |\nabla R|^2   \\
&-k C_{k, l, n, s}  \big[ |A|^{2k-2} R^{2l}\big(\rho |\nabla R|^2 + \frac{1}{\rho} R^2 |\nabla \rho|^2 \big)  +|A|^{2k-4} R^{2l+2} |\nabla A|^2 \big]\\
& -  klC_{k,l,n,\gamma}  \big[\epsilon_k |A|^{2k}  R^{2l-2} \big(\frac{1}{\rho} |A|^2 |\nabla \rho|^2 +\rho |\nabla A|^2 \big)+\frac{1} {\epsilon_k }\rho  |A|^{2k-2} R^{2l} |\nabla R|^2  \big] \\
&    +C_{k,l,n,\gamma} \rho A^{2k+2l}*Rm  \\
&+ \oint_{\rho = p } \frac{k}{n} \rho R^{2l} |A|^{2k-2} A(\nabla R, \nu) -2k(n-3)  R^{2l} |A|^{2k-2} \nabla_m \rho A_{mi} A_{ij} \nu_j \\
&+(2\gamma-2)|A|^{2k} R^{2l} |\nabla \rho| +2 (1-\delta_{0k})(n-s)|A|^{2k}R^{2l} |\nabla \rho| .
\end{align*}
The remainder of the proof follows similarly to Proposition \ref{prop:derivatives}.

\end{proof}


\section{proof of main theorem}

In this section, we prove the main theorems. The key idea is to analyze an ordinary differential inequality for the $L^{2N}$-norm of the curvature over the level sets, which was derived in Section 4. In short, this differential inequality implies that the integral of the curvature over each level set must grow at least polynomially, while our assumption on quadratic curvature decay imposes at most logarithmic growth. The contradiction between these two growth behaviors forces the curvature to vanish everywhere.

This ODE is defined on the interval $(0, \infty)$ and is subject to boundary conditions at $p =0$, where $p$ denotes the height of the level set.  The curvature vanishes at the boundary, and its more precise behavior as $p \rightarrow 0$ is estimated as a consequence of Hardy’s inequality (see Propositions \ref{prop:weightedgradient} and \ref{prop:weightedgradient2}).

Our method leads to the following Liouville-type theorem, which generalizes the Liouville theorem of \cite{L} for subharmonic functions to operators with a potential exhibiting quadratic decay. Notably, the assumption of quadratic decay is natural, as it is preserved under scaling.

\begin{proposition} \label{prop:liouville}
Let $n \ge 3$. Assume $u $ is smooth function on $\R^n$ satisfying the PDE 
$$\Delta_0 u = - fu.$$ Assume that
$$|f| \le \frac{C_1}{r^2}, \quad |u|\le \frac{C_2}{r^\epsilon}$$
 for some $\epsilon, C_1, C_2 >0$.  If $C_1$ is sufficiently small depending on $n$ and $\epsilon$, then $u \equiv 0$.
\end{proposition}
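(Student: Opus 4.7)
The plan is to transplant the monotonicity machinery of Sections 3--4 to the flat Euclidean setting, with the role of the adapted defining function $\rho$ played by the radial distance $r=|x|$ and with the scalar curvature replaced by the potential $f$. Fix a positive integer $N$ (to be chosen) and define the weighted energy
\[
F(p) \coloneqq \int_{|x|\le p} \frac{u^{2N}}{|x|}\, dx,
\]
the flat analog of the quantities $F_{k,l}(p)$ appearing in Proposition \ref{prop:highdimderivatives}. Starting from the identity
\[
\Delta_0 (u^{2N}) = 2N u^{2N-1}\Delta_0 u + 2N(2N-1) u^{2N-2}|\nabla u|^2 = -2Nf u^{2N} + 2N(2N-1) u^{2N-2}|\nabla u|^2,
\]
the co-area formula and the divergence theorem yield
\[
F'(p) = \frac{1}{p}\oint_{|x|=p} u^{2N}, \qquad pF''(p) = (n-2)F'(p) + \int_{B_p}\bigl(-2Nf u^{2N} + 2N(2N-1) u^{2N-2}|\nabla u|^2\bigr),
\]
in direct analogy with Proposition \ref{prop:highdimderivatives}.

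The key analytic ingredient is a Hardy-type inequality on the ball, obtained by applying the divergence theorem to the vector field $u^{2N} x/|x|^2$ (whose divergence is $(n-2) u^{2N}/|x|^2 + 2N u^{2N-1} \nabla u \cdot x/|x|^2$) and then invoking Cauchy--Schwarz. This produces a bound of the form
\[
\int_{B_p}\frac{u^{2N}}{|x|^2}\,dx \le C(n,N) \int_{B_p} u^{2N-2}|\nabla u|^2\, dx + C(n)\, F'(p).
\]
The essential feature is that the boundary contribution at $\partial B_p$ is exactly a multiple of $F'(p)$, which is precisely the quantity to be absorbed into the monotonicity. Inserting this into the formula for $pF''(p)$ and using the hypothesis $|f| \le C_1/|x|^2$, for $C_1$ sufficiently small (depending on $n$ and $\epsilon$) one finds
\[
pF''(p) \ge (n-2-\delta)\, F'(p),
\]
with $\delta = O(C_1)$ and $\delta < 2N\epsilon$. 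Equivalently, $F'(p)/p^{n-2-\delta}$ is non-decreasing in $p$.

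The conclusion then follows by a dichotomy. Either $F'(p_0) > 0$ for some $p_0 > 0$, in which case the monotonicity forces $F(p) \ge c\, p^{n-1-\delta}$ for large $p$; or $F' \equiv 0$, in which case $F \equiv 0$ (since $F(0) = 0$ and $F$ is non-decreasing), so $u \equiv 0$ on every sphere and hence on $\mathbb{R}^n$. In the first case, the polynomial decay assumption $|u|\le C_2 r^{-\epsilon}$ provides the upper bound
\[
F(p) \le C_2^{2N} \int_0^p s^{n-2-2N\epsilon}\,ds = O(p^{n-1-2N\epsilon}),
\]
which contradicts the lower bound because $\delta < 2N\epsilon$. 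Thus only the second alternative is possible, proving $u\equiv 0$. Taking $N=1$ already works, so $C_1$ depends only on $n$ and $\epsilon$.

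The main obstacle is setting up the Hardy inequality on $B_p$ so that its boundary term at $\partial B_p$ takes exactly the form $C(n)F'(p)$ that the monotonicity scheme can absorb; this is what allows the ODE argument to close without loss. A secondary point is the verification that all integrations by parts are valid in this non-compact setting, which follows routinely from the smoothness and polynomial decay of $u$.
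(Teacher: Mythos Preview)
Your argument is correct and is close in spirit to the paper's, but the execution differs. The paper works with the function $g(r)=\int_0^r s^{n-3}\fint_{\partial B_s}u^2\,ds$ and derives a genuine second-order Euler-type inequality $g''\ge \tfrac{n-3}{r}g'-\tfrac{C_1C_n}{r^2}g$; the potential term is kept on the right as a zeroth-order term and is handled through the indicial-root analysis $x^2-(n-2)x+C_1C_n=0$, which produces monotonicity of $g(r)/r^x$. You instead absorb the potential term at an earlier stage via the explicit Hardy identity for the vector field $u^{2N}x/|x|^2$, so that your differential inequality is first order in $F'$, namely $pF''\ge(n-2-\delta)F'$, with no $F$-term at all. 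This is a cleaner ODE (no indicial roots needed), and it mirrors exactly the structure of Sections~3--4 of the paper with $\rho$ replaced by $|x|$; the paper's own proof of this Proposition is more self-contained and independent of that machinery. Your choice $N=1$ is fine and gives the stated dependence of $C_1$ on $n$ and $\epsilon$; the only small correction is that the upper bound on $F(p)$ should be split as $F(p)\le C+\int_1^p C_2^{2N}\omega_{n-1}s^{n-2-2N\epsilon}\,ds$ (the bound $|u|\le C_2 r^{-\epsilon}$ is not useful near $r=0$, where one uses smoothness of $u$ instead), but this does not affect the contradiction.
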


\begin{proof}
Let $B_r$ be a geodesic ball of radius $r$centered at a point on the boundary. Using integration by parts, we obtain the following identity:

\begin{equation} \label{eq1}
0 \le \frac{1}{r^{n-1}} \int_{B_r} |Du|^2 = \frac{1}{r^{n-1}} \int_{\partial B_r} \partial_r u \cdot u + \frac{1}{r^{n-1}} \int_{B_r} fu^2. 
\end{equation}
Next, we estimate the second integral:
$$\int_{B_r} fu^2 \le C_n \int_0^r s^{n-1} \sup_{\partial B_s} |f|\fint_{\partial B_s} u^2 \le C_1C_n \int_0^r s^{n-3} \fint_{\partial B_s} u^2 $$
where $C_n$ is a dimensional constant. Define
$$g(r) =  \int_0^r s^{n-3} \fint_{\partial B_s} u^2.$$ 
Then the inequality (\ref{eq1}) is transformed into the second order ODE:
\begin{equation} \label{eq2}
\frac{(n-3)}{r} g'(r) \le g''(r) + \frac{C_1C_n}{r^2} g(r). 
\end{equation}
Let $x, y$ be the indicial roots of the characteristic equation associated with this differential inequality:
$$x^2 -(n-2)x + C_1C_n = 0.$$
The roots $x, y$ exist if the inequality $(n-2)^2 \ge4C_1 C_n$ holds. Both roots are positive and satisfy $x, y<n-2$.
Now, define $h(r) = \frac{g(r)}{r^x}$. 
Rewriting (\ref{eq2}) in terms of $h(r)$, we obtain:
 $$0 \le (\frac{h'(r)}{r^{y-x-1}})'.$$
Since  $g(r)\sim r^{n-2}$ for $r$ near 0, it follows that $\frac{h'(r)}{r^{y-x-1}}\sim r^{n-2-y}$.
 As $y<n-2$, we conclude that
$$ \lim_{r\rightarrow0} \frac{h'(r)}{r^{y-x-1}} = 0. $$
Thus, we have  $\frac{h'(r)}{r^{y-x-1}} \ge 0$,  which in turn implies $h'(r) \ge 0$, leading to the inequality:
\begin{equation}\label{gmonotone}
g(1) \le \frac{g(r)}{r^x}.
\end{equation}
Now using the assumption $|u| \le \frac{C_0}{r^\epsilon}$, we obtain an upper bound on $g$:
$$g(r) \le \int_0^r s^{n-3}(1+ C/s^{2\epsilon}) \le C(r^{n-2 - 2\epsilon} + 1).$$
On the other hand, if $C_1$ is sufficiently small, then $x>n-2-2\epsilon$. Taking the limit $r \rightarrow \infty$ on the inequality (\ref{gmonotone}), we conclude that $g \equiv 0$, and thus $u \equiv 0$ provided $C_1$ is sufficiently small.
\end{proof}

For the remainder of this section, we focus on deriving a monotonicity formula of the form (\ref{gmonotone}) for the integral of suitable powers of curvature quantities. This will be a key step in proving the main theorems.


\subsection{n+1=4 dimensional case}

We first begin with the following simple algebraic observation.

\begin{lemma}\label{lemma:algebraic}
Let $n=3$. For $\gamma>1$, the constant of $\rho^{-1} E^2(\nabla \rho, \nabla \rho)$ in the expression of $A(\nabla R, \nabla \rho)$ is positive. In particular, $ A(\nabla R, \nabla \rho) + c_\gamma \rho^{-1}R^2|\nabla \rho|^2 \ge 0$ for some $c_\gamma>0$.

\end{lemma}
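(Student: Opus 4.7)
The plan is to expand $A(\nabla R,\nabla\rho)$ into pointwise polynomial expressions in $E$ and $R$, identify the coefficient of $\rho^{-1}E^2(\nabla\rho,\nabla\rho)$, and absorb the remaining cross term via Young's inequality. The key input is a clean pointwise formula for $\nabla R$.

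First I would differentiate the scalar-curvature identity $R = 2n(s-\tfrac{n+1}{2})\rho^{-2}(1-|\nabla\rho|^2)$ from Lemma \ref{lemma:eqs}(2), combining it with $\thess\rho = -\tfrac{\rho}{n-1}E$ from Lemma \ref{lemma:eqs}(4) to handle the trace-free part and $\Delta\rho = -\tfrac{s\rho R}{2n(s-(n+1)/2)}$ from Lemma \ref{lemma:eqs}(1) to handle the trace part of $\nabla^2\rho\cdot\nabla\rho$. A short calculation then gives
\begin{equation*}
\nabla R \;=\; \frac{4n(s-\tfrac{n+1}{2})}{(n-1)\rho}\,E(\nabla\rho,\cdot) \;+\; \frac{2(s-n-1)R}{(n+1)\rho}\,\nabla\rho.
\end{equation*}

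Substituting this into $A(\nabla R,\nabla\rho) = \tfrac{1}{n-1}E(\nabla R,\nabla\rho) + \tfrac{R}{2n(n+1)}\langle\nabla R,\nabla\rho\rangle$ and specializing to $n=3$ with $s = \tfrac{3}{2}+\gamma$ yields, after bookkeeping,
\begin{equation*}
A(\nabla R,\nabla\rho) \;=\; \frac{3(s-2)}{\rho}E^2(\nabla\rho,\nabla\rho) + \frac{R(s-3)}{2\rho}E(\nabla\rho,\nabla\rho) + \frac{R^2(s-4)}{48\rho}|\nabla\rho|^2,
\end{equation*}
where $E^2(X,X) := E_{ik}g^{kl}E_{lj}X^iX^j = |E(X,\cdot)|^2 \ge 0$. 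Since $\gamma > 1$ forces $s > \tfrac{5}{2}$, the leading coefficient $3(s-2)$ is strictly positive, which proves the first assertion.

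For the inequality, Cauchy-Schwarz gives $|E(\nabla\rho,\nabla\rho)| \le |\nabla\rho|\sqrt{E^2(\nabla\rho,\nabla\rho)}$, and Young's inequality bounds the cross term by
\begin{equation*}
\left|\tfrac{R(s-3)}{2}E(\nabla\rho,\nabla\rho)\right| \;\le\; \tfrac{3(s-2)}{2}E^2(\nabla\rho,\nabla\rho) + \tfrac{(s-3)^2}{24(s-2)}R^2|\nabla\rho|^2.
\end{equation*}
Substituting into the explicit formula above yields $\rho A(\nabla R,\nabla\rho) \ge \tfrac{3(s-2)}{2}E^2(\nabla\rho,\nabla\rho) - C_\gamma R^2|\nabla\rho|^2$ for an explicit $C_\gamma$ depending only on $\gamma$, so any $c_\gamma > \max(C_\gamma,0)$ gives the claimed non-negativity. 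The only real obstacle is the algebraic bookkeeping of constants in the substitution step; no delicate analytic input is required, and the sharpness of the coefficient $3(s-2)>0$ is exactly what makes $\gamma > \tfrac{1}{2}$ (in particular $\gamma > 1$) the natural threshold.
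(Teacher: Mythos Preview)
Your proof is correct and follows essentially the same approach as the paper: compute $A(\nabla R,\nabla\rho)$ explicitly as a quadratic in $E(\nabla\rho,\cdot)$ and $R\nabla\rho$, read off the leading coefficient $3(s-2)$, and absorb the cross term by Young's inequality. The paper's proof is simply a terser version of yours, stating only the coefficient $3(s-2)$ and leaving the other two as unspecified constants $C_\gamma$; your explicit values for the remaining coefficients and the intermediate formula for $\nabla R$ are correct.
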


\begin{proof}

By direct computation, we obtain
\begin{align*}
A(\nabla R, \nabla \rho) = 3(s-2)\rho^{-1} E^2(\nabla \rho, \nabla \rho) +C_\gamma \rho^{-1} RE(\nabla \rho, \nabla \rho) +   C_{\gamma} \rho^{-1} R^2 |\nabla \rho|^2.
\end{align*}
The second assertion follows from Young's inequality.
\end{proof}

With this, we are now ready to prove the main theorem in the case  $n+1 = 4 $.
\begin{theorem} 
Let $n = 3$ and $\gamma > 1$. There exists a positive number $C_0(3, \gamma)$ with the following significance:

Suppose $(X^{4}, \mathbb{R}^3, g_+)$ is a complete, smooth  Poincar\'{e}-Einstein manifold with conformal infinity $(\mathbb{R}^3, g_{0})$. Assume there exists the adapted compactification with parameter $\gamma$, $\rho \in C^3(\overline{X})$, and let $g = \rho^2 g_+$ be the compactified metric.
Further, assume the following conditions hold:

(1) $R_g\ge 0$

(2) $|Rm_g|(x) \le \frac{C(3, \gamma)}{\mathrm{dist}(x,o)^2}$ for all $x\in \overline{X}$, where $o$ is a fixed point on the boundary.

Then,  $(\overline{X}, \mathbb{R}^3, \rho^2 g_+)$ is isometric to the standard Euclidean upper half-plane. Equivalently $(X^{4}, \mathbb{R}^3,  g_+)$ is isometric to the standard upper-half plane model of hyperbolic space.
\end{theorem}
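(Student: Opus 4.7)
The plan is to reduce the theorem to showing that $A\equiv 0$ and $R\equiv 0$ on $\overline{X}$. Once this vanishing is established, Lemma~\ref{lemma:eqs} forces $|\nabla\rho|\equiv 1$, $\thess\rho\equiv 0$, and $\Delta\rho\equiv 0$, hence $\nabla^2\rho\equiv 0$; by Hessian rigidity (equivalently Proposition~\ref{prop:globgeo}) $(\overline X,g)$ is isometric to the Euclidean upper half-space, proving the theorem. To obtain the vanishing, I apply a Liouville-type argument in the spirit of Proposition~\ref{prop:liouville} to the combined integral $H(p)=F(p)+KG(p)$ from Proposition~\ref{prop:derivatives} for a sufficiently large constant $K>0$.

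Adding the two identities $(p^{2}F'(p))'=\cdots$ and $K(p^{2}G'(p))'=\cdots$, the boundary pieces $\oint 2|A|^{2}|\nabla\rho|=2pF'(p)$ and $K(2\gamma-1)\oint R^{2}|\nabla\rho|=K(2\gamma-1)pG'(p)$ cancel the first-order terms in $(p^{2}F')'$ and $(p^{2}G')'$. Lemma~\ref{lemma:algebraic} (which uses exactly $\gamma>1$) converts $-\tfrac{1}{3}A(\nabla R,\nabla\rho)$ into a multiple of the integrand of $G$; the cubic curvature term $\rho\,A*A*Rm$ is dominated pointwise by $C_{0}\rho^{-1}|A|^{2}$ via $|Rm|\le C_{0}/d^{2}$ and $\rho\le d$, and hence by a constant multiple of $F$. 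The remaining boundary integrals $\oint\rho A(\nabla R,\nu)$ and $\oint\rho^{2}R|A|^{2}/|\nabla\rho|$ are handled by $\varepsilon$-Cauchy-Schwarz, with the resulting $1/\varepsilon$-weighted interior terms absorbed into the positive contributions $\int\rho|\nabla A|^{2}$ and $\int\rho|\nabla R|^{2}$ via the curved Hardy inequality (Proposition~\ref{prop:hardy}) and the weighted gradient estimates of Propositions~\ref{prop:L2gradientweighted1}, \ref{prop:weightedgradient} and Corollary~\ref{lemma:rhogradient}. The outcome is a second-order Euler-type inequality
\[
p^{2}H''(p)-b\,pH'(p)+C_{n,\gamma}\,C_{0}\,H(p)\ge 0,
\]
whose indicial polynomial $x^{2}-(b+1)x+C_{n,\gamma}C_{0}$ has two positive roots $x_{-}<x_{+}$, with $x_{-}\downarrow 0$ as $C_{0}\downarrow 0$.

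The monotonicity argument of Proposition~\ref{prop:liouville} then applies to $h(p)=H(p)/p^{x_{-}}$ provided both endpoint limits $h(0^{+})=h(\infty)=0$ hold. As $p\to 0$, Hardy's inequality with $f=|A|$ (resp.\ $f=R$) combined with the weighted gradient bounds gives $\oint_{\rho=p}|A|^{2}|\nabla\rho|=O(p^{\,2\gamma-2})$ and the analogous bound for $R^{2}$; hence $H(p)=O(p^{\,2\gamma-2})$, and for $C_{0}$ small, $x_{-}<2\gamma-2$, so $h(0^{+})=0$. As $p\to\infty$, Proposition~\ref{prop:curvaturebound} with $N=1$ (allowed since $2N=2>n/2$ in dimension $n=3$) together with its scalar-curvature analogue give $pF'(p)+pG'(p)=O(1)$; integrating, $H(p)=O(\log p)$, and since $x_{-}>0$, $h(\infty)=0$. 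The Frobenius-type integrating factor argument from the proof of Proposition~\ref{prop:liouville} then renders $h$ monotone non-decreasing on $(0,\infty)$, whence $h\equiv 0$, i.e.\ $H\equiv 0$. Since $|\nabla\rho|\ge\delta>0$ everywhere by Lemma~\ref{lemma:basic}, this yields $A\equiv 0$, and tracing $A$ gives $R\equiv 0$, completing the argument.

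The main technical obstacle is the boundary term $\oint\rho A(\nabla R,\nu)$, which contains a first derivative of curvature along the level set and is not controlled by elementary trace bounds; it must be absorbed by $\varepsilon$-Cauchy-Schwarz into the positive interior $\int\rho|\nabla A|^{2}$ and $\int\rho|\nabla R|^{2}$ terms, using the weighted Hardy estimate to close the bookkeeping. A secondary subtlety is that $C_{0}=C(3,\gamma)$ must simultaneously make $\delta$ close to $1$ in Lemma~\ref{lemma:basic}, validate the $L^{2N}$ bounds of Proposition~\ref{prop:curvaturebound}, and keep the indicial root $x_{-}$ both strictly positive and strictly smaller than $2\gamma-2$. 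Finally, in the range $1<\gamma<\tfrac{3}{2}$ the full $C^{\infty}$ regularity hypothesis is essential so that the boundary expansion of Lemma~\ref{lem:adpreg3} justifies the vanishing of boundary integrals used when deriving Proposition~\ref{prop:derivatives}.
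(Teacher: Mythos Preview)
Your overall architecture is exactly the paper's: combine $F$ and $G$, extract a second-order Euler-type differential inequality, and run the Liouville/monotonicity argument to force $A\equiv 0$. The endpoint analysis and the use of Proposition~\ref{prop:curvaturebound} and Hardy's inequality are also on target.

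The genuine gap is your treatment of the boundary term $\oint_{\rho=p}\tfrac{1}{3}\rho^{-1}A(\nabla R,\nu)$. You describe it as ``containing a first derivative of curvature'' and propose to absorb it by $\varepsilon$-Cauchy--Schwarz into the interior terms $\int\rho|\nabla A|^{2}$ and $\int\rho|\nabla R|^{2}$ via Hardy. This does not work: Cauchy--Schwarz would produce a boundary integral $\oint_{\rho=p}\rho|\nabla R|^{2}$, and there is no trace inequality bounding such a term by the corresponding interior integral; Proposition~\ref{prop:hardy} goes in the opposite direction. The point you are missing is that $\nu=\nabla\rho/|\nabla\rho|$, so $A(\nabla R,\nu)=|\nabla\rho|^{-1}A(\nabla R,\nabla\rho)$, and this is the \emph{same} expression you already handled in the interior via Lemma~\ref{lemma:algebraic}. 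In the adapted PE setting, $\nabla R$ is algebraic in $R$, $E$, $\nabla\rho$ and $\rho^{-1}$ (differentiate Lemma~\ref{lemma:eqs}(2) and use Lemma~\ref{lemma:eqs}(4)), so $A(\nabla R,\nabla\rho)$ is a linear combination of $\rho^{-1}E^{2}(\nabla\rho,\nabla\rho)$, $\rho^{-1}RE(\nabla\rho,\nabla\rho)$, and $\rho^{-1}R^{2}|\nabla\rho|^{2}$, with positive leading coefficient when $\gamma>1$. Hence the boundary term is bounded below by $-c_{\gamma}\,\rho^{-2}R^{2}|\nabla\rho|$ and is absorbed directly into $G'(p)$ by choosing the coefficient $K=c_{\epsilon}$ of $G$ large; no gradient terms or Hardy inequality are needed at this step. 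Once you make this correction, the ODE inequality follows cleanly (the paper splits into the cases $\gamma\ge\tfrac{3}{2}$ and $1<\gamma<\tfrac{3}{2}$, because the coefficient $(2\gamma-3)$ in front of $\oint R^{2}|\nabla\rho|$ changes sign), and the monotonicity argument concludes as you describe. A minor additional point: for the monotonicity to start, the paper uses a \emph{derivative} condition $\lim_{p\to 0}H'(p)/p^{\alpha-1}=0$ (not merely $H(p)/p^{x_{-}}\to 0$), which is what the Hardy-based bound $\oint_{\rho=p}|A|^{2}=O(p^{\min(1,2\gamma-2)})$ actually provides.
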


\begin{proof}

We may assume $C_0(3, \gamma)$ is sufficiently small so that the assumptions of Proposition \ref{prop:derivatives} are satisfied. In particular, $|\nabla \rho |$ has a positive lower bound and we have the curvature bound
$$|Rm|\le C_0(3,\gamma)/d^2 \le C_0(3,\gamma)/\rho^2.$$
Define the following functions:
$$
F (p) \coloneqq \int_{\rho \le p} \rho^{-1} |A|^2|\nabla\rho|^2 dV_g
$$
$$
G (p) \coloneqq \int_{\rho \le p} \rho^{-1} |R|^2|\nabla\rho|^2 dV_g.
$$
We now derive a differential inequality for  $F(p) + cG(p)$. Using the identities from Proposition \ref{prop:derivatives}, we obtain:
\begin{align*}
F''(p) =& \frac{1}{p^2} \int_{\rho \le p}  2\rho  |\nabla A|^2 -\frac{1}{18}\rho |\nabla R|^2 -\frac{1}{3}  A(\nabla R, \nabla \rho)   + \rho A*A*Rm  \\
&+ \oint_{\rho = p} \frac{1}{3} \rho^{-1}  A(\nabla R, \nu) -  C_{n,\gamma}  \frac {R|A|^2}{|\nabla \rho| }, 
\end{align*}
\begin{align*}
G''(p) = \frac{1}{p^2}\int_{\rho \le p}  2\rho  |\nabla R|^2   + \rho A*A*Rm  + \oint_{\rho = p}  \frac{1}{\rho^2}(2\gamma-3)R^2 |\nabla \rho| -  C_{n,\gamma} \frac {R^3}{|\nabla \rho| }. 
\end{align*}
Adding these equations, we derive:
\begin{align*}
(F+cG)''(p) =& \frac{1}{p^2} \int_{\rho \le p}  2\rho  |\nabla A|^2 +(2c-\frac{1}{18}) \rho |\nabla R|^2 -\frac{1}{3}  A(\nabla R, \nabla \rho)  \\
& + \rho(c+1) A*A*Rm  \\
&+ \oint_{\rho = l}\frac{1}{\rho^2}  (2\gamma-3)cR^2 |\nabla \rho| + \frac{1}{3} \rho^{-1}  A(\nabla R, \nu) \\
&-  C_{n,\gamma} (c+1) \frac {R|A|^2}{|\nabla \rho| }. 
\end{align*}

Now, by Lemmas \ref{lemma:algebraicineq} and \ref{lemma:algebraic}, noting that $\rho A(\nabla R, \nabla \rho)$ is a linear combination of $E^2(\nabla \rho,\nabla \rho)$, $RE(\nabla \rho, \nabla \rho)$, and $R^2|\nabla \rho|^2$, we conclude that for every $\epsilon>0$, there exists sufficiently large $c_\epsilon$ and a constant $c_\gamma$(not depending on $\epsilon$) such that
\begin{align*}
(F+c_\epsilon G)''(p) \ge& \frac{1}{p^2} \int_{\rho \le p} - \epsilon c_\epsilon \rho^{-1} R^2 |\nabla \rho|^2 - C_0(3,\gamma) (c_\epsilon+1) C_{n,\gamma} \rho^{-1} |A|^2  \\
&+ \frac{1}{p^2} \oint_{\rho = p}  (2\gamma-3)c_\epsilon R^2 |\nabla \rho| -c_\gamma R^2 |\nabla \rho| - C_0(3,\gamma)  (c_\epsilon+1)C_{n,\gamma} |A|^2 |\nabla \rho|.
\end{align*}
If $C_0(3,\gamma)$ is sufficiently small depending on $\epsilon$, then we obtain the following inequality:
\begin{align*}
(F+c_\epsilon G)''(p) \ge&  -\epsilon \frac{1}{p^2}  (F+c_\epsilon G)(p) -\epsilon\frac{1}{p} (F+c_\epsilon G)'(p) \\
& + \frac{1}{p^2} \oint_{\rho = p}  [(2\gamma-3+\epsilon)c_\epsilon -c_\gamma ]  R^2 |\nabla \rho|.
\end{align*}

\textbf{Case 1:}  $\gamma \ge \frac{3}{2}$.

In this case, we may choose $c_\epsilon$ sufficiently large so that $(2\gamma-3)c_\epsilon - c_\gamma \ge 0$. This leads to the following ordinary differential inequality:
\begin{equation}\label{ineq:ode}
(F+c_\epsilon G)''(p) \ge -\epsilon \frac{1}{p^2}  (F+c_\epsilon G)(p) -\epsilon\frac{1}{p} (F+c_\epsilon G)'(p). 
\end{equation} 
Define $H_\epsilon = F + c_\epsilon G$. The associated indicial equation for the above inequality is:
$$x^2 -(1-\epsilon)x + \epsilon = 0. $$
If $\epsilon$ is chosen sufficiently small, there exists  two indicial roots $0<\alpha <\beta<1$ such that $\beta$ is sufficiently close to 1 and $\alpha$ is sufficiently close to 0.
Rewriting the differential inequality, we obtain:
$$0 \le \big( \frac{ (H(p)/p^\beta)'}{p^{\alpha - \beta -1 }} \big)'.$$

We first prove prove that $\lim_{p \rightarrow 0} \frac{ (H(p)/p^\beta)'}{p^{\alpha - \beta -1 }} = 0$.
This reduces to showing that $\lim_{p \rightarrow 0} \frac{H'(p)}{p^{\alpha-1}} = 0$, which is equivalent to
$$\lim_{p \rightarrow 0} \frac{\oint_{\rho = p } |A|^2 + c_\epsilon R^2} {p^\alpha }=0.$$ 
This follows from the $L^2$-gradient estimate and Hardy's inequality (Proposition \ref{prop:weightedgradient2}), which states that $\oint_{\rho = p} ( |A|^2 + c_\epsilon R^2)/p$ is bounded near $p=0$.
Thus, we conclude that $H(p)/p^\beta$ is non-decreasing in $p$.
Now, by Proposition \ref{prop:curvaturebound}, we obtain: 
$$ H(p) = \int_0^p\frac{1}{q}  \oint_{\rho = q}(|A|^2 + c_\epsilon R^2)|\nabla \rho|\le C + \int_1^p C/ p = C + C \log p .$$
Taking the limit as $p \rightarrow \infty$, we prove that $A$ and $R$ vanish identically on $X$.

\textbf{Case 2:}  $1< \gamma <\frac{3}{2}$.

In this case, the differential inequality takes the form:
$$(F+c_\epsilon G)''(p) \ge -\epsilon \frac{1}{p^2}  (F+c_\epsilon G)(p)  + \frac{1}{p}(2\gamma-3 -\epsilon)  (F+c_\epsilon G)'(p). $$
The corresponding indicial equation is:
$$ x^2 - (2\gamma -2 - \epsilon)x + \epsilon = 0.$$
If $\epsilon$ is chosen sufficiently small depending on $\gamma$, there exists  two indicial roots $0<\alpha <\beta<2\gamma -2 $ such that $\beta$ is sufficiently close to $2\gamma-2$ and $\alpha$ is sufficiently close to 0.

By Proposition \ref{prop:weightedgradient},  $H_\epsilon(p)/p^{2\gamma-2}$ is bounded near $p=0$. Using the same reasoning as in the case $\gamma \ge \frac{3}{2}$, we conclude 
$$\lim_{p \rightarrow 0} \frac{H'(p)}{p^{\alpha-1}} = 0.$$
This identity again ensures the monotonicity $H(p)/p^\beta$. Since $H(p)$grows at most logarithmically, the same argument as above proves that  $R$ and $A$ vanish identically.

Since $A \equiv 0$ in both cases, the main theorem follows from the Hessian rigidity theorem or from Proposition \ref{prop:globgeo}, given that $|\nabla \rho | \equiv 1$ globally.
\end{proof}

\subsection{$n+1 \ge 5$ dimensional case}

In this section, we prove the main theorem for general $n \ge 4$.

\begin{lemma} \label{lemma:algebraic2}
 The coefficient of $E^2(\nabla \rho, \nabla \rho)$ in the expression $$\frac{1}{n} \rho A(\nabla R, \nabla \rho) -2(n-3)  \nabla_m \rho A_{mi} A_{ij} \nabla_j \rho$$ is positive if $\gamma > \tfrac{n}{2}-1$.  In particular, we have
$$ \frac{1}{n} \rho A(\nabla R, \nabla \rho) -2(n-3)  \nabla_m \rho A_{mi} A_{ij} \nabla_j \rho + C_{n,\gamma} R^2|\nabla \rho|^2 \ge 0$$ for some $C_{n,\gamma}>0$.
\end{lemma}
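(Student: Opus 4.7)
The strategy is purely algebraic: expand everything in the three ``invariants'' $E^2(\nabla\rho,\nabla\rho):=E_{ij}E_{ik}\nabla_j\rho\nabla_k\rho$, $RE(\nabla\rho,\nabla\rho)$, and $R^2|\nabla\rho|^2$ that already appear in Lemma \ref{lemma:algebraicineq}, using the decomposition $A=\frac{1}{n-1}E+\frac{1}{2n(n+1)}Rg$. The essential preliminary is to derive a formula for $\rho\nabla R$ in the same basis. Starting from the identity $R=\mu\rho^{-2}(1-|\nabla\rho|^2)$ with $\mu=2n(s-\tfrac{n+1}{2})=n(2\gamma-1)$ (Lemma \ref{lemma:eqs}(2)), differentiating, and substituting the Hessian decomposition $\nabla_i\nabla_k\rho=\thess_{ik}\rho+\frac{\Delta\rho}{n+1}g_{ik}$ together with $\thess\rho=-\tfrac{\rho}{n-1}E$ and $\Delta\rho=-s\rho^{-1}(1-|\nabla\rho|^2)$ from Lemma \ref{lemma:eqs}, one obtains
$$\rho\nabla_i R=\frac{2\mu}{n-1}E_{ik}\nabla_k\rho+\frac{2\gamma-n-2}{n+1}R\nabla_i\rho.$$

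Plugging this into $\frac{1}{n}\rho A(\nabla R,\nabla\rho)=\frac{1}{n}A_{ij}\nabla_j\rho\cdot\rho\nabla_i R$ and likewise expanding $\nabla_m\rho A_{mi}A_{ij}\nabla_j\rho=|A\nabla\rho|^2$ using the decomposition of $A$, both terms become explicit quadratic forms in the three invariants. Collecting coefficients, the contribution to $E^2(\nabla\rho,\nabla\rho)$ from the first term is $\frac{2\mu}{n(n-1)^2}=\frac{2(2\gamma-1)}{(n-1)^2}$ while the contribution from the second term is $-\frac{2(n-3)}{(n-1)^2}$, giving total coefficient
$$\frac{2(2\gamma-1)-2(n-3)}{(n-1)^2}=\frac{4\bigl(\gamma-\tfrac{n}{2}+1\bigr)}{(n-1)^2},$$
which is strictly positive exactly when $\gamma>\tfrac{n}{2}-1$. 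This proves the first assertion.

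For the second assertion, write the full expression as $Q=aE^2(\nabla\rho,\nabla\rho)+bRE(\nabla\rho,\nabla\rho)+cR^2|\nabla\rho|^2$ with $a>0$; the coefficients $b$ and $c$ can be computed in the same way but their precise values do not matter. Cauchy--Schwarz applied to the vector $E_{ij}\nabla_j\rho$ gives
$$\bigl(E(\nabla\rho,\nabla\rho)\bigr)^2=\Bigl(\sum_j(E_{ij}\nabla_i\rho)\nabla_j\rho\Bigr)^2\le E^2(\nabla\rho,\nabla\rho)\,|\nabla\rho|^2,$$
so that $|RE(\nabla\rho,\nabla\rho)|\le|R|\,|\nabla\rho|\sqrt{E^2(\nabla\rho,\nabla\rho)}$. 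Young's inequality then yields
$$|bRE(\nabla\rho,\nabla\rho)|\le\frac{a}{2}E^2(\nabla\rho,\nabla\rho)+\frac{b^2}{2a}R^2|\nabla\rho|^2,$$
hence $Q\ge(c-\tfrac{b^2}{2a})R^2|\nabla\rho|^2$. Choosing $C_{n,\gamma}:=\max(1,\tfrac{b^2}{2a}-c)$ makes $Q+C_{n,\gamma}R^2|\nabla\rho|^2\ge 0$, which is the claim.

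The only real obstacle is the bookkeeping in the coefficient computation; once the expression for $\rho\nabla R$ is in hand, everything reduces to an elementary sign check, and the Young's inequality step is forced by the fact that $a>0$. In particular, there is no geometric input beyond the Einstein-plus-adapted structure already encoded in Lemma \ref{lemma:eqs}.
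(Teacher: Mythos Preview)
Your proof is correct and follows essentially the same route as the paper: expand everything in the invariants $E^2(\nabla\rho,\nabla\rho)$, $RE(\nabla\rho,\nabla\rho)$, $R^2|\nabla\rho|^2$ using the decomposition $A=\frac{1}{n-1}E+\frac{R}{2n(n+1)}g$ and the formula for $\rho\nabla R$, then handle the cross term by Young's inequality. Your computation is in fact more careful than the paper's sketch---the paper records the coefficient as $4(\gamma-\tfrac12)-2(n-3)$, which is your numerator without the $(n-1)^2$ denominator, but since only the sign matters the conclusion is unaffected; for the second assertion the paper simply invokes Lemma~\ref{lemma:algebraicineq}, which is the same Young's inequality step you wrote out explicitly.
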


\begin{proof}
By direct computation, the coefficient of $E^2(\nabla \rho, \nabla \rho)$ is 
$$4(\gamma-\tfrac{1}{2}) -2(n-3) = 2(2\gamma -n +2) >0 $$
for $\gamma > \tfrac{n}{2}-1$ ensuring that Lemma \ref{lemma:algebraicineq} is applicable.
\end{proof}

\begin{theorem} 
Let $n \ge 4$ and $\gamma > \frac{n}{2}-1$. There exists a positive number $C_0(n, \gamma)$ with the following significance:

Suppose $(X^{n+1}, \mathbb{R}^n, g_+)$ is a complete, smooth  Poincar\'{e}-Einstein manifold with conformal infinity $(\oR, g_{0})$. Assume there exists the adapted compactification with parameter $\gamma$, $\rho \in C^3(\overline{X})$, and let $g = \rho^2 g_+$ be the compactified metric.
Further, assume the following conditions hold:

(1) $R_g\ge 0$

(2) $|Rm_g|(x) \le \frac{C(n, \gamma)}{\mathrm{dist}(x,o)^2}$ for all $x\in \overline{X}$, where $o$ is a fixed point on the boundary.

Then,  $(\overline{X}, \mathbb{R}^n, \rho^2 g_+)$ is isometric to the standard Euclidean upper half-plane. Equivalently $(X^{n+1}, \mathbb{R}^n,  g_+)$ is isometric to the standard upper-half plane model of hyperbolic space.

\end{theorem}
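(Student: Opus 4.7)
The plan is to extend the strategy of the $n+1 = 4$ proof to higher dimensions, using the higher-moment integrals $F_{k,l}(p)$ of Proposition \ref{prop:highdimderivatives} in place of $F$ and $G$. Fix an integer $N > n/4$, so that Proposition \ref{prop:curvaturebound} gives $\oint_{\rho = p} |\nabla \rho||A|^{2N}$ uniformly bounded in $p$, and consider a positive linear combination
$$ H(p) = \sum_{k + l = N} c_{k,l}\, F_{k,l}(p) $$
with weights $c_{k,l} > 0$ to be chosen. By Proposition \ref{prop:highdimderivatives}, $H''(p)$ is the sum of an interior integral dominated by $|\nabla A|^2$ and $|\nabla R|^2$, up to error terms controlled via the assumption $|Rm| \le C_0/\rho^2$, and a boundary integral on $\{\rho = p\}$ containing the sign-definite terms $(2\gamma - 3)|A|^{2k}R^{2l}|\nabla\rho|$ and (for $k \ge 1$) $2(n - s)|A|^{2k}R^{2l}|\nabla\rho|$, together with the two troublesome contributions $\tfrac{k}{n}\rho|A|^{2k - 2}R^{2l}A(\nabla R, \nu)$ and $-2k(n - 3)|A|^{2k - 2}R^{2l}\nabla_m\rho\, A_{mi}A_{ij}\nu_j$.

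The central algebraic input is Lemma \ref{lemma:algebraic2}: when $\gamma > n/2 - 1$, the sum of the two troublesome boundary terms is non-negative modulo a controlled multiple of $|A|^{2k - 2}R^{2l + 2}|\nabla\rho|$. I would absorb this residual $R^{2}$--surplus inductively, starting from $(k, l) = (N, 0)$ and cascading downward, using the positive contribution $\bigl[(2\gamma - 3) + 2(n - s)\bigr]|A|^{2(k - 1)}R^{2l + 2}|\nabla\rho|$ generated in the boundary integral of $F_{k - 1, l + 1}''$ with weight $c_{k-1, l+1}$ chosen sufficiently large. The cascade terminates harmlessly at $(0, N)$, where the factor $k$ kills the troublesome terms outright. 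The interior error terms are handled via Cauchy--Schwarz together with Lemma \ref{lemma:algebraicineq}, producing at most an $\epsilon p^{-2} H(p) + \epsilon p^{-1} H'(p)$ loss provided $C_0$ is sufficiently small.

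The resulting differential inequality is then analyzed exactly as in the $n = 3$ proof: one obtains $H''(p) \ge -\epsilon p^{-2} H(p) - \epsilon p^{-1} H'(p)$ when $\gamma \ge 3/2$ (automatic for $n \ge 5$), or $H''(p) \ge -\epsilon p^{-2} H(p) + (2\gamma - 3 - \epsilon) p^{-1} H'(p)$ when $1 < \gamma < 3/2$ (possible only for $n = 4$). In either case the associated indicial equation has two positive roots $0 < \alpha < \beta$ with $\alpha$ arbitrarily small when $\epsilon$ is small. The Hardy-type boundary estimates of Propositions \ref{prop:weightedgradient} and \ref{prop:weightedgradient2}, applied with exponent $2N + 2$, give $\lim_{p \to 0} H'(p)/p^{\alpha - 1} = 0$, whence $H(p)/p^\beta$ is non-decreasing in $p$. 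Since Proposition \ref{prop:curvaturebound} forces $H(p) = O(1 + \log p)$ as $p \to \infty$, we must have $H \equiv 0$, hence $A \equiv 0$; then $\thess \rho \equiv 0$ and $|\nabla \rho| \equiv 1$ globally, and Proposition \ref{prop:globgeo} identifies $(X, g_+)$ with the upper half-space model of hyperbolic space.

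The main obstacle I anticipate is the combinatorial bookkeeping in the second step: the weights $c_{k,l}$ must be chosen inductively so that, simultaneously for every index pair $(k, l)$, one positive boundary contribution dominates every cascading residual coming out of Lemma \ref{lemma:algebraic2}, while the interior coupling terms still fit inside the $\epsilon$-losses that the indicial analysis can tolerate. This is exactly where the condition $\gamma > n/2 - 1$ enters, and where the additional positive term $2(n - s)|A|^{2k}R^{2l}|\nabla\rho|$ present only for $k \ge 1$ becomes indispensable: without it, the cascade would stall at the scalar-only moment $F_{0,N}$ in regimes where $\gamma$ is close to the borderline $n/2 - 1$.
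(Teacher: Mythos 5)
Your proposal follows the same strategy as the paper's proof in all essential respects: you use the higher-moment integrals $F_{k,l}$ from Proposition \ref{prop:highdimderivatives}, you invoke Lemma \ref{lemma:algebraic2} to control the two troublesome boundary terms under $\gamma > n/2 - 1$, you choose weights inductively starting from the pure-$A$ moment and cascading toward the pure-$R$ moment, and you close with the same indicial-root ODE analysis plus the Hardy boundary estimates. The one place where your description is slightly off is the absorption mechanism for the $R^2$-surplus: the paper does not dominate the residual of Lemma \ref{lemma:algebraic2} by the $\min(n-3,2\gamma-3)$ boundary term of $F_{k-1,l+1}''$ (which would fail at the last cascade step $(1,N-1)\to(0,N)$ when $n=4$ and $1<\gamma<3/2$, since the coefficient there is $2\gamma-3<0$); instead, by choosing $a_{k-1}$ large relative to $a_k$, the residual is made into a small $\epsilon$-multiple of $a_{k-1}R^{2l+2}|A|^{2k-2}|\nabla\rho|$, which is then folded into the tolerable $\epsilon p^{-2}H + \epsilon p^{-1}H'$ loss, leaving the $\min(n-3,2\gamma-3)$ coefficient intact for the ODE. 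Since your final differential inequality correctly distinguishes the two sign regimes, this imprecision does not invalidate the argument; it is the same proof.
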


\begin{proof}
In Proposition \ref{prop:highdimderivatives}, we defined the quantity
$$F_{k,l} (p) = \int_{\rho \le p } \rho^{-1} |A|^{2k} R^{2l}  |\nabla \rho|^2 dV_g, $$
and computed its derivatives.

Let $2N$ be a even integer greater or equal to $\frac{n}{2}$. Our strategy is to derive a differential inequality for a linear combination of $F_{k, l}$. Specifically, We seek a function $G = \sum_{l = 0}^m a_l F_{N-l, l }$ such that a second order ordinary differential inequality, analogous to (\ref{ineq:ode}) holds. The positive constants $a_i$ will be chosen inductively, and in the process, the parameters $\epsilon_k$ in the formula for $F_{k,l}''$ will also be determined inductively.

Throughout this subsection, the indices $k, l$ satisfy $k+l = N$, and we suppress the index $l$, using $F_k$ to denote $F_{k, N-k}$.  The following identities are derived from Proposition \ref{prop:highdimderivatives}:
\begin{align*}
F_{k,l}'(p) = & \oint_{\rho = p } \rho^{-1} |A|^{2k} R^{2l}  |\nabla \rho| dV_g , \\
F_{k}''(p) \ge & \frac{1}{p^2} \int_{\rho \le p} f_{k,0} + f_{k,1} + f_{k,2}+C_{k,l,n,\gamma} \rho A^{2k+2l}*Rm \\
&+ \frac{1}{p^2} \oint_{\rho = p } g_k +(2\gamma-3)(|A|^{2k} R^{2l}) |\nabla \rho| + 2(1-\delta_{0k})(n-s)R^{2l}|A|^{2k} |\nabla \rho| \\
&  -C_{k,l,n,\gamma}\frac{\rho^2}{|\nabla \rho|} A^{2k+2l}*Rm. 
\end{align*}
where we grouped the terms as follows:
\begin{align*}
f_{k,0} =&  k(k-1) \rho R^{2l} |A|^{2k-4} |\nabla |A|^2|^2 +k \rho R^{2l}|A|^{2k-2} |\nabla A|^2  + l(2l-1)\rho |A|^{2k}  R^{2l-2} |\nabla R|^2,   \\
f_{k, 1}  =& -k C_{k, l, n, \gamma}  \big[ |A|^{2k-2} R^{2l}\big(\rho |\nabla R|^2 + \frac{1}{\rho} R^2 |\nabla \rho|^2 \big)  +|A|^{2k-4} R^{2l+2} |\nabla A|^2 \big]\\
& - \frac{klC_{k,l,n,\gamma}} {\epsilon_k }\rho R^{2l} |A|^{2k-2} |\nabla R|^2,  \\
f_{k,2} =& -  klC_{k,l,n,\gamma} \epsilon_k   R^{2l-2} |A|^{2k}\big(\frac{1}{\rho} |A|^2 |\nabla \rho|^2  +\rho |\nabla A|^2 \big),  \\
g_k =&  \frac{k}{n} \rho R^{2l} |A|^{2k-2} A(\nabla R, \nu) -2k(n-3)  R^{2l} |A|^{2k-2} \nabla_m \rho A_{mi} A_{ij} \nu_j. 
\end{align*}
We now explain how the terms are grouped.

Observe that all terms in $f_{k,1}$, except for $\frac{1}{\rho}|A|^{2k-2} R^{2l+2} |\nabla \rho|^2$,  also appear in the expression for $f_{k-1, 0}$ and vanish when $k = 0$. Consequently, these terms can be handled by ensuring that $a_{k-1}$ is sufficiently large relative to $a_k$.  The remaining term, $\frac{1}{\rho}|A|^{2k-2} R^{2l+2} |\nabla \rho|^2$, can be absorbed into a  very small multiple of $F_{k-1}$ in such cases. 

  For the $f_{k,2}$ term, if $\epsilon_k$ is choosen sufficiently small, it can be absorbed into $f_{k+1, 0}$ and a  very small multiple of $F_{k+1}$. This observation provides insight into how the constants $a_k$ and $\epsilon_k$ should be chosen inductively.

\begin{claim}
For every $\epsilon>0$, there exists constants $a_N, a_{N-1}, \cdots, a_0$ such that for $F(p) = \sum_{k=N}^0 a_k F_k ( p ) $, the
following inequality holds for $C_0(n,\gamma)	$ sufficiently small.
\begin{align*}
F''(p) \ge & -\frac{\epsilon}{p^2} F(p) + \frac{1}{p} \big[ \min(n-3, 2\gamma-3) -\epsilon \big] F'(p) \\
& - \frac{1}{p^2} C(n,\gamma, a_N, \cdots , a_0 ) \big[ \int_{\rho \le p } \rho|Rm||A|^{2N} 
+  \oint_{\rho = p } \frac{\rho^2}{|\nabla \rho|}|Rm||A|^{2N} \big]. 
\end{align*}
\end{claim}

\begin{proof}
We choose the constants  $a_N, \epsilon_{N}$ inductively, starting from $a_N, \epsilon_N$ down to $a_0, \epsilon_0$.
 We begin by setting $a_N = \epsilon_N = 1$.

For $1 \le k\le N$, $a_{k-1}$ and $\epsilon_{k-1}$ are chosen to satisfy the following inequalities:
\begin{equation}\label{ineq:inductive}
\begin{cases}
a_k f_{k,1} + \frac{1}{3} a_{k-1}  f_{k-1,0}& \ge -\frac{\epsilon}{6\rho } a_{k-1} R^{2l+2 } |A|^{2k-2} |\nabla \rho|^2 \\
a_k g_k & \ge -\frac{\epsilon}{6}  a_{k-1} R^{2l+2 } |A|^{2k-2} |\nabla \rho| \\
a_{k-1} f_{k-1,2} + \frac{1}{3} a_{k}  f_{k,0} &\ge  -\frac{\epsilon}{6\rho} a_{k} R^{2l } |A|^{2k} |\nabla \rho|^2
\end{cases}
\end{equation}
The first and the second inequalities of the inequality (\ref{ineq:inductive}) is satisfied if $a_{k-1}$ 
  is chosen sufficiently large (for the second inequality, we also use Lemma \ref{lemma:algebraic2}).  Once $a_{k-1}$ is determined, the third inequality  satisfied by choosing  $\epsilon_{k-1}$ sufficiently small. Furthermore, for $k = 1$, $g_0$, $f_{0,2}$ and $f_{0,1}$ ensuring that the induction proceeds smoothly without disruption.

Summing up the inequalities in (\ref{ineq:inductive}) for $k = 0, \cdots ,N$, and noting that
$$\big[(2\gamma-3) + 2(1-\delta_{0k})(n-s)\big] R^{2l}|A|^{2k} |\nabla \rho|\ge \min(n-3, 2\gamma-3)  R^{2l}|A|^{2k} |\nabla \rho|, $$
we obtain the inequality:
\begin{align*}
F''(p) \ge & -\frac{\epsilon}{2p^2} F(p) + \frac{1}{p} \big[ \min(n-3, 2\gamma-3) -\frac{\epsilon}{2} \big] F'(p) \\
&  - \frac{1}{p^2} C(n,\gamma, a_N, \cdots , a_0 ) \big[ \int_{\rho \le p } \rho|Rm||A|^{2N} 
+  \oint_{\rho = p } \frac{\rho^2}{|\nabla \rho|}|Rm||A|^{2N} \big]. 
\end{align*} 
\end{proof}

By choosing $C_0(n,\gamma)$ sufficiently small (depending on the constants $a_k$'s) , we obtain the simplified inequality:
$$F''(p) \ge  -\frac{\epsilon}{p^2} F(p) + \frac{1}{p} \big[ \min(n-3, 2\gamma-3) -\epsilon \big] F'(p).$$
Now, following the ODE argument from the proof of the	 $n+1=4$ case, we get the result.

\end{proof}

\begin{remark}
The range of $\gamma$ can be improved if we assume faster decay of the curvature. In this case, the integer $N$ can be chosen smaller, and while handling the $g_k$ terms, we could achieve a slight improvement by explicitly computing the constant in front of $E^2(\nabla \rho, \nabla \rho)$ in the expression
$$g_k + \min(n-3, 2\gamma-3)  R^{2l}|A|^{2k} |\nabla \rho|$$
and then deriving a similar differential inequality.

\end{remark}


\section{Some examples of conformally Einstein manifold with quadratic curvature decay}

In this section, we provide several explicit examples of non-compact manifold with boundary, which is conformal to some Poincar\'{e}-Einstein manifold in the interior,  and has quadratic curvature decay at infinity. The are coming from stereographic projection of smooth,  non-smooth or singular conformally compact Einstein manifold in the classical sense.

\subsection{Hyperbolic space}
Let us consider the hyperbolic $\mathbb{H}^{n+1}$ in the ball model first:
$$
g_{\mathbb{H}}=\frac{4dz^2}{(1-|z|^2)^2}, \quad z=(z', z_{n+1})\in\mathbb{R}^{n+1},\  |z|<1, 
$$
which takes the round sphere $(\mathbb{S}^n,[g_{\mathbb{S}}])$ as its conformal infinity. Fix  $g_{\mathbb{S}}$ on the boundary and the geodesic normal defining function is  
$$x=\frac{1-|z|}{1+|z|}. $$
For any $f\in C^{\infty}(\mathbb{S}^n)$, there exists a unique solution satisfying
\begin{equation}\label{eq.6.1}
-\Delta_{\mathbb{H}} u-s(n-s)u=0, 	\quad \textrm{in $\mathbb{H}^{n+1}$}
\end{equation}
and  $  u\sim x^{n-s}f $ as $  x\rightarrow 0. $
Moreover $u=P_{\mathbb{H}}(s)f$ where $P_{\mathbb{H}}(s)$ is the Poisson operator. 
Let $e_{n+1}=(0, \cdots, 0, 1)$ be the north pole. 
Then the Poisson kernel  at $e_{n+1}$ is
$$P'_{\mathbb{H}}(s)=C_{n,s}\left(\frac{1-|z|^2}{|z-e_{n+1}|^2}\right)^{s} $$
which is an analytic family of distributions and $C_{n,s}$ is the normalization constant. 
In the upper half plane model of $\mathbb{H}^{n+1}$:
$$
g_{\mathbb{H}}=\frac{d\tilde{x}^2+d\tilde{y}^2}{\tilde{x}^2}, 
\quad \tilde{x}>0,\  \tilde{y}\in\mathbb{R}^n, 
$$
the Poisson kernel at $0$ is
$$
 P''_{\mathbb{H}}(s)=C_{n,s}\frac{\tilde{x}^{s}}{(\tilde{x}^2+|\tilde{y}|^2)^{s}}. 
$$
The Stereographic projection provides an isometry between these two models:
$$
\tilde{x}=\frac{1-|z|^2}{|z-e_{n+1}|^2}, 
\quad
\tilde{y}=\frac{2z'}{|z-e_{n+1}|^2}. 
$$
Notice that
$$
\tilde{x}^2+\tilde{y}^2=|z-e_{n+1}|^2|z+e_{n+1}|^2. 
$$
Using $\rho=C_{n,s}^{-\frac{1}{n-s}}P'_{\mathbb{H}}(n-s)^{\frac{1}{n-s}}$ as conformal factor, we obtain 
$$
g=\rho^2g_{\mathbb{H}}=d\tilde{x}^2+d\tilde{y}^2. 
$$
Notice that $P'_{\mathbb{H}}(n-s)^{\frac{1}{n-s}}$ satisfies the equation 
(\ref{eq.6.1}) with  boundary data equal to the Green' function of the scattering operator $S(s)$ at $e_{n+1}$. Hence it is also an adapted defining function for the boundary $\mathbb{S}^n\backslash\{e_{n+1}\}$ w.r.t. metric $g_{\mathbb{H}}$.

\subsection{Adapted stereographic projection from smooth CCE }
Suppose $(X^{n+1}, g_+)$ is a Poincar\'{e}-Einstein metric with conformal infinity $(M , [\hat{g}])$ of positive Yamabe type. 
Graham-Lee proved in \cite{GL} there are many such examples: given any  $\hat{g}$ which is a small perturbation of the round metric $g_{\mathbb{S}}$ on the sphere, then there exists a Poincar\'{e}-Einstein metric $g_+$ in the ball  taking $(\mathbb{S}^n, [\hat{g}])$ as conformal infinity. 
We consider the adapted stereographic projection for  $(X^{n+1}, g_+)$ as above. The key point is to write the Poisson kernel in local coordinates. 

Fix any $p\in  M$, then we can choose a local coordinates $(x,y)\in \mathbb{R}_+\times \mathbb{R}^n$ such that $x$ is the geodesic normal defining function w.r.t. $(g_+, \hat{g})$, and $y$ is the boundary  normal coordinates w.r.t. $\hat{g}$. This implies
\begin{equation*}
\label{metric.1}
g_+=x^{-2}\left(dx^2 + \hat{g}+x^2g_2+\cdots \right), \quad \hat{g}= dy^2 +O(y^2). 
\end{equation*}
So we can write $g_+=x^{-2}\left(dx^2 + \hat{g}+O(r^2)(dx,dy)\right )$
where $O(r^2)(dx,dy)$ denotes the symmetric two tensors in basis $\{dx,dy\}$ with coefficients $O(r^2)$. The regularity of these coefficients follows from the regularity assumption of $x^2g_+$. 

Since $Y(\partial M, [\hat{g}])>0$, $\mathrm{Spec}(-\Delta_+)\geq \frac{n^2}{4}$. For any $s\in \mathbb{C}$ with $\mathrm{Re}(s)>n/2$, the Poisson kernel $P(s)$ is well defined in the following sense: given any $f\in C^{\infty}(M)$, then $u=P(s)f$ is the unique solution satisfying the following equation: 
\begin{equation}\label{eq.6.2}
-\Delta_+ u-s(n-s)u=0, \quad  \textrm{in $X$}	
\end{equation}
and $u\sim x^{n-s}f $ as $ x\rightarrow 0$. In \cite{GZ}, it shows that the  $P(s)$ can be represented by the resolvent $R(s)=(-\Delta_+ u-s(n-s))^{-1}$. 


Fix $s>\frac{n}{2}$ and consider a Poinca\'{e}-Einstein metric    $g_+$   which is smoothly conformally compact and sufficiently close to the hyperbolic 
space in the sense 
$$
\| x^2g_+ -x^2g_{\mathbb{H}}\|_{C^{k,\alpha}(\overline{\mathbb{B}^{n+1}})}
\leq \epsilon. 
$$
for some $k+\alpha>\max\{4,2s-n\}$ and $\epsilon>0$ sufficiently small. 
Due to the work of \cite{MM}, \cite{Gu} and \cite{GQ},  in this case the Poisson kernel is analytic in a neighborhood of $[n-s, s]$.
By \cite{JS, GZ}, letting $r=\sqrt{x^2+y^2}\in [0,\delta_0)$ near $p=e_{n+1}$, then $P(s)$ at $p$ is
$$P(s)= C_{n,s} x^{s}r^{-2s}F$$
where 
$F$ is smooth in the blow-up space of $\overline{M}\times\partial M$. (See \cite{JS} for exact meaning of the blow-up.) Here we write it in local coordinates in the following way: 
$$
F=\begin{cases} 
\displaystyle F_1\left(r, \frac{x}{r},  \frac{y}{|y|}\right), 
\quad \textrm{for}\ x<\sqrt{3}|y|, 
\\
\displaystyle  F_2\left(r, \frac{y}{r}\right), 
\quad \textrm{for}\  |y|<\sqrt{3}x, 
\end{cases}
$$
for some 
 $F_1\in C^{\infty} ([0,\delta_0)\times [0,1/2) \times \mathbb{S}^{n-1}$
and $F_2\in C^{\infty}([0,\delta_0) \times \mathbb{B}^{n}(2).$ Moreover, 
$F_1(0,\cdot, \cdot )=1, F_2(0,\cdot)=1$. Since in  $X$, 
$$-\Delta_+P(n-s)-s(n-s)P(n-s)=0$$
using the asymptotical expansion of $g_+$ and $\Delta_+$ near $p$, we can obtain that 
$$F=1+O(r^2). $$
Define $\rho= C_{n,n-s}^{-\frac{1}{n-s}} P(n-s)^{\frac{1}{n-s}}$which is smooth in $\overline{X}\backslash\{p\}$. Then  $(\overline{X}\backslash\{p\}, g=\rho^2g_+)$ is a manifold with noncompact boundary $M\backslash\{p\}$ and  near $p$, 
$$
g=r^{-4}(1+O(r^2))\left(dx^2+dy^2+O(r^2)(dx,dy)\right). 
$$
Here $O(r^2)(dx,dy)$ means symmetric $2$-tensor with coefficients $O(r^2)$. 
Let 
$$
\tilde{x}=\frac{x}{x^2+y^2}, \quad \tilde{y}=\frac{y}{x^2+y^2},\quad  \tilde{r}=\sqrt{\tilde{x}^2+\tilde{y}^2}=\frac{1}{r}.
$$
Then for $\tilde{r}>\delta_0^{-1}$,  
$$
g=d\tilde{x}^2+d\tilde{y^2}+O(\tilde{r}^{-2}) (d\tilde{x}, d\tilde{y}). 
$$
Hence in $(\tilde{x}, \tilde{y})$ coordinates, 
$$
R_{ijkl}=O(\tilde{r}^{-4}). 
$$
Notice that  there exist constant $C>0$ such that 
$$
\frac{1}{C} \leq\frac{\mathrm{dist}_{\bar{g}}(\cdot, -e_{n+1})}{\tilde{r}}  \leq C. 
$$

\subsection{Stereographic projection from non-smooth CCE }
If the conformal compactification of $(X^{n+1}, g_+)$ has lower regularity or singular, then the existence of Poisson kernel is a problem, and hence we can not define an adapted defining function. 

 In \cite{BL}, Bahuaud-Lee constructed some Poincr\'{e}-Einstein metric $g_+$ in the ball $ \mathbb{B}^{n+1}$ which has only  $C^{1,1,}$ conformal compactification and its conformal infinity is a $C^{1,1}$ metric $\hat{g}$ on the sphere $ \mathbb{S}^n$. Fix   $p\in  \mathbb{S}^n$, $g_+$  has the following expansion: 
$$
g_+=x^{-2}\left(dx^2 + \hat{g}+O(r^2)(dx,dy) \right), \quad r=\sqrt{x^2+y^2}<\delta_0. 
$$
Here the $O(r^2)$ coefficients has $C^{1,1}$ regularity in $x,y$ coordinates. 

Let $\rho=\frac{x}{x^2+y^2}$ for $r<\frac{1}{2}\delta_0$. Then $\rho$ is smooth in $\overline{\mathbb{B}^{n+1}}\backslash\{p\}$ and $\rho$ is a regular boundary defining function for $r\geq \delta_0$. Define $g=\rho^2g_+$. 
For $r<\delta_0$, let 
$$ 
\tilde{x}=\frac{x}{x^2+y^2}, \quad  \tilde{y}=\frac{y}{x^2+y^2}, \quad  \tilde{r}=\sqrt{\tilde{x}^2+\tilde{y}^2}. $$
Then for $\tilde{r}>\delta_0$, we also have 
$$
g=d\tilde{x}^2+d\tilde{y^2}+O(\tilde{r}^{-2}) (d\tilde{x}, d\tilde{y}), 
$$
The $C^{1,1}$ regularity of $O(r^2)(dx,dy)$ implies that in $(\tilde{x}, \tilde{y})$ coordinates, 
$$
R_{ijkl}=O(\tilde{r}^{-4}), \quad as\  \tilde{r}\rightarrow \infty.
$$
Notice here $g$ has $C^{1,1}$ regularity  up to boundary too.

\subsection{Generalized stereographic projection from singular CCE}
If the conformal compactification of $(X^{n+1}, g_+)$ has a singular set, which is a submanifold on the boundary, then we can blow up the submanifold and obtain the generalized stereographic projection metric. 

  In \cite{AOS},  Alvarado-Ozuch-Santiago studied some smooth  Poincar\'{e}-Einstein metric  $g_+$ in the interior of $X^4$, whose conformal infinity $(M^3,[\hat{g}])$ has conic-edge singularity. More explicitly, 
$$
\begin{aligned}
g_+ =\frac{1}{(x-y)^2} &\left[ 
\frac{Q(y)}{1-x^2y^2}(d\psi-x^2 d\varphi)^2
+\frac{1-x^2y^2}{Q(y)}dy^2\right.
\\
& \left. 
\frac{P(x)}{1-x^2y^2}(d\varphi-y^2d\psi)^2+\frac{1-x^2y^2}{P(x)}dx^2
\right]
\end{aligned}
$$
where $\psi, \varphi$ are periodic type coordinates and 
$(x,y)\in \Omega\subset \mathbb{R}^2$ such that $P(x)>0, Q(y)>0$. 
Here 
$$
\begin{aligned}
P(x)&=b(x-\alpha_1)[(x-1+\alpha_2)^2+\alpha_3](x-\alpha_4)
\\
&=bx^4+cx^3+dx^2+ex+b+1	, 
\end{aligned}
$$
with $b=(-1+\alpha_1\alpha_2^2\alpha_4-\alpha_4-2\alpha_1\alpha_2\alpha_4+\alpha_1\alpha_3\alpha_4)^{-1}$ and 
$$
-Q(y)=P(y)+y^4-1=(b+1)y^4+cy^3+dy^2+ey+b. 
$$
By choosing the constant
$$
-1<\alpha_1<0, \quad \alpha_2,\alpha_3<0 \ \textrm{small}, \quad \alpha_2^2+\alpha_3=0, \quad \alpha_4>1, 
$$
we have
$$
\Omega=\{\alpha_1\leq x<y\leq 1\}. 
$$
By choosing the periods of $\psi, \varphi$ properly, we can make $g_+$ smooth  when $x\rightarrow \alpha_1$or $y\rightarrow 1$. And hence $(X^4, g_+)$ is smooth in the interior. Here  $\{x=y\}$ corresponding to the conformal infinity and $\rho=y-x$ is a boundary defining function. The boundary metric is
$$
\begin{aligned}
\hat{g}=\rho^2g_+|_{\alpha=\beta}= &  \ 
(1-x^4)\left(\frac{1}{P(x)}+\frac{1}{Q(x)}\right)dx^2+ 
\frac{Q(x)}{1-x^4}(d\psi-x^2 d\varphi)^2
	\\
	& +\frac{P(x)}{1-x^4}(d\varphi-x^2 d\psi)^2. 
\end{aligned}
$$
When $x=y\rightarrow 1$, by changing boundary coordinates, we have
$$
\hat{g}=dt^2+\alpha^2t^2d\theta_1^2+\beta^2d\theta_2, 
\quad \textrm{on $(t,\theta_1,\theta_2)\in [0,\delta )\times [0,2\pi ]\times  [0, 2\pi ]$}, 
$$
for some $\alpha>0,\beta>0$. Hence $\{t=0\}$ is the singular set $\Sigma=\{(x,y)=(1,1)\} $ on the boundary. See \cite[Theorem B and Section 4.3]{AOS}.

By directly calculation, the pointwise norm of the Riemannian tensor of $g_+$ is
$$
|Rm_{g_+}|_{g_+}^2 = 24+24(x-y)^6\left(
\frac{k_+^2}{(1+xy)^6}+\frac{k_-^2}{(1-xy)^6}
\right), 
$$
where $k_{\pm}$ are determined by $c=k_++k_-, e=k_+-k_-$. Moreover
 $$|W_{g_+}|_{g_+}=O\left(\frac{(y-x)^3}{(1-x^2y^2)^3}\right ). $$ 
Denote $\bar{g}=(y-x)^2g_+$. Then $\bar{g}$ is smooth everywhere except $(x,y)=(1,1)$. When $(x,y)\rightarrow (1,1)$, we have
$$|Rm_{\bar{g}}|_{\bar{g}}=O\left(\frac{1}{(1-x^2y^2)^2}\right ), \quad 
|W_{\bar{g}}|_{\bar{g}}=O\left(\frac{(y-x) }{(1-x^2y^2)^3}\right ). 
$$

Now considering a blow-up of the singular set  $\Sigma $. Let $\bar{x}=1-x, \bar{y}=1-y$. Then a collar neighborhood of $\Sigma $ in $(\overline{X}^4, \bar{g})$ is give by 
$$U=\{0\leq \bar{y}\leq \bar{x}<\delta \}.$$ 
Then in $U$, $1-x^2y^2\sim 2(\bar{x}+\bar{y})$, which is also equivalent to $\sqrt{\bar{x}^2+\bar{y}^2}$, $\bar{x}$ and $\mathrm{dist}_{\bar{g}}(\cdot, \Sigma)$. Define
$\tilde{g}=(1-x^2y^2)^{-4} \bar{g}$ and let $\tilde{r}=\mathrm{dist}_{\tilde{g}}(\cdot, p)$ for some fixed interior point $p$. Then $  (1-x^2y^2)=O(\tilde{r}^{-1})$. Moreover, 
$$|Rm_{\tilde{g}}|_{\tilde{g}}=O\left((1-x^2y^2)^2\right )=O(\tilde{r}^{-2}), 
$$
$$
|W_{\tilde{g}}|_{\tilde{g}}=O\left( (y-x)  (1-x^2y^2) \right )=O(\rho \tilde{r}^{-2}), 
$$
where $\rho=\frac{y-x}{1-x^2y^2}$ extends to a bounded boundary defining function for $(\overline{X}\backslash \Sigma, \tilde{g})$.

\appendix
\section{Proof of Lemma \ref{lemma:eqcurvature}}

\begin{lemma} 
For the adapted boundary compactification of Poincar\'{e}-Einstein manifold with parameter $s=\frac{n}{2}+\gamma$, we have:
$$
\begin{aligned}
	 \Delta A_{ij}
= & J_{ij}-\tfrac{n-3}{2}\rho^{-1}\rho^m\left(A_{mi,j}+A_{mj,i}\right)+(n-3)\rho^{-1}\langle \nabla \rho, \nabla A_{ij}\rangle
\\
& -2R_{imjk}A^{mk} 
+(n-3)(A^2)_{ij}
+|A|^2g_{ij}  
+2JA_{ij}.
\end{aligned}
$$
\end{lemma}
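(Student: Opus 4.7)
The plan is to derive the identity in two stages: first, to establish an abstract Bochner-type identity for the Schouten tensor valid on any $(n+1)$-dimensional Riemannian manifold, and then to simplify the residual Cotton-divergence term using the structural equations of the adapted compactification.

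For the first stage, I would start from the contracted second Bianchi identity $\nabla^m A_{mi} = \nabla_i J$ already recorded in Lemma \ref{lemma:eqcurvature}(2). Using the decomposition $\nabla^k A_{ij} = \nabla_i A^k{}_j + C^k{}_{ij}$, where $C_{kij}=\nabla_k A_{ij}-\nabla_i A_{kj}$ is the Cotton tensor, I would write $\Delta A_{ij} = \nabla_k\nabla^k A_{ij} = \nabla_k\nabla_i A^k{}_j + \nabla_k C^k{}_{ij}$. Commuting $\nabla_k\nabla_i$ by the Ricci identity produces the term $\nabla_i\nabla_k A^k{}_j = \nabla_i\nabla_j J = J_{ij}$ plus Riemann-$A$ and Ricci-$A$ contributions. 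Converting the Ricci-$A$ piece to Schouten-quadratic terms via $(n-1)A=\mathrm{Ric}-Jg$ should yield the general identity
\begin{equation*}
\Delta A_{ij} = J_{ij} - \nabla^m C_{mij} - 2R_{imjk}A^{mk} + (n-3)(A^2)_{ij} + |A|^2 g_{ij} + 2J A_{ij},
\end{equation*}
valid on any $(n+1)$-dimensional Riemannian manifold.

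In the second stage, I would show that for the adapted compactification of a Poincar\'e-Einstein manifold the Cotton divergence has the explicit form
\begin{equation*}
-\nabla^m C_{mij} = -\tfrac{n-3}{2}\rho^{-1}\rho^m(A_{mi,j}+A_{mj,i}) + (n-3)\rho^{-1}\langle\nabla\rho,\nabla A_{ij}\rangle.
\end{equation*}
The crucial input is Lemma \ref{lemma:eqs}(4): $E=-(n-1)\rho^{-1}\thess\rho$. Combined with Lemma \ref{lemma:eqs}(1)--(2), which fixes $\Delta\rho$ and $1-|\nabla\rho|^2$ in terms of $R$, this expresses $A_{ij}$ as $-\rho^{-1}\nabla_i\nabla_j\rho$ up to a scalar multiple of $g_{ij}$ involving $\rho$ and $R$. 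Differentiating this constitutive relation and antisymmetrizing (as appears in the definition of $C$) causes the prefactor $\rho^{-1}$ to contribute, via the product rule, precisely the $\rho^{-1}\rho^m$-weighted terms in the claim. The dimensional factor $(n-3)$ emerges from the Weyl-Cotton relation $(n-2)C_{ijk}=\nabla^l W_{lkij}$ valid on an $(n+1)$-dimensional manifold, which governs how the Cotton divergence transforms.

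The principal obstacle will be the careful bookkeeping in the second stage: one must simultaneously manage trace and trace-free parts, and verify that the many residual Schouten-quadratic and $|\nabla\rho|^2$-dependent remainders cancel or recombine exactly as stated. A useful consistency check at each step is the case $n=3$: both $\rho$-derivative terms vanish and the $(n-3)(A^2)_{ij}$ contribution drops, so the general identity should reduce to the Bach-flat form already recorded in Lemma \ref{lemma:eqcurvature}(3).
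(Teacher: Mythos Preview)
Your two–stage strategy is sound and genuinely different from the paper's proof. The paper does not pass through the Cotton tensor at all: it starts from the constitutive relation $A_{ij}=-\rho^{-1}\rho_{ij}-\tfrac{1}{2s-n-1}Jg_{ij}$ and brute-force computes $\Delta(\rho^{-1}\rho_{ij})$ and $\Delta J\,g_{ij}$, commuting fourth derivatives $\rho_{ijkl}$ via the Ricci identity and then reorganizing a long list of linear and quadratic curvature terms. Your route---a general Riemannian Bochner formula for $\Delta A$ plus a conformal evaluation of the Cotton divergence---is more conceptual and isolates where the Poincar\'e--Einstein hypothesis actually enters.

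Two cautions on the execution. First, the sign and symmetry of the Cotton term need care: your own derivation gives $+\nabla^k C_{kij}$, not $-\nabla^m C_{mij}$, and $\nabla^k C_{kij}$ is not symmetric in $i,j$, so the general stage-1 identity should carry the symmetrization $\tfrac12\nabla^k(C_{kij}+C_{kji})$. Once you redo stage 1 carefully the quadratic block will differ from what you wrote by a $W_{imjk}A^{mk}$ contribution. Second, and correspondingly, the stage-2 claim is not purely first-order: computing the Cotton divergence via the conformal formula $C^g_{kij}=\rho^{-1}W_{kijl}\rho^l$ (valid because $C^{g_+}=0$) and then differentiating produces, besides the $\rho^{-1}\rho^m\nabla A$ terms, quadratic pieces $W_{kijl}\rho^{-1}\rho^{kl}\sim W\!*\!A$ and $\rho^{-2}W(\nabla\rho,\cdot,\cdot,\nabla\rho)$. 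These must cancel against the extra $W\!*\!A$ in the corrected stage-1 block. In particular your $n=3$ check is slightly off: Bach-flatness gives $\nabla^k C_{kij}=-W_{ikjl}A^{kl}$, not zero, so the Cotton divergence does not vanish there---it becomes a quadratic term that is absorbed on the other side. None of this breaks the plan; it just means the split between your two stages is less clean than the displayed formulas suggest, which is exactly the bookkeeping you already anticipated.
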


\begin{proof}
Notice that the Schouten tensor of $g$ and its trace are
$$
	A_{ij}=\tfrac{1}{n-1}\left(R_{ij}-Jg_{ij}\right), \quad J=\tfrac{1}{2n} R=\mathrm{tr}_gA. 
$$
Therefore, 
$$
	A_{ij}=-\rho^{-1}\rho_{ij}-\tfrac{1}{2s-n-1} J g_{ij}. 
$$
To calculate $\Delta A_{ij}$:
$$
\Delta A_{ij}=-\Delta (\rho^{-1}\rho_{ij})-\tfrac{1}{2s-n-1}( \Delta J) g_{ij} =I+II \cdot  g_{ij} . 
$$
Here by (1), 
$$
	\Delta J=-(n+3-2s)\rho^{-1}\langle \nabla \rho,\nabla J\rangle  
	-(2s-n-1)|A|^2+ \tfrac{n+1}{2s-n-1}J^2. 
$$
Hence 
$$
II=\tfrac{n+3-2s}{2s-n-1}\rho^{-1}\langle \nabla \rho,\nabla J\rangle  
+|A|^2-\tfrac{n+1}{(2s-n-1)^2}J^2 . 
$$
For $\Delta (\rho^{-1}\rho_{ij})$, direct computation shows that
$$
\begin{aligned}
(\rho^{-1}\rho_{ij})_{,k} 
& =\rho^{-1}\rho_{ijk}-\rho^{-2}\rho_{ij}\rho_k, 	
\\
(\rho^{-1}\rho_{ij})_{,k}^{\ \ k}
&= \rho^{-1}\rho_{ijk}^{\ \ \ k} -2\rho^{-2}\rho_{ijk}\rho^k
+2\rho^{-3} \rho_{ij}|\nabla \rho|^2-\rho^{-2}(\Delta \rho) \rho_{ij}. 
\end{aligned}
$$
Because
$$
\begin{aligned}
A_{ij,k} 
& =-\rho^{-1}\rho_{ijk}+\rho^{-2}\rho_{ij}\rho_k
-\tfrac{1}{2s-n-1} J_k g_{ij}, 
\\
\rho^{-1}\langle \nabla \rho,\nabla A_{ij}\rangle 
& =-\rho^{-2}\rho_{ijk}\rho^k+\rho^{-3}\rho_{ij}|\nabla \rho|^2
-\tfrac{1}{2s-n-1} \rho^{-1}\langle \nabla \rho, \nabla J\rangle g_{ij}	, 
\\
\Delta\rho
& =-\tfrac{2s}{2s-n-1} \rho J. 
\end{aligned}
$$
We have
$$
\begin{aligned}
\Delta (\rho^{-1}\rho_{ij})   & =  \rho^{-1}\rho_{ijk}^{\ \ \ k}
+2\left(\rho^{-1}\langle \nabla \rho,\nabla A_{ij}\rangle
+\tfrac{1}{2s-n-1} \rho^{-1}\langle \nabla \rho, \nabla J\rangle g_{ij}\right)	
\\
&\quad 
+ \tfrac{2s}{2s-n-1} \rho^{-1}\rho_{ij}J 
\\
 &= \rho^{-1}\rho_{ijk}^{\ \ \ k}
+2\left(\rho^{-1}\langle \nabla \rho,\nabla A_{ij}\rangle
+\tfrac{1}{2s-n-1} \rho^{-1}\langle \nabla \rho, \nabla J\rangle g_{ij}\right)	\\
&\quad 
 -\tfrac{2s}{2s-n-1}J A_{ij} -\tfrac{2s}{(2s-n-1)^2}J^2 g_{ij}. 
\end{aligned}
$$
So we obtain
$$
\begin{aligned}
 I=& -\rho^{-1}\rho_{ijk}^{\ \ \ k}
-2\left(\rho^{-1}\langle \nabla \rho,\nabla A_{ij}\rangle
+\tfrac{1}{2s-n-1} \rho^{-1}\langle \nabla \rho, \nabla J\rangle g_{ij}\right)	\\
& +\tfrac{2s}{2s-n-1}J A_{ij} +\tfrac{2s}{(2s-n-1)^2}J^2 g_{ij}, 
\end{aligned}
$$
and
$$
\begin{aligned}
\Delta A_{ij}=& - \rho^{-1}\rho_{ijk}^{\ \ \ k} 
-2 \rho^{-1}\langle \nabla \rho,\nabla A_{ij}\rangle
- \rho^{-1}\langle \nabla \rho, \nabla J\rangle g_{ij}
\\
& +\frac{2s}{2s-n-1}J A_{ij}+|A|^2g_{ij}+\frac{1}{2s-n-1}J^2 g_{ij}
\\
=& - \rho^{-1}\rho_{ijk}^{\ \ \ k}+III_1 +III_2. 
\end{aligned}
$$
Here $III_1$ denotes the linear curvature term and $III_2$ denotes the quadratic curvature term. 
Now, using the convention $g_{im}R^m_{\ jkl}=R_{ijkl}$, we have
$$
\begin{aligned}
\rho_{ijkl}=&\ \left(\rho_{ikj}-R_{imjk}\rho^m\right)_{,l}	
\\
=&\ \rho_{kijl}    - R_{imjk}\rho^m_{\ \ l}   - R_{imjk,l}\rho^m
\\
=&\  \rho_{kilj} - R_{kmjl}\rho_i^{\  m}-R_{imjl}\rho_k^{\ m}  
- R_{imjk}\rho^m_{\ \ l}   - R_{imjk,l}\rho^m
\\
=&\ \left(\rho_{kli}-R_{kmil}\rho^m
\right)_{,j}
\\
&\ 
- R_{kmjl}\rho_i^{\  m}-R_{imjl}\rho_k^{\ m}  
- R_{imjk}\rho^m_{\ \ l}   - R_{imjk,l}\rho^m
\\
=&\ \rho_{klij} -R_{kmil}\rho^m_{\ j} -R_{kmil,j}\rho^m
\\
&\ 
- R_{kmjl}\rho_i^{\  m}-R_{imjl}\rho_k^{\ m}  
- R_{imjk}\rho^m_{\ \ l}   - R_{imjk,l}\rho^m, 
\end{aligned}
$$
and
$$
\begin{aligned}
\rho^{-1}\rho_{ijk}^{\ \ \ k}
&= \rho^{-1}\left( \rho_{k\ ij}^{\ k}
+R_{mi}\rho_{j}^{\ m} +R_{mj}\rho_{i}^{\ m}
-2R_{imjk}\rho^{mk} \right.
\\
&\quad 
\left.+R_{mi,j}\rho^m-R_{imjk,}^{\ \ \ \ \ \ k}\rho^m
\right)
=A+B+C+D.	
\end{aligned}
$$
Here
$$
\begin{aligned}
	A=&\ \rho^{-1} \rho_{k\ ij}^{\ k}= \rho^{-1}(\Delta\rho)_{,ij}
=-\tfrac{2s}{2s-n-1} \rho^{-1}(\rho J)_{,ij}
\\
=&\ -\tfrac{2s}{2s-n-1}\left[J_{ij}
+\rho^{-1} \left(\rho_i J_j+\rho_j J_i \right)
+\rho^{-1}\rho_{ij}J
\right]
\\
=&\  -\tfrac{2s}{2s-n-1}J_{ij}
-\tfrac{2s}{2s-n-1}\rho^{-1} \left(\rho_i J_j+\rho_j J_i \right)
\\
&\ 
+\tfrac{2s}{2s-n-1}JA_{ij}+\tfrac{2s}{(2s-n-1)^2}J^2 g_{ij}
\\
=& A_1+A_2, 
\end{aligned}
$$
where  $A_1$ denotes the linear curvature term and $A_2$ denotes the quadratic curvature term; and 
$$
\begin{aligned}
	B=&\ \rho^{-1} \left(R_{mi}\rho_{j}^{\ m} +R_{mj}\rho_{i}^{\ m}\right)
	\\
=&\ -2(n-1)(A^2)_{ij}-\tfrac{4s-4}{2s-n-1}J A_{ij}
-\tfrac{2}{2s-n-1}J^2g_{ij}
=B_2,	
\\
C=&\ -2\rho^{-1}R_{imjk}\rho^{mk}
= 2R_{imjk}A^{mk}+\tfrac{2}{2s-n-1}JR_{ij}	
\\
=&\  2R_{imjk}A^{mk}+\tfrac{2(n-1)}{2s-n-1}JA_{ij}	 +\tfrac{2}{2s-n-1}J^2g_{ij}=C_2,
\end{aligned}
$$
where $B_2,C_2$ means $B, C$ are both quadratic curvature term. 
By the second Bianchi identity, we have
$$
R_{jkim,l}+R_{jkli,m}+R_{jkml,i}=0, \quad \Rightarrow \quad
R_{jkim,}^{\ \ \ \ \ \ k}-R_{ji,m}+R_{jm,i}=0. 
$$
Hence
$$
R_{mi,j}-R_{imjk,}^{\ \ \ \ \ \  k}=R_{mi,j}+R_{mj,i}-R_{ij,m}, 
$$
and
$$
\begin{aligned}
D=&\ \rho^{-1}\rho^m\left(R_{mi,j}-R_{imjk,}^{\ \ \ \ \ \  k}\right) \\
=& 
(n-1)\rho^{-1}\rho^m\left(A_{mi,j}+A_{mj,i}\right)-(n-1)\rho^{-1}\langle \nabla \rho, \nabla A_{ij}\rangle	
\\
& +\rho^{-1}\left(\rho_i J_j +\rho_j J_i\right)-\rho^{-1}\langle \nabla \rho, \nabla J\rangle g_{ij}=D_1,
\end{aligned}
$$
where $D_1$ means $D$ is only linear curvature term. 
Notice that
$$
\begin{aligned}
A_1+D_1=& 
-\tfrac{2s}{2s-n-1}J_{ij}
-\tfrac{2s}{2s-n-1}\rho^{-1} \left(\rho_i J_j+\rho_j J_i \right)
\\
& + (n-1)\rho^{-1}\rho^m\left(A_{mi,j}+A_{mj,i}\right)-(n-1)\rho^{-1}\langle \nabla \rho, \nabla A_{ij}\rangle	
\\
& +\rho^{-1}\left(\rho_i J_j +\rho_j J_i\right)-\rho^{-1}\langle \nabla \rho, \nabla J\rangle g_{ij} 
\\
=&  -\tfrac{2s}{2s-n-1}J_{ij}   -\tfrac{n+1}{2s-n-1}\rho^{-1} \left(\rho_i J_j+\rho_j J_i \right)-\rho^{-1}\langle \nabla \rho, \nabla J\rangle g_{ij}
\\
&+ (n-1)\rho^{-1}\rho^m\left(A_{mi,j}+A_{mj,i}\right)-(n-1)\rho^{-1}\langle \nabla \rho, \nabla A_{ij}\rangle .	
\end{aligned}
$$
Therefore, the linear curvature term in $-\Delta A_{ij}$ is given by
$$
\begin{aligned}
& \ A_1+D_1-III_1
\\
=&\  -\tfrac{2s}{2s-n-1}J_{ij}
-\tfrac{n+1}{2s-n-1}\rho^{-1} \left(\rho_i J_j+\rho_j J_i \right)
\\
&\ 
+ (n-1)\rho^{-1}\rho^m\left(A_{mi,j}+A_{mj,i}\right)
-(n-3)\rho^{-1}\langle \nabla \rho, \nabla A_{ij}\rangle
\\
=&\  -J_{ij}-\tfrac{n+1}{2s-n-1}\left[J_{ij}+\rho^{-1} \left(\rho_i J_j+\rho_j J_i \right)\right]
\\
&\  + (n-1)\rho^{-1}\rho^m\left(A_{mi,j}+A_{mj,i}\right)
-(n-3)\rho^{-1}\langle \nabla \rho, \nabla A_{ij}\rangle , 
\end{aligned}
$$
and the quadratic form in $-\Delta A_{ij}$ is given by
$$
\begin{aligned}
&\ A_2+B_2+C_2-III_2
\\
=&\  2R_{imjk}A^{mk}-2(n-1)(A^2)_{ij}-|A|^2g_{ij}  -2JA_{ij}
+\tfrac{n+1}{(2s-n-1)^2}J^2g_{ij} . 
\end{aligned}
$$
At last, we can simplify the linear term by the following identities
$$
|\nabla\rho|^2_{ij}=|\nabla \rho|^2_{ji} \quad \Rightarrow \quad \rho_{kij}\rho^k=\rho_{kji}\rho^k. 
$$
Hence
$$
\begin{aligned}
J_i=&\ \tfrac{2s-n-1}{2}\left[-2\rho^{-2}\rho_{ki}\rho^k-2\rho^{-3}(1-|\nabla\rho|^2)\rho_i\right], 
\\
J_{ij}=& \tfrac{2s-n-1}{2}\left[-2\rho^{-2}\rho_{kij}\rho^k-2\rho^{-2}\rho_{ki}\rho^{k}_{\ j} -2\rho^{-3}(1-|\nabla\rho|^2)\rho_{ij}\right.
\\
&\quad\quad\quad \quad\quad + \left. 4\rho^{-3}\left(\rho_{ki}\rho^k\rho_j+\rho_{kj}\rho^k\rho_i\right)+6\rho^{-4}(1-|\nabla\rho|^2)\rho_i\rho_j
\right], 
\end{aligned}
$$
and
$$
\begin{aligned}
&\ \rho^{-1}\left(\rho_i J_j+\rho_j J_i \right)\\
=&\  \tfrac{2s-n-1}{2}\left[-2\rho^{-3}\left(\rho_{ki}\rho^k\rho_j+\rho_{kj}\rho^k\rho_i\right)-4\rho^{-4}(1-|\nabla\rho|^2)\rho_i\rho_j\right]
\\
=&\  -(2s-n-1)\left[ \rho^{-3}\left(\rho_{ki}\rho^k\rho_j+\rho_{kj}\rho^k\rho_i\right)+2\rho^{-4}(1-|\nabla\rho|^2)\rho_i\rho_j\right], 
\\
&\ \rho^{-1}\rho^k\left(A_{ki,j}+A_{kj,i}\right)\\
= &\ 
-2\rho^{-2}\rho_{kij}\rho^k+\rho^{-3}\left(\rho_{ki}\rho^k\rho_j+\rho_{kj}\rho^k\rho_i\right)
-\tfrac{1}{2s-n-1}\rho^{-1}\left(\rho_i J_j+\rho_j J_i \right)
\\
=&\  -2\rho^{-2}\rho_{kij}\rho^k 
+2\rho^{-3}\left(\rho_{ki}\rho^k\rho_j+\rho_{kj}\rho^k\rho_i\right)
+2\rho^{-4}(1-|\nabla\rho|^2)\rho_i\rho_j. 
\end{aligned}
$$
These together implies that
$$
\begin{aligned}
&\ J_{ij}+\rho^{-1}\left(\rho_i J_j+\rho_j J_i \right)\\
=&\  \tfrac{2s-n-1}{2}\left[-2\rho^{-2}\rho_{kij}\rho^k-2\rho^{-2}\rho_{ki}\rho^{k}_{\ j} -2\rho^{-3}(1-|\nabla\rho|^2)\rho_{ij}\right.
\\
&\  
+ \left. 2\rho^{-3}\left(\rho_{ki}\rho^k\rho_j+\rho_{kj}\rho^k\rho_i\right)+2\rho^{-4}(1-|\nabla\rho|^2)\rho_i\rho_j
\right]
\\
=&\ \tfrac{2s-n-1}{2}\rho^{-1}\rho^k\left(A_{ki,j}+A_{kj,i}\right)
\\
&\  -(2s-n-1)\left(\rho^{-2}\rho_{ki}\rho^{k}_{\ j}+\rho^{-3}(1-|\nabla\rho|^2)\rho_{ij}  \right). 
\end{aligned}
$$
Therefore
$$
\begin{aligned}
 &\ -\tfrac{n+1}{2s-n-1}\left[J_{ij}+\rho^{-1} \left(\rho_i J_j+\rho_j J_i \right)\right]
+ (n-1)\rho^{-1}\rho^k\left(A_{ki,j}+A_{kj,i}\right)
\\
= & -\tfrac{n+1}{2}\rho^{-1}\rho^k\left(A_{ki,j}+A_{kj,i}\right) +(n+1)\left(\rho^{-2}\rho_{ki}\rho^{k}_{\ j}+\rho^{-3}(1-|\nabla\rho|^2)\rho_{ij}  \right)
\\
&\ 
+ (n-1)\rho^{-1}\rho^k\left(A_{ki,j}+A_{kj,i}\right)
\\
= &\ 
\tfrac{n-3}{2}\rho^{-1}\rho^k\left(A_{ki,j}+A_{kj,i}\right)
+(n+1)\left(\rho^{-2}\rho_{ki}\rho^{k}_{\ j}+\rho^{-3}(1-|\nabla\rho|^2)\rho_{ij}  \right).
\end{aligned}
$$
The new quadratic term is given by
$$
\begin{aligned}
 \rho^{-2}\rho_{ki}\rho^{k}_{\ j}=&\ (A^2)_{ij}+\frac{2}{2s-n-1}JA_{ij}+\frac{1}{(2s-n-1)^2}J^2g_{ij}, 	
 \\
 \rho^{-3}(1-|\nabla\rho|^2)\rho_{ij}=&\ -\frac{2}{2s-n-1}JA_{ij}-\frac{2}{(2s-n-1)^2}J^2g_{ij}. 
\end{aligned}
$$
Hence
$$
(n+1)\left(\rho^{-2}\rho_{ki}\rho^{k}_{\ j}+\rho^{-3}(1-|\nabla\rho|^2)\rho_{ij}  \right)
=(n+1)(A^2)_{ij}- \tfrac{n+1}{(2s-n-1)^2}J^2g_{ij}. 
$$
We finish the proof. 
\end{proof}


\end{document}